\documentclass[11pt]{article}
\usepackage[normalem]{ulem}

\usepackage[title, titletoc]{appendix}

\usepackage[margin=1in]{geometry}
\usepackage[utf8]{inputenc}
\usepackage{enumerate}
\usepackage{amsfonts}
\usepackage{amsmath}
\usepackage{amssymb}
\usepackage{mathrsfs}
\usepackage{bbm}
\usepackage{amsthm}
\usepackage{xcolor}
\usepackage{multirow}
\usepackage{indentfirst}
\usepackage{comment}

\theoremstyle{definition}

\newtheorem{theorem}{Theorem}[section]
\newtheorem{example}[theorem]{Example}
\newtheorem{definition}[theorem]{Definition}
\newtheorem{corollary}[theorem]{Corollary}
\newtheorem{proposition}[theorem]{Proposition}
\newtheorem{lemma}[theorem]{Lemma}
\newtheorem{remark}[theorem]{Remark}

\numberwithin{equation}{section}

\newcounter{cnstcnt}
\newcommand{\newconstant}{
    \refstepcounter{cnstcnt}
    \ensuremath{c_{\thecnstcnt}}}
\newcommand{\oldconstant}[1]{\ensuremath{c_{\ref{#1}}}}

\newcounter{integer}

\usepackage{hyperref}
\hypersetup{
    colorlinks = true,
    linkcolor = blue,
    filecolor = blue,      
    urlcolor = blue,
    citecolor = blue
}

\usepackage{enumerate}
\usepackage{enumitem}

\makeatletter
\def\@tocline#1#2#3#4#5#6#7{\relax
  \ifnum #1>\c@tocdepth 
  \else
    \par \addpenalty\@secpenalty\addvspace{#2}%
    \begingroup \hyphenpenalty\@M
    \@ifempty{#4}{%
      \@tempdima\csname r@tocindent\number#1\endcsname\relax
    }{%
      \@tempdima#4\relax
    }%
    \parindent\z@ \leftskip#3\relax \advance\leftskip\@tempdima\relax
    \rightskip\@pnumwidth plus4em \parfillskip-\@pnumwidth
    #5\leavevmode\hskip-\@tempdima
      \ifcase #1
       \or\or \hskip 1em \or \hskip 2em \else \hskip 3em \fi%
      #6\nobreak\relax
    \hfill\hbox to\@pnumwidth{\@tocpagenum{#7}}\par
    \nobreak
    \endgroup
  \fi}
\makeatother

\allowdisplaybreaks

\title{Universality of Annealed Complexity \\
in the Spherical Pure $p$-Spin Model}
\author{Wei-Kuo Chen\thanks{School of Mathematics, University of Minnesota, Email: wkchen@umn.edu.} \and  Te-Lun Lu \thanks{School of Mathematics, University of Minnesota, Email: lu000646@umn.edu.}  \and Arnab Sen \thanks{School of Mathematics, University of Minnesota, Email: arnab@umn.edu.}} 
\date{\today}

\begin{document}

\maketitle

\begin{abstract}
    In an influential work \cite{auffinger2013random}, Auffinger, Ben Arous, and {\v{C}}ern{\`y} investigated the limiting annealed complexity in the spherical pure $p$-spin model associated to the Gaussian disorder. Their results, in particular, included an explicit expression for the total number of critical points of the Hamiltonian below any given energy level. In this paper, we consider the same anneal complexity associated to a disorder interpolating between an arbitrary random variable and its Gaussian counterpart. Assuming strict subgaussianity of the former, our result shows that the annealed complexity shares the same limit as in the Gaussian case.
\end{abstract}


\section{Introduction and Main Results}

\noindent The study of the complexity in the spin glass theory is concerned about the total numbers of critical or saddle points for the disordered Hamiltonians. By analyzing their asymptotic behavior, they help to understand the geometry of the high-dimensional energy landscapes. Earlier results were pioneered in the beautiful work of Auffinger-Ben Arous-{\v{C}}ern{\`y} \cite{auffinger2013random} in the sphereical pure $p$-spin model, whose Hamilonian is defined as 
\[H_N^G(\sigma) = \dfrac{1}{N^{(p - 1)/2}}\sum_{I \in [N]^p} G_I \sigma_I\]
for $\sigma\in \sqrt{N}\mathbb{S}^{N - 1} = \{\sigma \in \mathbb{R}^N \colon \|\sigma\|_2^2 = N\},$
where $\sigma_I := \prod_{\alpha = 1}^p \sigma_{i_\alpha}$ for $I=(i_1,\ldots,i_p).$ 
Denote by $\nabla_S$ and $\nabla_S^2$ the spherical gradient and Hessian, respectively. Let
\[\text{Crt}_G((-\infty,u)) := \big\{\sigma\in \sqrt{N}\mathbb{S}^{N-1} \colon \nabla_SH_N^G(\sigma) = 0, H_N^G(\sigma) < Nu\big\}.\]
be the collection of all
critical points below the energy level $u\in \mathbb{R}$.
In \cite[Theorem 2.8]{auffinger2013random}, it was established
an explicit formula for the logarithmic asymptotics of the expected number of critical point, i.e.,
\begin{equation}\label{Equation:ABC_Main} \lim_{N \to \infty} \dfrac{1}{N}\log \mathbb{E}\big[\text{Crt}_G((-\infty, u))\big] = \begin{cases} \frac{1}{2}\log(p - 1) + \frac{1}{2}+ \Omega\big(\frac{2u}{E_\infty}) - \frac{u^2}{2}, & u \leq -E_\infty, \\ \frac{1}{2}\log(p - 1) - \frac{p - 2}{4(p - 1)}u^2, & -E_\infty \leq u \leq 0, \\ \frac{1}{2}\log(p - 1), & u \geq 0, \end{cases} \end{equation}
where
\begin{equation}\label{Equation:Energy}
    E_\infty = 2\sqrt{\frac{p - 1}{p}},\qquad
    \mu_{\mathrm{sc}}(d\lambda) = \frac{\sqrt{4-\lambda^2}}{2\pi}\mathbbm{1}_{\{|\lambda|\leq 2\}}\,d\lambda
\end{equation}
and $\Omega(x)$ is the log-potential of semicircular law given by
\begin{equation}\label{Equation:SC_Potential} 
    \Omega(x) = \int_{\mathbb{R}} \log|\lambda - x|\,\mu_{\text{sc}}(d\lambda) = \begin{cases}
        \frac{x^2}{4} - \frac{1}{2} - \frac{|x|}{4}\sqrt{x^2 - 4} + \log\Big(\frac{|x| + \sqrt{x^2 - 4}}{2}\Big), & |x| \geq 2, \\
        \frac{x^2}{4} - \frac{1}{2}, & |x| \leq 2.
    \end{cases} 
\end{equation}
In addition to this, the annealed complexity of saddle points was obtained in the same work \cite{auffinger2013random} and later generalized to the mixed $p$-spin model by Auffinger-Ben Arous \cite{Auffinger13}. Based on these, the study of complexity has received growing attention and many results have been made available. First, the second moment calculation of $\mbox{Crt}_G((-\infty,u))$ was carried out in the pure $p$-spin model by Subag \cite{subag2017complexity}, which concluded that the quenched and annealed complexities of the critical points indeed match with each other. More sophisticated formulas for the annealed complexity were also obtained in some Gaussian fields with isotropic increments by Auffinger-Zeng \cite{auffingerZeng2023complexity} as well as with non-rotationally invariant potential by Chen-Lu-Sen \cite{chen2026complexity}.

While the aforementioned results heavily rely on the Gaussianity of the Hamiltonians, it is a natural question to ask if the annealed complexity \eqref{Equation:ABC_Main} will remain the same or not when $G_I$'s are not Gaussian. Let $\{J_I\}_{I \in [N]^p, N \in \mathbb{N}}$ be a collection of independent random variables independent of $G_I$'s. By switching $G$ with $J$, let $H_N^J$ be defined in the same manner as $H_N^G$. 
A reasonable setting to investigate the universality of the annealed complexity is to consider the pure $p$-spin Hamiltonian associated to the interpolated disorder between $G_I$ and $J_I$,
\begin{equation}\label{Equation:Hamiltonian_Model}
    H_t(\sigma) :=  \dfrac{1}{N^{(p - 1)/2}}\sum_{I \in [N]^p} \big(\sqrt{1 - t}G_I + \sqrt{t}J_I\big) \sigma_I
\end{equation}
for $0\leq t\leq 1$ and investigate the corresponding number of critical points,
\begin{equation}\label{Equation:Complexity}
    \text{Crt}_t(B) := \big|\big\{\sigma \in \sqrt{N}\mathbb{S}^{N - 1} \,\big\vert\, H_t(\sigma) \in NB, \nabla_SH_t(\sigma) = 0\}\big|
\end{equation}
for all Borel set $B \subseteq \mathbb{R}$. In this paper we show that the annealed complexity does exhibit universality for all $0<t<1$ if $J_I$'s are strictly subgaussian. Here we recall that a random variable $X$ (of unit variance)  is $\kappa_{\text{SG}}^2$-subgaussian if
\begin{equation}\label{Equation:SG}
    \mathbb{E}\big[e^{\lambda X}\big] \leq e^{\kappa_{\text{SG}}^2\lambda^2/2}, \quad\forall\lambda \in \mathbb{R}. \tag{SG}
\end{equation}
It is strict subgaussian if
\begin{equation}\label{Equation:SSG}
    \kappa_{\text{SG}} = 1. \tag{SSG}
\end{equation}

The following is our main result.

\begin{theorem}\label{Theorem:Universality_Main}
    Let $p \geq 3$. Let $\{J_I\}_{I \in [N]^p, N \in \mathbb{N}}$ be independent with zero mean and unit variance. If they satisfy \eqref{Equation:SSG}, then for any $0 \leq t< 1$.
    \begin{equation}\label{universality:eq1}
        \lim_{N \to \infty} \dfrac{1}{N}\log \mathbb{E}\big[\text{Crt}_t((-\infty, u))\big] = \lim_{N \to \infty} \dfrac{1}{N}\log \mathbb{E}\big[\text{Crt}_0((-\infty, u))\big].
    \end{equation}
\end{theorem}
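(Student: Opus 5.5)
Our plan is to use the Kac--Rice formula together with the rotational structure of the Gaussian part, and to control the non-Gaussian part via strict subgaussianity. Write $g_I:=\sqrt{1-t}G_I+\sqrt{t}J_I$. The lower bound is the easier half. Since $t<1$, the Gaussian component $\sqrt{1-t}G$ is nondegenerate, so conditionally on $J$ the field $H_t$ is Gaussian with a random mean $\sqrt t H_N^J$. Hence the Kac--Rice formula applies conditionally on $J$. One could attempt Jensen's inequality in $J$, but it is cleaner to compare generating quantities directly, as follows.

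\textbf{Step 1 (Kac--Rice and reduction to an exponential moment).} By the Kac--Rice formula, $\mathbb{E}[\text{Crt}_t((-\infty,u))]$ equals the integral over the sphere of
\[
\mathbb{E}\big[|\det\nabla_S^2H_t|\,\mathbbm{1}\{H_t<Nu\}\,\big|\,\nabla_SH_t=0\big]\,\varphi_{\nabla_SH_t}(0).
\]
Conditionally on $J$, the triple $(H_t,\nabla_SH_t,\nabla_S^2H_t)(\sigma)$ is Gaussian. Its covariance is $(1-t)$ times the covariance computed in \cite{auffinger2013random}, namely a GOE Hessian shifted by a multiple of the energy, with gradient independent of both. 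Its mean is $\sqrt t$ times the corresponding $J$-quantities. We would write the density of the gradient at $0$ as
\[
(2\pi(1-t)p)^{-(N-1)/2}\exp\!\Big(-\frac{t\,\|\nabla_SH_N^J(\sigma)\|^2}{2(1-t)p}\Big),
\]
and similarly express the energy and Hessian as shifted Gaussians. After this, the expectation over $J$ becomes an expectation of an explicit functional of the random linear statistics $X=(H_N^J(\sigma)/N,\ \nabla_SH_N^J(\sigma),\ \nabla_S^2H_N^J(\sigma))$.

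\textbf{Step 2 (upper bound via SSG).} We need to bound expressions of the form $\mathbb{E}_J[F(X)]$ by the same expression with $J$ replaced by Gaussian, up to $e^{o(N)}$. The key tool is that for SSG variables and any deterministic coefficients $a_I$, one has $\mathbb{E}e^{\sum a_IJ_I}\le e^{\sum a_I^2/2}=\mathbb{E}e^{\sum a_IG_I}$. Since $F$ is a Gaussian integral in the remaining $G$-variables, we can represent $F(X)$, and hence the whole quantity, as a mixture of exponentials that are linear in $J$. For the Gaussian density factor we would use the Hubbard--Stratonovich identity
\[
e^{-\|y\|^2/2c}=\mathbb{E}_\xi e^{i\langle\xi,y\rangle/\sqrt c},
\]
but this gives complex exponents, where SSG does not directly apply. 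We would therefore instead bound the determinant via $|\det M|\le \prod(|\lambda_i|)$ and split on the event that the operator norm of $\nabla_S^2H_N^J$ is at most $K\sqrt N$; SSG gives concentration for the complement at an arbitrarily large exponential rate. On the good event, the determinant term is a bounded continuous function of the empirical spectral measure. By an LDP-type comparison, together with $\nabla_SH_N^J$ having independent-like SSG coordinates, its contribution is dominated by the Gaussian one. The mean shift only enlarges the tail of the energy in the direction $u$. Because the SSG bound on the moment generating function yields a rate function at least the Gaussian one for the linear statistic $H_N^J(\sigma)/N$, the energy tail probabilities are no larger exponentially.

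\textbf{Step 3 (lower bound).} For the lower bound, restrict $J$ to a typical event of probability $\ge 1/2$ on which $\|\nabla_SH_N^J\|^2\le CN$, $|H_N^J|\le C\sqrt N$, and the Hessian spectrum is close to the semicircle (universality of the empirical spectral measure for Wigner-type tensor contractions). On this event the conditional Kac--Rice integrand matches the pure-Gaussian one, with variance rescaled so that the total variance is 1, up to $e^{o(N)}$. However, with $\|\nabla_S H_N^J\|^2$ of order $N$, the density factor costs $e^{-cN}$. We must therefore tilt: choose the typical event for $J$ conditioned so that the total field has matching statistics, i.e.\ compare against $\mathbb{E}$ over Gaussian $J$ restricted to the same event, using the fact that both have the same covariance. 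The main obstacle is precisely this: the factor $\varphi_{\nabla_SH_t}(0)$ is an $N$-dimensional small-ball/density quantity for a non-Gaussian vector, and we need it to match the Gaussian value to exponential precision in both directions. For the upper bound we plan to handle it through a conditional density bound using SSG exponential moments together with a smoothing by the independent Gaussian part. For the lower bound we plan to use a local CLT at exponential scale, which we expect to be the hardest technical step.
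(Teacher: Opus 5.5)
Your Step~1 matches the paper: condition on $J$ and apply Kac--Rice to the Gaussian part. The gap is that the proposal never gives a working mechanism for the quantity you yourself flag as the crux. After Kac--Rice (and exponential Chebyshev for the energy indicator), what must be compared with the Gaussian case to precision $e^{o(N)}$ is the joint exponential moment $\mathbb{E}_{\sigma,J}\big[\exp\big(\alpha H_N^J(\sigma)-\gamma\|B(\sigma)^\top\nabla H_N^J(\sigma)\|_2^2\big)\big]$ with $\gamma>0$. This is a small-ball (anti-concentration) quantity. The real-argument bound $\mathbb{E}e^{\sum_I a_IJ_I}\le e^{\sum_I a_I^2/2}$ only controls upper tails, so neither ``SSG exponential moments plus smoothing by the Gaussian part'' nor an unspecified ``LDP-type comparison'' can bound it.

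The complex Hubbard--Stratonovich route you discard is exactly what the paper uses. SSG (which also forces $\mathbb{E}X^3=0$) gives $|M_X(x+iy)|\le\exp\big(\frac{x^2-y^2}{2}+cx^4+\min\{y^2/2,cy^4\}\big)$. The Gaussian Brascamp--Lieb inequality then bounds the leftover Gaussian integral by $e^{C\Delta_N}$, where $\Delta_N=\sum_I(a_I^4+\|b_I\|_2^4)$. But $\Delta_N=o(N)$ only for delocalized $\sigma$, and a pointwise-in-$\sigma$ comparison as in your Step~2 is actually false. Take $\sigma=\sqrt N e_1$, Rademacher $J$, and $p=4$. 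The coordinates of $B(\sigma)^\top\nabla H_N^J(\sigma)$ are then i.i.d.\ sums of $4$ signs, each vanishing with probability $3/8$. So for $t$ close to $1$, the weight $\mathbb{E}_J e^{-\frac{t}{2p(1-t)}\|B^\top\nabla H_N^J\|_2^2}$ is exponentially larger than its Gaussian value $(1-t)^{(N-1)/2}$. The paper handles this by:
\begin{itemize}
\item splitting the coordinates of $\sigma$ into localized and delocalized ones;
\item using a bound uniform over shifts $v$ to absorb the high-influence part of $H_N^J$;
\item integrating over $\sigma$ with a Beta-distribution computation.
\end{itemize}
None of this appears in your plan.

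For the lower bound, you correctly see that typical $J$ gives the wrong rate ($-\frac{t}{2(1-t)}$ instead of $\frac12\log(1-t)$), so one must tilt toward $J$ with atypically small spherical gradient. However, ``a local CLT at exponential scale'' is not an argument. Comparing with Gaussian $J$ on the same event because the covariances agree is also invalid, since equal covariances do not give equal large-deviation rates. The paper instead proves that the annealed joint log-MGF $\Lambda_N(\alpha,\beta)$ of $\big(H_N^J(\sigma)/\sqrt{N(N-1)},\,y_N^J(\sigma)\big)$ converges to the Gaussian limit for all $(\alpha,\beta)$. This uses cumulant expansions on the typical event $\|\sigma\|_\infty\le\log N$, with uniform integrability for $\beta<0$ again coming from the Brascamp--Lieb $L^q$ bound. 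G\"artner--Ellis and Varadhan's lemma then finish.

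Finally, the dominant $J$ lie in an exponentially rare set, so controlling the Hessian on a typical event of probability $\ge 1/2$ does not isolate the determinant. You need the bad-spectrum event to be negligible on that exponential scale. The paper gets this from three ingredients:
\begin{itemize}
\item superexponential ($N^2/\log N$-speed) concentration of the regularized log-determinant, via Klein's lemma and convex-Lipschitz concentration for subgaussian entries;
\item universality of the averaged spectral measure;
\item a Wegner estimate for $A+\mathrm{GOE}$, which handles the logarithmic singularity in the lower bound.
\end{itemize}
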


\begin{example}
   Assume that $J_I$'s are i.i.d.\ symmetric Racemacher, i.e., $\mathbb{P}(J_I = \pm 1)  = 1/2$. It is centered with unit variance. Moreover, its moment generating function satisfies \eqref{Equation:SSG} since $\mathbb{E}\big[e^{\lambda J}\big] = \cosh(\lambda) \leq e^{\lambda^2/2}$. Then the conclusion of Theorem \ref{Theorem:Universality_Main} holds.
\end{example}

\begin{example}
    Assume that $J_I$'s are i.i.d. sampled uniformly from $(-\sqrt{3}, \sqrt{3})$. It is centered with unit variance. Moreover, its moment generating function satisfies \eqref{Equation:SSG} since $\mathbb{E}\big[e^{\lambda J}\big] = \sinh(\sqrt{3}\lambda)/(\sqrt{3}\lambda) \leq e^{\lambda^2/2}$. Thus, \eqref{universality:eq1} holds.
\end{example}

\begin{remark}\label{rem:gaussian_perb}
    When $t = 1$, $H_1=H_N^J$ and the statement \eqref{universality:eq1} is not necessarily true. Indeed, when $J_I$'s are discrete, $H_N^J$ may possess nonisolated critical points. For example, for the symmetric Rademacher disorder, $J_I$'s satisfy all assumptions in Theorem \ref{Theorem:Universality_Main}. However, on the positive-probability event $\{J_I = 1 \text{ for all }I\in[N]^p\}$, we have
    \[H_N^J(\sigma)=N^{-(p-1)/2}\Big(\sum_{i=1}^N\sigma_i\Big)^p\]
   and its spherical gradient and Hessian vanish on the equator ${\sum_{i = 1}^N\sigma_i=0}$. Thus, the Hamiltonian fails to be a Morse function with positive probability and $\mathbb{E}\big[\text{Crt}_1(\mathbb{R})\big] = \infty$.   In addition, Kim~\cite{kim2026regularized} recently showed that, for every $p \geq 3$ and $\sigma > 1$, one can construct a symmetric random variable $X$ with the density $\mu = \varepsilon \texttt{Normal}(0,\sigma^2) + (1-\varepsilon)\nu$, where $\varepsilon\in(0,1)$ is sufficiently small and $\nu$ is a symmetric, compactly supported $C^\infty$ density, whose first $2p$ moments match those of the standard Gaussian random variable. In particular, $X$ has zero mean, unit variance, and a strictly positive $C^\infty$ density. It is subgaussian, but not strictly subgaussian. If $J_I$'s are i.i.d. sampled from $X$, Kim proved that there exists a compact interval $B\subset(0,\infty)$ such that
   \[\liminf_{N\to\infty}\frac{1}{N}\log\mathbb E_\mu\big[\operatorname{Crt}_N^J(B)\big] > \lim_{N\to\infty} \frac{1}{N} \log\mathbb E\big[\operatorname{Crt}_N^G(B)\big].\]
   Thus, even Gaussian moment matching through order $2p$, together with smoothness, and subgaussianity, does not by itself imply universality of the unperturbed annealed complexity.
\end{remark}

Our proof of Theorem \ref{Theorem:Universality_Main} starts with the standard Kac-Rice formula for the annealed complexity. Note that even for smooth non-Gaussian disorder, conditioning on a vanishing gradient makes the conditional law of the Hessian and the density of the gradient difficult to analyze. By the virtue of the form of the disorder $\sqrt{1 - t}G_I + \sqrt{t}J_I$, it allows us to avoid this obstruction by first conditioning on $J$ and then applying Kac-Rice formula to the remaining Gaussian field, for which the conditional Hessian is a deterministic matrix plus an independent GOE perturbation. We then take average over $J$ and control the remaining dependence through some concentration and large-deviation estimates. 

While we utilize several known estimates and results from \cite{ben2023exponential,ho2025hessian} to implement the above approach, the main key ingredient of this paper is played by the following joint large deviation principle (LDP), which holds under the strict subgaussianity assumption \eqref{Equation:SSG}.  Let $B(\sigma)$ stand for a semi-orthogonal matrix representing the orthogonal projection from $\mathbb R^N$ onto the tangent space of $\sqrt N\mathbb{S}^{N - 1}$ at $\sigma$, i.e., $B(\sigma)$ satisfies 
\begin{align}
    B(\sigma)^{\top}B(\sigma) &= I_{N - 1}, \label{Equation:Projection_1} \\
    B(\sigma)B(\sigma)^{\top} &= I_N -\frac{\sigma\sigma^\top}{N} =:P(\sigma), \label{Equation:Projection_2}
\end{align}
where $I_{N-1}$ and $I_N$ the $(N-1)$- and $N$-dimensional identity matrices, respectively.

\begin{theorem}\label{Theorem:LDP}
    Let $p \geq 3$. Let $\{J_I\}_{I \in [N]^p, N \in \mathbb{N}}$ be independent  with zero mean and unit variance. Assume that they satisfy \eqref{Equation:SSG}. Under the product measure  $\mathbb{P}_J \otimes \mathbb{P}_\sigma$ for $\sigma \sim \mathbb{P}_\sigma := \text{Unif}(\sqrt{N}\mathbb{S}^{N-1})$, the pair
    \begin{equation}\label{Equation:Joint_2}
        \Big(\dfrac{H_N^J(\sigma)}{\sqrt{N(N - 1)}}, \dfrac{\|B(\sigma)^\top\nabla H_N^J(\sigma)\|_2^2}{N}\Big)
    \end{equation}
    satisfies the LDP  at speed $N$ with the good rate function 
    \[\mathfrak{I}_2(a, b) = \begin{cases}
        \frac{a^2}{2} + \frac{b - p}{2p} + \frac{1}{2}\log\big(\frac{p}{b}\big), &a\in \mathbb{R}, b > 0, \\
        \infty, &a\in \mathbb{R}, b \leq 0.
    \end{cases}\]
\end{theorem}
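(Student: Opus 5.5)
The rate function $\mathfrak{I}_2$ is exactly the Gaussian one. If $J$ is replaced by $G$, then $H_N^G(\sigma)\sim N(0,N)$ and $B(\sigma)^\top\nabla H_N^G(\sigma)$ is a centered Gaussian vector in $\mathbb{R}^{N-1}$ with covariance $pI_{N-1}$, independent of $H_N^G(\sigma)$. The rates $a^2/2$ and $\frac{b-p}{2p}+\frac12\log\frac pb$ (the Cramér rate of $p\chi^2_{N-1}/N$) follow. So the plan is to prove the LDP via the Gärtner--Ellis theorem, showing that the normalized log-moment generating function of the pair \eqref{Equation:Joint_2} under $\mathbb{P}_J\otimes\mathbb{P}_\sigma$ has the same limit as in the Gaussian case:
\[
\Lambda(\lambda_1,\lambda_2)=\lim_{N\to\infty}\frac1N\log \mathbb{E}\exp\Big(N\lambda_1\tfrac{H_N^J(\sigma)}{\sqrt{N(N-1)}}+\lambda_2\|B(\sigma)^\top\nabla H_N^J(\sigma)\|_2^2\Big)=\frac{\lambda_1^2}{2}-\frac12\log(1-2p\lambda_2)
\]
for $\lambda_2<1/(2p)$, and $+\infty$ otherwise. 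This $\Lambda$ is differentiable on its open domain and steep at $\lambda_2\uparrow 1/(2p)$. Its Legendre transform is $\mathfrak{I}_2$, so Gärtner--Ellis yields the full LDP with a good rate function.

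\textbf{Linearization.} The key device is to make the exponent linear in $J$. For $\lambda_2\ge 0$, write $e^{\lambda_2\|g\|^2}=\mathbb{E}_Z e^{\sqrt{2\lambda_2}\langle Z,g\rangle}$ with $Z\sim N(0,I_{N-1})$ independent of everything else. (The case $\lambda_2<0$ is handled the same way after the bound below, or simply by monotonicity and tightness.) Conditional on $(\sigma,Z)$ the exponent becomes $\sum_I J_I c_I(\sigma,Z)$, where
\[
c_I=N^{-(p-1)/2}\Big(\lambda_1\sqrt{N/(N-1)}\,\sigma_I+\sqrt{2\lambda_2}\,\partial_{\sigma}\sigma_I\cdot B(\sigma)Z\Big).
\]

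\textbf{Upper bound.} Independence and \eqref{Equation:SSG} give $\mathbb{E}_J e^{\sum J_Ic_I}\le e^{\frac12\sum_I c_I^2}=\mathbb{E}_G e^{\sum G_Ic_I}$, pointwise in $(\sigma,Z)$. Integrating back over $Z$ and $\sigma$ bounds the $J$-moment generating function by the Gaussian one, which is computed exactly and is independent of $\sigma$ by rotation invariance. This yields $\limsup\le\Lambda$, and in particular exponential tightness.

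\textbf{Lower bound.} Restrict to a delocalization event $D=\{\|\sigma\|_\infty\le \log N,\ \|B(\sigma)Z\|_\infty\le C\log N\}$. On $D$ each $|c_I|\le C' N^{-(p-1)/2}(\log N)^{p}$. Since $J$ is subgaussian with unit variance, $\log\mathbb{E}e^{sJ}\ge s^2/2-C|s|^3$, hence
\[
\log\mathbb{E}_J e^{\sum J_Ic_I}\ge \tfrac12\sum_I c_I^2-C\sum_I|c_I|^3 .
\]
The error is $\sum_I|c_I|^3\lesssim N^p N^{-3(p-1)/2}(\log N)^{3p}=N^{(3-p)/2}(\log N)^{3p}=o(N)$, because $p\ge3$. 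It remains to check that the Gaussian integral $\mathbb{E}_{\sigma,Z}[\mathbbm 1_D e^{\frac12\sum c_I^2}]$ still equals $e^{N\Lambda+o(N)}$. The quantity $\frac12\sum c_I^2$ is a quadratic form in $Z$ (plus the $\lambda_1^2N/2$ term and a cross term, which vanishes up to lower order since $\sum_I\sigma_I\,\partial\sigma_I\cdot v=p\langle\sigma,v\rangle N^{p-1}=0$ for tangent $v$). Hence the tilted law of $Z$ is again Gaussian with covariance $(1-2p\lambda_2)^{-1}I$, under which $D$ has probability $1-o(1)$. This gives $\liminf\ge\Lambda$.

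\textbf{Main obstacle.} I expect the delicate point to be the exact bookkeeping of $\sum_I c_I^2$. Since the sum runs over ordered $p$-tuples with repeated indices, the quadratic form in $Z$ is only approximately $p\|Z\|^2$, with corrections from diagonal tuples that must be shown to be $o(N)$ uniformly. The steepness near $\lambda_2=1/(2p)$ also matters: there the tilted covariance blows up, so one must check that the $o(N)$ errors are uniform on compact subsets of the open domain. That suffices for Gärtner--Ellis.
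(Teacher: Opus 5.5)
Your argument for $\lambda_2\ge 0$ (real Gaussian linearization, \eqref{Equation:SSG} applied pointwise in $\sigma$ for the upper bound, third-order cumulant expansion on $\{\|\sigma\|_\infty\le\log N\}$ for the lower bound) is essentially the paper's. The regime $\lambda_2<0$, which you dispose of in a parenthesis, is a genuine gap, and it is where the paper's main work lies.

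It cannot be skipped. For $b<p$, the point $(a,b)$ is exposed by $\lambda_2=(b-p)/(2pb)<0$. If you only control $\lambda_2\ge 0$, the Legendre transform you can justify is $a^2/2+\sup_{0\le\lambda_2<1/(2p)}\{\lambda_2 b+\frac{1}{2}\log(1-2p\lambda_2)\}=a^2/2$ for $b\le p$. So the upper bound loses the lower-tail term $\frac{b-p}{2p}+\frac{1}{2}\log\frac{p}{b}$ of $\mathfrak{I}_2$, and the lower bound at those points is missing entirely.

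Neither of your proposed fixes works. For $\lambda_2=-\gamma<0$ the linearization must be $e^{-\gamma\|v\|_2^2}=\mathbb{E}e^{i\sqrt{2\gamma}\langle Z,v\rangle}$. Averaging over $J$ then produces $\prod_I M_{J_I}(x_I+iy_I)$ at complex arguments. There \eqref{Equation:SSG} only gives $|M_{J_I}(x+iy)|\le M_{J_I}(x)\le e^{x^2/2}$, which loses the crucial factor $e^{-y^2/2}$. Monotonicity gives only $\limsup_N\Lambda_N(\lambda_1,\lambda_2)\le\lambda_1^2/2$, strictly weaker than $\lambda_1^2/2-\frac{1}{2}\log(1-2p\lambda_2)$. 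Exponential tightness carries no information about the rate.

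Moreover, the missing upper bound cannot be proved pointwise in $\sigma$, as you do for $\lambda_2\ge0$. At $\sigma=\sqrt{N}e_1$ the tangential gradient has i.i.d.\ coordinates $\sum_{\ell=1}^{p}J_{I_{j,\ell}}$, $j\ge 2$, where $I_{j,\ell}$ has $j$ in position $\ell$ and $1$ elsewhere. For Rademacher disorder (which satisfies \eqref{Equation:SSG}) and even $p$, these vanish with probability $\binom{p}{p/2}2^{-p}$. Hence $\frac{1}{N}\log\mathbb{E}_J e^{-\gamma\|B(\sigma)^\top\nabla H_N^J(\sigma)\|_2^2}$ stays bounded below as $\gamma\to\infty$, whereas the Gaussian value is $-\frac{N-1}{2N}\log(1+2\gamma p)$. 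The averaging over $\sigma$ is therefore essential.

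The paper handles this in three steps:
\begin{itemize}
\item a complex bound $|M_X(x+iy)|\le\exp\big(\frac{x^2-y^2}{2}+c x^4+h(y)\big)$ with $h(y)=\min\{y^2/2,cy^4\}$, combining \eqref{Equation:SSG} with the local cumulant expansion;
\item the Gaussian Brascamp--Lieb inequality, showing the resulting error is $e^{O(\Delta_N)}$, where $\Delta_N$ is the sum of fourth powers of the coefficients;
\item since $\Delta_N=o(N)$ only for delocalized $\sigma$, a split of the coordinates of $\sigma$ at level $\log N$, projection of the gradient onto the delocalized directions, and integration of $(1+2\gamma p(1-\eta^{p-1}))^{-d_N/2}$ against the Beta law of $\eta(\sigma)$.
\end{itemize}
The matching lower bound at $\lambda_2<0$ likewise needs a complex cumulant expansion, with uniform integrability supplied by the same $L^q$ estimate.

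By contrast, the obstacle you flag is not one. Since $\sum_{I\in[N]^p}x_Iy_I=\langle x,y\rangle^p$, the identities $\sum_I b_Ib_I^\top=pI_{N-1}$ and $\sum_I a_Ib_I=0$ hold exactly. Also, G\"artner--Ellis needs only pointwise convergence, not uniformity on compacts.
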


\begin{remark}
    \rm Theorem \ref{Theorem:LDP} is an LDP universality result under \eqref{Equation:SSG}.
    Interestingly, the same strict subgaussianity assumption also appears in random matrix theory.  In \cite{GuionnetHusson2020}, Guionnet and Husson showed that the largest eigenvalue of a Wigner matrix with entries satisfying \eqref{Equation:SSG} obeys an LDP at speed $N$ with the same rate function as in the Gaussian case.  On the contrary, Augeri, Guionnet, and Husson \cite{AugeriGuionnetHusson2021} showed that this LDP universality can fail when the entries are subgaussian but not strictly subgaussian.  We anticipate an analogous failure of universality for the joint LDP of the pair in \eqref{Equation:Joint_2} if \eqref{Equation:SSG} is weakened to mere subgaussianity.
\end{remark}

\begin{remark}
\rm In this work, we only consider the pure $p$-spin model for the clarity of exposition. Indeed, following our argument, it can be shown that Theorems \ref{Theorem:Universality_Main} and \ref{Theorem:LDP} hold also for the mixed $p$-spin model. 
\end{remark}

\begin{remark}
    Before the completion of this paper, we learned of a concurrent work by Kivimae and McKenna \cite{Kiv2026}. They study the same interpolated model assuming that the non-Gaussian disorders have general subexponential tails. Their main result establishes the following lower bound:
    \[\liminf_{N \to \infty} \frac{1}{N}\log \mathbb{E}\big[\mathrm{Crt}_t((-\infty, u))\big] \geq \lim_{N \to \infty} \frac{1}{N}\log \mathbb{E}\big[\mathrm{Crt}_0((-\infty, u))\big]\]
    and it holds true also in the mixed $p$-spin models. As discussed in the preceding remark, this suggests a potentially interesting dichotomy, analogous to those observed in \cite{AugeriGuionnetHusson2021, cook2023full, GuionnetHusson2020}: universality may hold under strict subgaussianity, but need not hold under subgaussianity alone.
\end{remark}

The rest of the paper is organized as follows. Section \ref{sec:notation} introduces some notation. In Section~\ref{Section:First_Moment}, we apply the Kac-Rice formula to derive the main integral representation for the expected complexity. This computation is standard and closely follows  \cite{auffinger2013random}. Section \ref{Section:Isolation} is devoted to separating the determinant of the Hessian term in the Kac-Rice formula. It relies on the technical lemmas prepared in Section \ref{Section:Property}, where we establish superexponential concentration estimates and universality results for the empirical spectral measure of the Hessian matrix. In Section \ref{Subsection:MGF}, we study the joint moment generating function for the pair \eqref{Equation:Joint_2}, where strict subgaussianity plays a crucial role. Based on this, Section \ref{Section:Main} carries out the proofs of the large deviation principle in Theorem \ref{Theorem:LDP} as well as the asserted universality, Theorem \ref{Theorem:Universality_Main}.

\bigskip

{\noindent \bf Acknowledgements.}
W.-K. Chen is partly supported by NSF grants DMS-2246715 and Simons Foundation MPS-TSM-00025876; T.-L. Lu is partly supported by NSF grants DMS-2246715; both thank the National Center for Theoretical Sciences in Taiwan for their hospitality during summer 2026, where part of this research was conducted. A. Sen is partly supported by Simons Foundation MP-TSM-00002716. All authors warmly thank E. Subag, P. Kivimae, and B. Mckenna for the illuminating discussions. 
 In the proof of the upper bound in Proposition \ref{Proposition:General_Bound}(i), the use of the Brascamp--Lieb inequality was suggested by ChatGPT (OpenAI).
\section{Notation}\label{sec:notation}
\noindent We introduce some notation that will be used for the remainder of this paper.

\begin{itemize}
    \item For a real number $x$, denote by $x_+ = \max\{x, 0\}$.
    \item Let $\langle \cdot, \cdot \rangle$ and $\|\cdot\|_2$ denote the standard inner product and Euclidean norm on $\mathbb{R}^N$, respectively.
    \item For an real-valued $N \times N$ matrix $A$, denote by $\|A\|_{\text{F}} = \big(\sum_{i, j = 1}^N A_{ij}^2\big)^{1/2}$ the Frobenius norm of $A$. Let $\text{Spec}(A)$ denote its spectrum, with eigenvalues counted according to their algebraic multiplicities, and let $\|A\|_{\text{op}}$ denote its operator norm. When $A$ is symmetric, we order its eigenvalues as $\lambda_1^\uparrow(A) \leq \cdots \leq \lambda_N^\uparrow(A)$ and define its empirical spectral measure by $\mu_A = N^{-1}\sum_{j = 1}^N \delta(\lambda_j^\uparrow(A))$, where $\delta(x)$ denotes the Dirac measure at $x$.
    \item For two probability measures $\mu,\nu$ on $\mathbb{R}$ with finite first moments, define the bounded Lipschitz distance by
    \[d_{\text{BL}}(\mu, \nu) = \sup_{\|f\|_{\mathrm{Lip}} \leq 1, \|f\|_\infty \leq 1} \int_{\mathbb{R}} f(x)\,(\mu - \nu)(dx),\]
    where $\|f\|_{\text{Lip}}$ and $\|f\|_\infty$ denotes the Lipschitz seminorm and supremum norm of the function $f \colon \mathbb{R} \to \mathbb{R}$, respectively.
    \item We write $f(N) = O_N(g(N))$ if there exists an absolute constant $C > 0$ such that
    $|f(N)|\leq Cg(N)$ for all sufficiently large $N$. Similarly, $f(N) = o_N(g(N))$ if
    $|f(N)/g(N)| \to 0$ as $N \to \infty$.
\end{itemize}

\section{First Moment of Complexity}\label{Section:First_Moment}

\noindent Recall that $B(\sigma)$ is a $N \times (N - 1)$ semi-orthogonal matrix representing the orthogonal projection from $\mathbb R^N$ onto the tangent space of $\sqrt N\mathbb{S}^{N - 1}$ at $\sigma$. We can then represent the spherical gradient and Hessian at $\sigma \in \sqrt{N}\mathbb{S}^{N - 1}$ by
\begin{align*}
    \nabla_S H_t(\sigma) &= B(\sigma)^\top\nabla H_t(\sigma), \\
    \nabla_S^2H_t(\sigma) &= B(\sigma)^\top\Big(\nabla^2H_t(\sigma) - \dfrac{pH_t(\sigma)}{N}I_N\Big)B(\sigma),
\end{align*}
where $\nabla$ and $\nabla^2$ denoted the ordinary Euclidean gradient and Hessian. Last, recall the Gaussian orthogonal ensemble (GOE) is the centered Gaussian matrix $(g_{ij})_{1 \leq i, j \leq N}$ with covariance structure
\[\mathbb{E}[g_{ij}g_{k\ell}] = \dfrac{\delta_{ik}\delta_{j\ell} + \delta_{i\ell}\delta_{jk}}{N}.\]

\subsection{Covariance Structure}

\begin{proposition}[Covariance Profile for Spherical Derivative]\label{Proposition:Conditional_Covariance}
	For any fixed $\sigma \in \sqrt{N}\mathbb{S}^{N - 1}$, conditioning on $J_I$'s, $(H_t(\sigma),\nabla_S H_t(\sigma), \nabla_S^2 H_t(\sigma))$ is a Gaussian field with the following means and covariances:
	\begin{enumerate}[label = (\roman*)]
		\item The Hamiltonian $H_t(\sigma) \sim \texttt{Normal}\big(\sqrt{t}H_N^J(\sigma), (1 - t)N\big)$.
		\item The gradient $\nabla_S H_t(\sigma) \sim \texttt{Normal}\big(\sqrt{t}B(\sigma)^\top \nabla H_N^J(\sigma), (1 - t)pI_{N - 1}\big)$.
		\item The Hessian satisfies
        \[\mathbb{E}\big[\nabla_S^2H_t(\sigma)\big] = \sqrt{t}B(\sigma)^\top\Big(\nabla^2H_N^J(\sigma) - \dfrac{pH_N^J(\sigma)}{N}I_N\Big)B(\sigma).\]
        Moreover, for all $i, j, k, \ell \in [N - 1]$,
		\begin{align*}
		      &\text{Cov}(\nabla_S^2 H_t(\sigma)_{ij}, \nabla_S^2 H_t(\sigma)_{k\ell}) = (1 - t)N^{-1}\big[p(p - 1)(\delta_{ik}\delta_{j\ell} + \delta_{i\ell}\delta_{jk}) + p^2\delta_{ij}\delta_{k\ell}\big].
	    \end{align*}
		\item For all $i, j, k \in [N - 1]$,
        \begin{align*}
            \text{Cov}\big(H_t(\sigma), \nabla_SH_t(\sigma)\big) &= 0, \\
            \text{Cov}\big(H_t(\sigma), \nabla_S^2H_t(\sigma)\big) &= -(1 - t)pI_{N - 1}, \\
            \text{Cov}\big(\nabla_SH_t(\sigma)_i, \nabla_S^2H_t(\sigma)_{jk}\big) &= 0.
        \end{align*}
	\end{enumerate}
    Additionally, under the conditional measure $\mathbb{P}(\cdot \mid \nabla_SH_t(\sigma) = 0)$, we have the joint law 
    \begin{align*}
        H_t(\sigma) & \overset{d}{=} \sqrt{\dfrac{N(N - 1)(p - 1)}{p}} x_N^{J, Z}(\sigma) \\
        \nabla_S^2H_t(\sigma) &\overset{d}{=} \sqrt{\dfrac{(N - 1)p(p - 1)}{N}}\big(M_{N - 1}^{J, \text{GOE}}(\sigma) - x_N^{J, Z}(\sigma)I_{N - 1}\big)
    \end{align*}
    where 
    \begin{align}
        x_N^{J, Z}(\sigma) &:= \sqrt{\dfrac{p}{p - 1}} \Big(\sqrt{t}\dfrac{H_N^J(\sigma)}{\sqrt{N(N - 1)}} + \sqrt{1 - t}\dfrac{Z}{\sqrt{N - 1}}\Big), \label{Equation:Def_x} \\
        M_{N - 1}^{J, \text{GOE}}(\sigma) &:= \sqrt{t}\sqrt{\dfrac{N}{N - 1}}\dfrac{B(\sigma)^\top\nabla^2H_N^J(\sigma)B(\sigma)}{\sqrt{p(p - 1)}} + \sqrt{1 - t} \cdot \text{GOE}_{N - 1}, \label{Equation:Def_M}
    \end{align}
    and $Z \sim \texttt{Normal}(0, 1)$. 
\end{proposition}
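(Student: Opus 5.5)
Conditionally on $J$, write $H_t=\sqrt{t}H_N^J+\sqrt{1-t}H_N^G$. Everything $J$-dependent is a deterministic shift, so the conditional means are simply $\sqrt t$ times the corresponding derivatives of $H_N^J$. The conditional covariances are $(1-t)$ times those of the Gaussian pure $p$-spin field.

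\textbf{Step 1: covariance kernel of $H_N^G$.} This reduces (i)--(iv) to the classical Gaussian computation of \cite{auffinger2013random}. The kernel is
\[
\mathbb E[H_N^G(\sigma)H_N^G(\sigma')]=N^{-(p-1)}\langle\sigma,\sigma'\rangle^p = N\,\xi\big(\langle\sigma,\sigma'\rangle/N\big),\qquad \xi(r)=r^p .
\]
Differentiating in $\sigma,\sigma'$ up to second order and evaluating at $\sigma=\sigma'$ with $|\sigma|^2=N$ gives:
\begin{itemize}
\item $\mathrm{Var}\,H=N$;
\item $\mathrm{Cov}(\nabla H,H)=p\sigma$;
\item $\mathrm{Cov}(\nabla H)=pI+p(p-1)\sigma\sigma^\top/N$;
\item $\mathrm{Cov}(\partial_{ij}H,H)=p(p-1)\sigma_i\sigma_j/N$;
\item the Hessian--Hessian covariance equals $p(p-1)N^{-1}(\delta_{ik}\delta_{j\ell}+\delta_{i\ell}\delta_{jk})$ plus terms carrying factors of $\sigma$;
\item gradient--Hessian covariances are sums of terms each containing at least one factor of $\sigma$.
\end{itemize}

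\textbf{Step 2: project onto the tangent space.} Apply $B=B(\sigma)$ and use $B^\top\sigma=0$, which kills every term carrying a factor of $\sigma$ on a projected index. This yields:
\begin{itemize}
\item $\mathrm{Cov}(B^\top\nabla H)=pI_{N-1}$;
\item $\mathrm{Cov}(H,B^\top\nabla H)=0$;
\item $\mathrm{Cov}(B^\top\nabla H, B^\top\nabla^2H B)=0$;
\item $\mathrm{Cov}(H,B^\top\nabla^2HB)=0$.
\end{itemize}
The spherical Hessian contains the extra term $-pH/N\,I$. It contributes $-\frac pN\mathrm{Var}(H)I=-pI$ to the covariance with $H$. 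It also adds to the Hessian covariance
\[
\tfrac{p^2}{N^2}N\,\delta_{ij}\delta_{k\ell}=p^2N^{-1}\delta_{ij}\delta_{k\ell},
\]
with vanishing cross terms since $\mathrm{Cov}(B^\top\nabla^2HB,H)=0$. Multiplying everything by $(1-t)$ and adding the $\sqrt t$-scaled means gives (i)--(iv).

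\textbf{Step 3: conditioning on $\nabla_SH_t(\sigma)=0$.} Given $J$, the gradient is Gaussian and independent of $(H_t,\nabla_S^2H_t)$ by (iv), so conditioning on the gradient leaves their joint law unchanged. Write $H_t=\sqrt tH_N^J+\sqrt{(1-t)N}\,Z$. The Hessian decomposes as
\[
\nabla_S^2H_t=\sqrt t\,B^\top\nabla^2H_N^JB-\tfrac{pH_t}{N}I+\sqrt{1-t}\,W,
\]
where $W$ is the Gaussian part of $B^\top\nabla^2H_N^GB$. By Step 2, $W$ is independent of $Z$ and has covariance $p(p-1)N^{-1}(\delta_{ik}\delta_{j\ell}+\delta_{i\ell}\delta_{jk})$. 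Hence
\[
W\overset d=\sqrt{\tfrac{(N-1)p(p-1)}{N}}\,\mathrm{GOE}_{N-1},
\]
using the GOE normalization by $N-1$.

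\textbf{Step 4: match the displayed formulas.} Check that
\[
\tfrac pN H_t=\sqrt{\tfrac{(N-1)p(p-1)}{N}}\;x_N^{J,Z}.
\]
Indeed, $\tfrac pN\sqrt{N(N-1)(p-1)/p}=\sqrt{(N-1)p(p-1)/N}$, and expanding $x_N^{J,Z}$ recovers $H_t$. For the deterministic Hessian piece, factoring out $\sqrt{(N-1)p(p-1)/N}$ produces exactly the $\sqrt{N/(N-1)}/\sqrt{p(p-1)}$ prefactor in \eqref{Equation:Def_M}.

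\textbf{Main obstacle.} The only real work is the bookkeeping in Step 1: differentiating the kernel correctly and confirming that every $\sigma$-carrying term vanishes after projection. The key example is the $p(p-1)\sigma_i\sigma_j/N$ term in $\mathrm{Cov}(\nabla^2H,H)$, which is what makes the $-pI_{N-1}$ covariance come entirely from the radial correction.
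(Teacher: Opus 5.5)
Your proposal is correct and takes essentially the same approach as the paper, whose proof simply conditions on $J$ (so that all $J$-dependence becomes a deterministic shift) and defers the remaining Gaussian covariance computation to \cite[Lemma 3.2]{auffinger2013random}. Your kernel differentiation and projection via $B(\sigma)$ is that computation written out, and you correctly read the conditioning on $\nabla_S H_t(\sigma)=0$ as being given $J$, which is how it enters the Kac--Rice formula.
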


\begin{proof}
    These are straightforward computations, see \cite[Lemma 3.2]{auffinger2013random}.
\end{proof}

\subsection{Kac-Rice Formula}

\begin{proposition}
For every Borel set $S\subseteq\mathbb{R}$ and every $t\in[0,1)$, we have
    \begin{equation}\label{Equation:Kac_Rice}
        \begin{aligned}
            \mathbb{E}_{J, G}\big[\text{Crt}_t(S)\big] &= e^{N\big(\frac{1}{2}\log(p - 1) - \frac{1}{2}\log(1 - t) + \frac{1}{2}\big) + o_N(N)}  \\
            &\hspace{-2cm} \cdot \mathbb{E}_{\sigma, J, Z, \text{GOE}}\Big[e^{-\frac{Nt}{2p(1 - t)}y_N^J(\sigma)}\big|\det\big(M_{N - 1}^{J, \text{GOE}}(\sigma) - x_N^{J, Z}(\sigma)I_{N - 1}\big)\big| \cdot \mathbbm{1}_{\big\{x_N^{J, Z}(\sigma) \in \sqrt{\frac{Np}{(N - 1)(p - 1)}}S\big\}}\Big], 
        \end{aligned}
    \end{equation}
    where
    \begin{equation}\label{Equation:Def_y}
        y_N^J(\sigma) = N^{-1}\|B(\sigma)^\top\nabla H_N^J(\sigma)\|_2^2 
    \end{equation}
and the random variables
$\sigma\sim\operatorname{Unif}(\sqrt{N}\mathbb{S}^{N-1})$,
$Z\sim\texttt{Normal}(0,1)$, and $\mathrm{GOE}_{N-1}$
are mutually independent and independent of the disorder $J$.
\end{proposition}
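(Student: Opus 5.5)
We condition on the disorder $J$ and apply the Gaussian Kac--Rice formula to the field $H_t$, whose conditional law given $J$ is described in Proposition \ref{Proposition:Conditional_Covariance}. For $t<1$ the conditional gradient $\nabla_S H_t(\sigma)$ has nondegenerate covariance $(1-t)pI_{N-1}$. The conditional Hessian is a deterministic matrix plus $\sqrt{1-t}$ times an independent GOE, so the field is almost surely Morse and the standard nondegeneracy hypotheses (e.g.\ Adler--Taylor, Theorem 12.1.1) hold for almost every $J$. This gives
\[
\mathbb{E}_G[\mathrm{Crt}_t(S)\mid J]=\int_{\sqrt N\mathbb S^{N-1}}\mathbb{E}\Big[|\det\nabla_S^2H_t(\sigma)|\,\mathbbm 1_{\{H_t(\sigma)\in NS\}}\,\Big|\,\nabla_SH_t(\sigma)=0,J\Big]\,\varphi_{\nabla_SH_t(\sigma)\mid J}(0)\,d\sigma .
\]
We then take $\mathbb E_J$ and use Tonelli to exchange it with the $\sigma$-integral, which is legitimate since everything is nonnegative.

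Next we evaluate each factor. By Proposition \ref{Proposition:Conditional_Covariance}(ii), the Gaussian density at zero equals
\[
(2\pi(1-t)p)^{-(N-1)/2}\exp\Big(-\frac{t\|B^\top\nabla H_N^J\|_2^2}{2(1-t)p}\Big)=(2\pi(1-t)p)^{-(N-1)/2}e^{-\frac{Nt}{2p(1-t)}y_N^J(\sigma)}.
\]
Part (iv) says the gradient is uncorrelated with $H_t$ and with $\nabla_S^2H_t$. Conditioning on $\nabla_SH_t=0$ therefore leaves the joint law of $(H_t,\nabla_S^2H_t)$ unchanged, and the final representation of Proposition \ref{Proposition:Conditional_Covariance} applies. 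The determinant becomes $\big(\tfrac{(N-1)p(p-1)}{N}\big)^{(N-1)/2}|\det(M^{J,\mathrm{GOE}}_{N-1}-x_N^{J,Z}I_{N-1})|$. The event $H_t\in NS$ becomes $x_N^{J,Z}\in\sqrt{\tfrac{Np}{(N-1)(p-1)}}\,S$. Writing the surface integral as $|\sqrt N\mathbb S^{N-1}|\,\mathbb E_\sigma$ turns the integral into an expectation over a uniform $\sigma$ independent of $(J,Z,\mathrm{GOE})$.

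Finally we collect constants. The prefactor is
\[
|\sqrt N\mathbb S^{N-1}|\,(2\pi(1-t)p)^{-(N-1)/2}\Big(\tfrac{(N-1)p(p-1)}{N}\Big)^{(N-1)/2}.
\]
We have $|\sqrt N\mathbb S^{N-1}|=2\pi^{N/2}N^{(N-1)/2}/\Gamma(N/2)$, and Stirling gives $\frac1N\log|\sqrt N\mathbb S^{N-1}|\to\tfrac12\log(2\pi e)$. The remaining factors contribute $-\tfrac12\log(2\pi(1-t)p)+\tfrac12\log(p(p-1))$ per unit $N$. The total exponent is $\tfrac12\log(p-1)-\tfrac12\log(1-t)+\tfrac12$, with all polynomial factors absorbed into $o_N(N)$.

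The main obstacle is the rigorous justification of Kac--Rice conditionally on $J$. We need Morse-ness almost surely and finiteness of the conditional expectations, uniformly enough to integrate over $J$. The Hessian nondegeneracy is guaranteed by the GOE component with variance $(1-t)>0$, while finiteness is not needed since the identity holds in $[0,\infty]$. The rest is a routine computation following \cite[Lemma 3.2]{auffinger2013random}.
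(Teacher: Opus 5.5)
Your proposal is correct and follows the same route as the paper: condition on $J$, apply the Gaussian Kac--Rice formula to $H_t$ (nondegenerate because $t<1$), and read off the gradient density at zero, the conditional law of $(H_t,\nabla_S^2H_t)$, and the event $\{H_t\in NS\}$ from Proposition~\ref{Proposition:Conditional_Covariance}. Your explicit bookkeeping of the prefactor (the surface area via Stirling, the Gaussian density at zero, and the determinant scaling $\big(\tfrac{(N-1)p(p-1)}{N}\big)^{(N-1)/2}$) correctly gives the exponent $\tfrac12\log(p-1)-\tfrac12\log(1-t)+\tfrac12$, which fills in what the paper leaves as a straightforward computation.
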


\begin{proof}
    We ought to count the critical points via the Kac-Rice formula (see \cite[Theorem 11.2.1]{adler2009random}). The regularity conditions are automatically satisfied since we are only applying it to the Gaussian disorder. Let $d\sigma$ denote the
surface measure on $\sqrt{N}\mathbb{S}^{N-1}$. Then
    \begin{align*}
        \mathbb{E}_{J, G}\big[\text{Crt}_t(S)\big] &= \mathbb{E}_J\big[\mathbb{E}_G\big[\text{Crt}_t(S)\big]\big] \\
        &= \mathbb{E}_J\Big[\int_{\sqrt{N}\mathbb{S}^{N - 1}} \mathbb{E}_G\big[|\det \nabla_S^2 H_t(\sigma)| \cdot \mathbbm{1}_{\{H_t(\sigma) \in NS\}} \mid \nabla_S H_t(\sigma) = 0\big]f_{\nabla_SH_t(\sigma) \mid J}(0)\,d\sigma\Big],
    \end{align*}
   where $f_{\nabla_SH_t(\sigma)\mid J}$ denotes the conditional density
of $\nabla_SH_t(\sigma)$ given $J$. From this, together with the (conditional) covariance profile illustrated in Proposition \ref{Proposition:Conditional_Covariance}, we can conclude \eqref{Equation:Kac_Rice}.
\end{proof}

\section{Properties of the Hessian Matrix}\label{Section:Property}

\noindent  Throughout this subsection, assume $N\geq 2$. For convenience, let $g = \{g_{ij}\}_{1 \leq i \leq j \leq N - 1}$ be independent standard
Gaussian random variables, independent of $J$, and realize $\text{GOE}_{N - 1}$ by setting $(\text{GOE}_{N - 1})_{ij} = g_{ij}/\sqrt{N - 1}$ for $1 \leq i < j \leq N - 1$,
$(\text{GOE}_{N - 1})_{ii} = \sqrt{2}\,g_{ii}/\sqrt{N - 1}$, and $(\text{GOE}_{N - 1})_{ji} = (\text{GOE}_{N - 1})_{ij}$. We use the notation
\begin{equation}\label{Equation:N3}
    M_{N - 1}^{J, g}(\sigma) := M_{N - 1}^{J, \text{GOE}}(\sigma) \quad \text{and} \quad \mu_{N - 1}^{J, g, \sigma} := \mu_{M_{N - 1}^{J, g}(\sigma)}.
\end{equation}
The main purpose of this subsection is to study the concentration and the universality (in $J$) of the measure $\mu_{N - 1}^{J, g, \sigma}$.

\subsection{Concentration of the Hessian Matrix}\label{Subsection:U_1}

\begin{lemma}\label{Lemma:Lip}
    For any mapping $f \colon \mathbb{R} \to \mathbb{R}$ and $\sigma \in \sqrt{N}\mathbb{S}^{N - 1}$, we define $F_N^\sigma \colon \mathbb{R}^{N^p} \times \mathbb{R}^{N(N - 1)/2}  \to  \mathbb{R}$ by
    \[F_N^\sigma(J, g) = \int_{\mathbb{R}} f(\lambda) \,\mu_{N - 1}^{J, g, \sigma}(d\lambda) = \dfrac{1}{N - 1}\text{Tr}\big(f(M_{N - 1}^{J, g}(\sigma))\big).\]
    Then there exists a constant $\newconstant\label{Constant:Lip_Carry} = \oldconstant{Constant:Lip_Carry}(p) > 0$ such that for all $N \geq 2$,
    \begin{equation}\label{Equation:Lip_1}
        \sup_{\sigma}\|F_N^\sigma\|_{\text{Lip}} \leq \dfrac{\oldconstant{Constant:Lip_Carry}\|f\|_{\text{Lip}}}{N},
    \end{equation}
    where the supremum is over $\sigma\in \sqrt{N}\mathbb{S}^{N - 1}$
\end{lemma}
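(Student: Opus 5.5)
The plan is to compose two Lipschitz estimates. The first is the standard Hoffman--Wielandt bound for matrix functionals. The second is a linear bound from $(J,g)$ to the matrix $M_{N-1}^{J,g}(\sigma)$ in Frobenius norm, uniform in $\sigma$.

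\emph{Step 1 (spectral functional).} For symmetric $A,A'$ of size $N-1$, the Hoffman--Wielandt inequality gives $\sum_j(\lambda_j^\uparrow(A)-\lambda_j^\uparrow(A'))^2\le\|A-A'\|_{\mathrm F}^2$. Combined with Cauchy--Schwarz, this yields
\[
\Big|\tfrac{1}{N-1}\mathrm{Tr} f(A)-\tfrac{1}{N-1}\mathrm{Tr} f(A')\Big|\le \frac{\|f\|_{\mathrm{Lip}}}{N-1}\sum_j|\lambda_j^\uparrow(A)-\lambda_j^\uparrow(A')|\le \frac{\|f\|_{\mathrm{Lip}}}{\sqrt{N-1}}\|A-A'\|_{\mathrm F}.
\]
It therefore suffices to show that the map $(J,g)\mapsto M_{N-1}^{J,g}(\sigma)$ is Lipschitz from the Euclidean norm to the Frobenius norm, with constant $C(p)/\sqrt N$ uniformly in $\sigma$. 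Since $(N-1)^{-1/2}N^{-1/2}\le 2/N$ for $N\ge 2$, this gives \eqref{Equation:Lip_1}.

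\emph{Step 2 (the GOE part).} The map is linear in $(J,g)$ by \eqref{Equation:Def_M}, so we may bound each part separately. The GOE part has $\|\mathrm{GOE}_{N-1}(g)\|_{\mathrm F}^2=\frac{1}{N-1}\big(2\sum_{i<j}g_{ij}^2+2\sum_i g_{ii}^2\big)\le \frac{2}{N-1}\|g\|_2^2$. Its Lipschitz constant is therefore at most $\sqrt{2/(N-1)}$, and the factor $\sqrt{1-t}\le 1$ is harmless.

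\emph{Step 3 (the $J$ part; the main point).} We need to bound $\|B^\top\nabla^2H_N^J(\sigma)B\|_{\mathrm F}\le\|\nabla^2 H_N^J(\sigma)\|_{\mathrm F}$, using that $B$ is semi-orthogonal. The Hessian entries are
\[
\partial_{ab}H_N^J(\sigma)=N^{-(p-1)/2}\sum_I J_I\,\partial_{ab}\sigma_I .
\]
For each $I$, $\partial_{ab}\sigma_I$ is a sum of at most $p(p-1)$ terms, each a product of $p-2$ coordinates of $\sigma$. Write $\nabla^2H^J=N^{-(p-1)/2}\sum_{\alpha\neq\beta}T_{\alpha\beta}(J)$. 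Here $T_{\alpha\beta}$ is the matrix obtained by contracting the tensor $J$ with $\sigma$ in all slots except $\alpha,\beta$, so its entries have the form $\sum_{I:\,i_\alpha=a,\,i_\beta=b}J_I\prod_{\gamma\ne\alpha,\beta}\sigma_{i_\gamma}$. By Cauchy--Schwarz over the contracted indices,
\[
\|T_{\alpha\beta}(J)\|_{\mathrm F}^2\le \sum_{a,b}\Big(\sum_{I:\,i_\alpha=a,\,i_\beta=b}J_I^2\Big)\|\sigma\|_2^{2(p-2)}=\|J\|_2^2N^{p-2}.
\]
Hence $\|\nabla^2H^J\|_{\mathrm F}\le p(p-1)N^{-(p-1)/2}N^{(p-2)/2}\|J\|_2=p(p-1)N^{-1/2}\|J\|_2$. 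With the prefactor $\sqrt t\sqrt{N/(N-1)}/\sqrt{p(p-1)}\le\sqrt2$, the $J$ part is Lipschitz with constant $\sqrt{2p(p-1)}\,N^{-1/2}$, uniformly in $\sigma$.

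\emph{Conclusion.} Combining the two parts, the full map has Lipschitz constant at most $C(p)N^{-1/2}$ on the product space. Step 1 then gives $\|F_N^\sigma\|_{\mathrm{Lip}}\le \oldconstant{Constant:Lip_Carry}\|f\|_{\mathrm{Lip}}/N$.

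The only delicate point is the correct normalisation in Step 3: contracting $p-2$ slots against $\sigma$ produces exactly $N^{(p-2)/2}$, which cancels against $N^{-(p-1)/2}$ to leave $N^{-1/2}$. I expect no genuine obstacle beyond this bookkeeping. Note also that $f$ only needs to be Lipschitz; if $\|f\|_{\mathrm{Lip}}=\infty$ the statement is vacuous.
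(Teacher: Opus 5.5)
Your proposal is correct and follows essentially the same route as the paper: Hoffman--Wielandt plus Cauchy--Schwarz reduces the claim to a Frobenius-norm Lipschitz bound of order $N^{-1/2}$ for the affine map $(J,g)\mapsto M_{N-1}^{J,g}(\sigma)$. That bound comes from the trivial GOE estimate and from Cauchy--Schwarz over the contracted indices using $\|\sigma\|_2^2=N$.

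The only difference is bookkeeping in Step 3. You split the Hessian into the $p(p-1)$ slot-pair contractions $T_{\alpha\beta}$ and use the triangle inequality. The paper instead applies Cauchy--Schwarz entrywise to the full $\partial_{jk}\sigma_I$ and counts the at most $p^2$ pairs $(j,k)$ with $j,k\in I$. This changes only the constant. One small slip: your prefactor bound should read $\sqrt{2/(p(p-1))}$ rather than $\sqrt2$ to yield the constant $\sqrt{2p(p-1)}$ you state, but any constant depending only on $p$ suffices.
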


\begin{proof}
    For all pairs $(J, g), (J', g') \in \mathbb{R}^{N^p} \times \mathbb{R}^{N(N - 1)/2}$,
    \begin{align}
        |F_N^\sigma(J, g) - F_N^\sigma(J', g')| &= \dfrac{1}{N - 1}\bigg|\sum_{j = 1}^{N - 1} f\big(\lambda_j^{\uparrow}\big(M_{N - 1}^{J, g}(\sigma)\big)\big) - f\big(\lambda_j^{\uparrow}\big(M_{N - 1}^{J', g'}(\sigma)\big)\big)\bigg| \notag \\
        &\leq \dfrac{\|f\|_{\text{Lip}}}{N - 1}\sum_{j = 1}^{N - 1} \Big|\lambda_j^{\uparrow}\big(M_{N - 1}^{J, g}(\sigma)\big) - \lambda_j^{\uparrow}\big(M_{N - 1}^{J', g'}(\sigma)\big)\Big| \notag \\
        &\leq \dfrac{\|f\|_{\text{Lip}}}{\sqrt{N - 1}} \sqrt{\sum_{j = 1}^{N - 1} \Big|\lambda_j^{\uparrow}\big(M_{N - 1}^{J, g}(\sigma)\big) - \lambda_j^{\uparrow}\big(M_{N - 1}^{J', g'}(\sigma)\big)\Big|^2} \label{Equation:Concentration_2} \\
        &\leq \dfrac{\|f\|_{\text{Lip}}}{\sqrt{N - 1}}\Big\|M_{N - 1}^{J, g}(\sigma) - M_{N - 1}^{J', g'}(\sigma)\Big\|_{\text{F}}, \label{Equation:Concentration_3}
    \end{align}
    where \eqref{Equation:Concentration_2} holds by the Cauchy-Schwarz inequality and \eqref{Equation:Concentration_3} holds by the Hoffman-Wielandt inequality (see \cite[Lemma 2.1.19]{anderson2010introduction}). For the Hessian term in $M_{N - 1}^{J, g}(\sigma)$, we have
    \begin{align}
        &\Big\|\sqrt{t}\sqrt{\dfrac{N}{N - 1}}\dfrac{B(\sigma)^\top\nabla^2H_N^J(\sigma)B(\sigma)}{\sqrt{p(p - 1)}} - \sqrt{t}\sqrt{\dfrac{N}{N - 1}}\dfrac{B(\sigma)^\top\nabla^2H_N^{J'}(\sigma)B(\sigma)}{\sqrt{p(p - 1)}}\Big\|_{\text{F}}^2 \notag \\
        &= \dfrac{tN}{p(p - 1)(N - 1)} \Big\|B(\sigma)^\top\big(\nabla^2H_N^J(\sigma) - \nabla^2H_N^{J'}(\sigma)\big)B(\sigma)\Big\|_{\text{F}}^2 \notag \\
        &\leq \dfrac{N}{p(p - 1)(N - 1)}\Big\|\nabla^2H_N^J(\sigma) - \nabla^2H_N^{J'}(\sigma)\Big\|_{\text{F}}^2 \notag \\
        &\leq \dfrac{2}{p(p - 1)} \sum_{j, k = 1}^N\bigg[\dfrac{1}{N^{(p - 1)/2}}\sum_{I \in [N]^p} \big(J_I - J_I'\big)\dfrac{\partial^2 \sigma_I}{\partial \sigma_j\partial \sigma_k}\bigg]^2 \notag \\
        &\leq \dfrac{2}{p(p - 1)} \sum_{j, k = 1}^N \dfrac{1}{N^{p - 1}}\bigg[\sum_{I \in [N]^p} \big(J_I - J_I'\big)^2\mathbbm{1}_{\{j, k \in I\}} \bigg] \bigg[\sum_{I \in [N]^p}\Big(\dfrac{\partial^2 \sigma_I}{\partial \sigma_j\partial \sigma_k}\Big)^2 \bigg] \label{Equation:Concentration_4} \\
        &\leq \dfrac{2}{p(p - 1)} \sum_{j, k = 1}^N \dfrac{1}{N^{p - 1}}\bigg[\sum_{I \in [N]^p} \big(J_I - J_I'\big)^2\mathbbm{1}_{\{j, k \in I\}} \bigg]\bigg(p^2(p - 1)^2\sum_{I' \in [N]^{p - 2}} \sigma_{I'}^2\bigg) \label{Equation:Concentration_5}  \\
        &= \dfrac{2p(p - 1)}{N} \sum_{I \in [N]^p} \big(J_I - J_I'\big)^2\sum_{j, k = 1}^N\mathbbm{1}_{\{j, k \in I\}} \label{Equation:Concentration_6} \\
        &\leq \dfrac{2p^3(p - 1)}{N}\|J - J'\|_2^2, \label{Equation:Concentration_7}
    \end{align}
    where both \eqref{Equation:Concentration_4} and \eqref{Equation:Concentration_5} holds by the Cauchy-Schwarz inequality; and \eqref{Equation:Concentration_6} holds since $\|\sigma\|_2^2 = N$. For the GOE term in $M_{N - 1}^{J, g}(\sigma)$, we have
    \begin{equation}\label{Equation:Concentration_8}
        \big\|\sqrt{1 - t}\,\text{GOE}_{N - 1} - \sqrt{1 - t}\,\text{GOE}_{N - 1}'\big\|_{\text{F}} \leq \sqrt{\dfrac{2(1 - t)}{N - 1}\sum_{1 \leq i \leq j \leq N} (g_{ij} - g_{ij}')^2} \leq \dfrac{2\|g - g'\|_2}{\sqrt{N - 1}}.
    \end{equation}
    We see that \eqref{Equation:Lip_1} follows directly from \eqref{Equation:Concentration_3}, \eqref{Equation:Concentration_7}, and \eqref{Equation:Concentration_8}.
\end{proof}

For $\eta > 0$ and $K > 0$, define the function
\[\log_\eta(x) = \log|x + i\eta| = \dfrac{1}{2}\log(x^2 + \eta^2) \quad \text{and} \quad \log_\eta^K(x) = \min\{\log_\eta(x), \log_\eta(K)\}.\]
Note that $\log_\eta(x) \geq \log|x|$ and $\log_\eta(x) \geq \log_\eta^K(x)$ for all $x \in \mathbb{R}$. Moreover, both functions $\log_\eta(\cdot)$ and $\log_\eta^K(\cdot)$ are $(2\eta)^{-1}$-Lipschitz. Additionally, for all $x \in \mathbb{R}$, consider the random variable 
\[W_{\eta, N}^{J, g}(\sigma, x) = \int_{\mathbb{R}} \log_\eta(\lambda - x)\,\mu_{N - 1}^{J, g, \sigma}(d\lambda) = \dfrac{1}{N - 1}\text{Tr}\big[\log_\eta\big(M_{N - 1}^{J, g}(\sigma) - xI_{N - 1}\big)\big].\]

\begin{lemma}\label{Lemma:Lip_Concentration}
    Suppose $\{J_I\}_{I \in [N]^p, N \in \mathbb{N}}$ satisfies \eqref{Equation:SG} with uniform $\kappa_{\text{SG}}$, then there exists a constant $\newconstant\label{Constant:Concentration} = \oldconstant{Constant:Concentration}(\kappa_{\text{SG}}, p) > 0$ such that for any $h \geq 0$,
    \begin{equation}\label{Equation:Concentration_Hessian_1}
        \sup_{\sigma, x}\mathbb{P}_{J, g}\big(\big|W_{\eta, N}^{J, g}(\sigma, x) - \mathbb{E}_{J, g}\big[W_{\eta, N}^{J, g}(\sigma, x)\big]\big| \geq h \big) \leq \oldconstant{Constant:Concentration}^{-1}\exp\Big(-\dfrac{\oldconstant{Constant:Concentration}\eta^2N^2h^2}{\log N}\Big)
    \end{equation}
    and for any $N\geq 2$ and $\lambda > 0$,
    \begin{equation}\label{Equation:Concentration_Hessian_2}
        \sup_{\sigma, x}\mathbb{E}_{J, g}\Big[\exp\Big(\lambda \big(W_{\eta, N}^{J, g}(\sigma, x) - \mathbb{E}_{J, g}\big[W_{\eta, N}^{J, g}(\sigma, x)\big]\big)\Big)\Big] \leq \oldconstant{Constant:Concentration}^{-1}\exp\Big(\frac{\lambda^2\log N}{\oldconstant{Constant:Concentration}\eta^2N^2}\Big),
    \end{equation}
    where the supremum is over $\sigma\in \sqrt{N}\mathbb{S}^{N - 1}$ and $x \in \mathbb{R}$.
\end{lemma}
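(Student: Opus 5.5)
The plan is to combine Gaussian concentration in $g$, truncation of $J$ at level of order $\sqrt{\log N}$, and Talagrand's concentration inequality for convex Lipschitz functions of bounded independent variables. The factor $\log N$ in \eqref{Equation:Concentration_Hessian_1} is exactly the square of the truncation level.

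\emph{Step 1: reduction to convex test functions.} The function $\log_\eta$ is not convex, so I would split it as $\log_\eta=f_1-f_2$ with $f_1,f_2$ convex. We have $\log_\eta''(x)=(\eta^2-x^2)/(x^2+\eta^2)^2$. Define $f_1$ and $f_2$ by $f_1''=(\log_\eta'')_+$ and $f_2''=(\log_\eta'')_-$, with suitable normalizations of $f_i'$ at $0$. The total masses satisfy
\[
\int (\log_\eta'')_+=\log_\eta'(\eta)-\log_\eta'(-\eta)=1/\eta, \qquad \int(\log_\eta'')_-=1/\eta,
\]
so both $f_1$ and $f_2$ are convex and $O(1/\eta)$-Lipschitz. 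For a convex $f$, the map $M\mapsto \operatorname{Tr} f(M)$ is convex on symmetric matrices. Also $(J,g)\mapsto M_{N-1}^{J,g}(\sigma)-xI_{N-1}$ is affine. Hence each $F_i(J,g):=\frac1{N-1}\operatorname{Tr} f_i(M_{N-1}^{J,g}(\sigma)-xI_{N-1})$ is convex, and by Lemma \ref{Lemma:Lip} it is $O(1/(\eta N))$-Lipschitz, uniformly in $\sigma$ and $x$. It therefore suffices to prove \eqref{Equation:Concentration_Hessian_1} for $F_1$ and $F_2$ separately, with $h$ replaced by $h/2$.

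\emph{Step 2: the Gaussian part.} Fix $J$. Gaussian concentration for Lipschitz functions of $g$ gives
\[
\mathbb P_g\big(|F_i-\mathbb E_g F_i|\ge h\big)\le 2e^{-c\eta^2N^2h^2}.
\]
The conditional mean $\bar F_i(J):=\mathbb E_gF_i(J,g)$ is again convex in $J$ and $O(1/(\eta N))$-Lipschitz.

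\emph{Step 3: truncation in $J$.} Set $J^K_I=J_I\mathbbm 1_{\{|J_I|\le K\}}$ with $K^2=C\log N$, where $C=C(\kappa_{\text{SG}},p)$ is chosen so that $N^pe^{-K^2/(4\kappa_{\text{SG}}^2)}\le 1$. By the Lipschitz bound,
\[
|\bar F_i(J)-\bar F_i(J^K)|\le \frac{c}{\eta N}\|J-J^K\|_2 .
\]
I would control this with a Chernoff bound at parameter $\lambda=1/(4\kappa_{\text{SG}}^2)$. Condition \eqref{Equation:SG} gives $\mathbb E e^{\lambda J_I^2}<C_0$ for some constant $C_0$, and hence
\[
\mathbb E e^{\lambda J_I^2\mathbbm 1_{\{|J_I|>K\}}}\le 1+C_0e^{-K^2/(4\kappa_{\text{SG}}^2)}.
\]
With the choice of $C$ above, this yields $\mathbb E\, e^{\lambda\|J-J^K\|_2^2}\le e^{C_0}$, so
\[
\mathbb P\big(\|J-J^K\|_2\ge c'\eta Nh\big)\le e^{C_0-c\eta^2N^2h^2}.
\]
The same estimate shows that $|\mathbb E\bar F_i(J)-\mathbb E\bar F_i(J^K)|=O(1/(\eta N))$. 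This is harmless when $h\gtrsim \sqrt{\log N}/(\eta N)$, and for smaller $h$ the bound \eqref{Equation:Concentration_Hessian_1} is trivial once $\oldconstant{Constant:Concentration}$ is small enough.

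\emph{Step 4: Talagrand for the truncated variables.} The entries $J^K_I$ are independent and take values in $[-K,K]$. Talagrand's inequality for convex Lipschitz functions (the form used by Guionnet--Zeitouni for spectral measures) then gives
\[
\mathbb P\big(|\bar F_i(J^K)-\mathbb E\bar F_i(J^K)|\ge h\big)\le 4\exp\big(-c\eta^2N^2h^2/K^2\big)=4\exp\big(-c\eta^2N^2h^2/\log N\big).
\]
Combining Steps 2--4 proves \eqref{Equation:Concentration_Hessian_1}, since all constants are uniform in $\sigma$ and $x$.

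\emph{Step 5: the exponential moment.} The bound \eqref{Equation:Concentration_Hessian_2} follows from \eqref{Equation:Concentration_Hessian_1} by the standard argument. Write $\mathbb E e^{\lambda X}\le 1+\int_0^\infty\lambda e^{\lambda h}\,\mathbb P(|X|\ge h)\,dh$ and bound the resulting Gaussian-type integral.

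The main obstacle is Step 1 together with Step 4. Talagrand's inequality needs convexity, so the convex decomposition of $\log_\eta$ with $O(1/\eta)$ Lipschitz constants is essential. Truncating at $\sqrt{\log N}$ is precisely what costs the factor $\log N$, and keeping the tail of $\|J-J^K\|_2$ at the required superexponential scale $e^{-cN^2h^2}$ relies on the subgaussian Chernoff bound in Step 3.
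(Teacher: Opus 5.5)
Your proof is correct and follows the paper's route: you write $\log_\eta$ as a difference of convex $O(1/\eta)$-Lipschitz functions (the paper uses three pieces $\log_\eta^1,\log_\eta^2,\log_\eta^3$ rather than your two), you get convex $O(1/(\eta N))$-Lipschitz functionals of $(J,g)$ via Klein's lemma and Lemma~\ref{Lemma:Lip}, and you then apply concentration for convex Lipschitz functions of independent subgaussian variables with a logarithmic loss. The only difference is that the paper cites this last step as a black box, \cite[Proposition A.1]{cook2023full}, with $\log(N^p+N^2)$ in the exponent, whereas your Steps 2--4 (Gaussian concentration in $g$, truncation of $J$ at level $\sqrt{\log N}$ with the subgaussian Chernoff bound, then Talagrand's inequality) essentially reprove it by hand.
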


\begin{proof}
    \eqref{Equation:Concentration_Hessian_2} follows from \eqref{Equation:Concentration_Hessian_1} directly by enlarging the constant \oldconstant{Constant:Concentration} if necessary. 
    We introduce the following auxiliary functions:
    \begin{align*}
        \log_\eta^1(\lambda) &= \begin{cases}
            -\frac{\lambda}{2\eta} - \frac{1}{2} + \log_\eta(\eta), & \lambda \leq -\eta, \\
            \log_\eta(\lambda), & -\eta \leq \lambda \leq \eta, \\
            \frac{\lambda}{2\eta} - \frac{1}{2} + \log_\eta(\eta), & \lambda \geq \eta,
        \end{cases} \\
        \log_\eta^2(\lambda) &= \begin{cases}
            \log_\eta(\lambda) + \frac{1}{2} - \log_\eta(\eta), & \lambda \leq -\eta, \\
            -\frac{\lambda}{2\eta}, & \lambda \geq -\eta,
        \end{cases} \\
        \log_\eta^3(\lambda) &= \begin{cases}
            \frac{\lambda}{2\eta}, & \lambda \leq \eta,\\
            \log_\eta(\lambda) + \frac{1}{2} - \log_\eta(\eta), & \lambda \geq \eta.
        \end{cases}
    \end{align*}
    By definition, $\log_\eta(\lambda) = \log_\eta^1(\lambda) + \log_\eta^2(\lambda) + \log_\eta^3(\lambda)$ for all $\lambda \in \mathbb{R}$ and $\eta > 0$; $\log_\eta^1(\lambda)$, $-\log_\eta^2(\lambda)$, and $-\log_\eta^3(\lambda)$ are convex; and $\|\log_\eta^1(\cdot)\|_{\text{Lip}}$, $\|\log_\eta^2(\cdot)\|_{\text{Lip}}$, $\|\log_\eta^3(\cdot)\|_{\text{Lip}} \leq (2\eta)^{-1}$.
     For $i = 1, 2, 3$, set
    \[W_{i, \eta, N}^{J, g}(\sigma, x) = (-1)^{\mathbbm{1}_{\{i \neq 1\}}}\int_{\mathbb{R}} \log_\eta^i(\lambda - x)\,\mu_{N - 1}^{J, g, \sigma}(d\lambda).\]
    Since the mapping $(J, g) \mapsto M_{N - 1}^{J, g}(\sigma)$ is affine and $\lambda \mapsto (-1)^{\mathbbm{1}_{\{i \neq 1\}}}\log_\eta^i(\lambda - x)$ is convex with Lipschitz constant $\leq (2\eta)^{-1}$ (for $i = 1, 2, 3$), we have from Klein's lemma \cite[Lemma 1.2]{guionnet2000concentration} and Lemma \ref{Lemma:Lip} that $(J, g) \mapsto W_{i, \eta, N}^{J, g}(\sigma, x)$ is convex and $(\oldconstant{Constant:Lip_Carry}/2\eta N)$-Lipschitz (uniformly over deterministic realization of $g \in \mathbb{R}^{N(N - 1)/2}$, $\sigma \in \sqrt{N}\mathbb{S}^{N - 1}$, and $x \in \mathbb{R}$). Therefore, from \cite[Proposition A.1]{cook2023full}, there exists an absolute constant $c > 0$ such that
    \begin{align}
        \mathbb{P}_{J, g}\big(\big|W_{\eta, N}^{J, g}(\sigma, x) - \mathbb{E}_{J, g}\big[W_{\eta, N}^{J, g}(\sigma, x)\big] \big| \geq h\big)
        &\leq \sum_{i = 1}^3 \mathbb{P}_{J, g}\Big(\big|W_{i, \eta, N}^{J, g}(\sigma, x) - \mathbb{E}_{J, g}\big[W_{i, \eta, N}^{J, g}(\sigma, x)\big]\big| \geq \dfrac{h}{3}\Big) \notag \\
        &\leq 6\exp\Big(-\dfrac{4c\eta^2N^2h^2}{9\oldconstant{Constant:Lip_Carry}^2\max\{\kappa_{\text{SG}}, 2\}^2\log(N^p + N^2)}\Big) \label{Equation:Concentration_Hessian_3}
    \end{align}
    for all $h \geq 0$. The desired inequality \eqref{Equation:Concentration_Hessian_1} then follows by \eqref{Equation:Concentration_Hessian_3}.
\end{proof}

\begin{lemma}\label{Proposition:Op_Bound}
    Suppose $\{J_I\}_{I \in [N]^p, N \in \mathbb{N}}$ satisfies \eqref{Equation:SG} with uniform $\kappa_{\text{SG}}$. Then, the following statements hold uniformly over $\sigma \in \sqrt{N}\mathbb{S}^{N - 1}$.
    \begin{enumerate}[label = (\roman*)]
        \item Recall $x_N^{J, Z}(\sigma)$ from \eqref{Equation:Def_x}. There exists   $\newconstant\label{Constant:SG_H} = \oldconstant{Constant:SG_H}(\kappa_{\text{SG}}, p) > 0$ such that
        \begin{equation}\label{Equation:Concentration_H}
            \sup_\sigma \mathbb{P}_{J, Z}\big(|x_N^{J, Z}(\sigma)| \geq h\big) \leq 2e^{-\oldconstant{Constant:SG_H}Nh^2} \quad \text{for all} \quad h > 0.
        \end{equation}
        \item Recall $M_{N - 1}^{J, g}(\sigma)$ from \eqref{Equation:Def_M}. For all $C > 0$, there exists $\newconstant\label{Constant:Op_Bound} = \oldconstant{Constant:Op_Bound}(\kappa_{\text{SG}}, p, C) > 0$ such that
        \begin{equation}\label{Equation:Op_Bound}
            \sup_{N \geq 2}\sup_{\sigma, k \leq CN}\mathbb{E}_{J, g}\big[\|M_{N - 1}^{J, g}(\sigma)\|_{\text{op}}^k\big]^{1/k} \leq \oldconstant{Constant:Op_Bound}.
        \end{equation}
        \item There exists $\newconstant\label{Constant:Det_Bound} = \oldconstant{Constant:Det_Bound}(\kappa_{\text{SG}}, p) > 0$ such that
        \begin{equation}\label{Equation:Det_Bound}
            \sup_{N \geq 2}\sup_{\sigma} \mathbb{E}_{J, g, Z}\big[\big|\det\big(M_{N - 1}^{J, g}(\sigma) - x_N^{J, Z}(\sigma)I_{N - 1}\big)\big|^2\big]^{1/2} \leq e^{\oldconstant{Constant:Det_Bound}N}.
        \end{equation}
    \end{enumerate}
\end{lemma}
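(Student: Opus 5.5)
For (i), note that $x_N^{J,Z}(\sigma)$ is a linear combination of independent subgaussian variables. For fixed $\sigma$, $H_N^J(\sigma)=N^{-(p-1)/2}\sum_I J_I\sigma_I$, and
\[
\sum_I \sigma_I^2=\|\sigma\|_2^{2p}=N^p .
\]
Hence \eqref{Equation:SG} and independence give
\[
\mathbb{E}\exp\big(\lambda H_N^J(\sigma)/\sqrt{N(N-1)}\big)\le \exp\big(\kappa_{\text{SG}}^2\lambda^2 N/(2(N-1))\big),
\]
so $H_N^J(\sigma)/\sqrt{N(N-1)}$ is subgaussian with variance proxy $\le 2\kappa_{\text{SG}}^2/N$. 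The term $Z/\sqrt{N-1}$ is Gaussian with variance $\le 2/N$. Multiplying the two moment generating functions and applying a Chernoff bound gives \eqref{Equation:Concentration_H}, with $\oldconstant{Constant:SG_H}$ depending only on $(\kappa_{\text{SG}},p)$. The estimate is uniform in $\sigma$ because only $\|\sigma\|_2^2=N$ was used.

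For (ii), it suffices to show that $\|M_{N-1}^{J,g}(\sigma)\|_{\text{op}}$ has a subgaussian tail at scale $N^{-1/2}$ around an $O(1)$ median. Then
\[
\mathbb{E}\|M\|_{\text{op}}^k\le (C_1+C_2\sqrt{k/N})^k ,
\]
which is bounded by $\oldconstant{Constant:Op_Bound}^k$ for $k\le CN$. The GOE part is classical: its operator norm is at most $3$ with probability $1-e^{-cN}$, with Gaussian concentration at scale $N^{-1/2}$. For the $J$ part, $\|\cdot\|_{\text{op}}$ is convex and $1$-Lipschitz in Frobenius norm, and the computation \eqref{Equation:Concentration_7} shows that $J\mapsto B^\top\nabla^2H_N^JB/\sqrt{N}\cdot\sqrt{N}$ is $O(N^{-1/2})$-Lipschitz into Frobenius norm. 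Talagrand-type convex concentration for subgaussian vectors (as in \cite[Proposition A.1]{cook2023full}, possibly with a $\log N$ loss that is harmless here) then gives concentration around the mean.

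It remains to bound $\mathbb{E}\|\nabla^2H_N^J(\sigma)\|_{\text{op}}=O(1)$. I would use an $\varepsilon$-net argument on $\mathbb{S}^{N-1}\times\mathbb{S}^{N-1}$. For fixed unit $u,v$, the quantity $u^\top\nabla^2H_N^J(\sigma)v$ is a linear form in $J$ with coefficient vector of squared norm
\[
N^{-(p-1)}\sum_I(\partial_u\partial_v\sigma_I)^2\le p^2(p-1)^2N^{-(p-1)}N^{p-2}=O(1/N).
\]
So each such form is $O(N^{-1/2})$-subgaussian. A union bound over a $1/4$-net of size $9^{2N}$ then gives an $O(1)$ operator norm with probability $1-e^{-cN}$, uniformly in $\sigma$. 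I expect this step to be the main, though mild, obstacle: one must check that the net argument works with subgaussian rather than Gaussian coefficients, which it does via \eqref{Equation:SG}.

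For (iii), use
\[
|\det(M-xI)|\le(\|M\|_{\text{op}}+|x|)^{N-1}\le 2^{N}\big(\|M\|_{\text{op}}^{N-1}+|x|^{N-1}\big).
\]
The second moment is then controlled by $\mathbb{E}\|M\|_{\text{op}}^{2(N-1)}\le\oldconstant{Constant:Op_Bound}^{2N}$ from (ii) with $C=2$. The term $\mathbb{E}|x_N^{J,Z}|^{2(N-1)}$ is bounded using the tail \eqref{Equation:Concentration_H} by integrating:
\[
\mathbb{E}|x|^{2N}\le \big(C/\sqrt{\oldconstant{Constant:SG_H}}\big)^{2N}N^{N}\,N^{-N}=e^{O(N)} .
\]
Taking square roots yields $e^{\oldconstant{Constant:Det_Bound}N}$, uniformly in $\sigma$ and $N$.
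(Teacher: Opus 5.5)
Your parts (i) and (iii) match the paper's argument: a Chernoff bound from \eqref{Equation:SG}, then $|\det(M-xI)|\le(\|M\|_{\text{op}}+|x|)^{N-1}$ combined with (i) and with (ii) at $k=2(N-1)$. There is a small slip in (i). The coefficient vector of $H_N^J(\sigma)/\sqrt{N(N-1)}$ has squared norm $N^p/(N^{p-1}\cdot N(N-1))=1/(N-1)$, so the exponent in your displayed MGF bound should be $\kappa_{\text{SG}}^2\lambda^2/(2(N-1))$, without the extra factor $N$. The variance proxy $2\kappa_{\text{SG}}^2/N$ you then state is the correct one.

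The real problem is in (ii). You rightly reduce to a subgaussian tail at scale $N^{-1/2}$ above an $O(1)$ level. However, you take the tail of the $J$-part from convex concentration as in \cite[Proposition A.1]{cook2023full} and declare its $\log N$ loss harmless. It is not harmless, and the loss is really there: the paper's own use of that result in Lemma \ref{Lemma:Lip_Concentration} carries $\log(N^p+N^2)$ in the denominator. A tail $\exp(-cNh^2/\log N)$ only bounds the $L^k$ norm of $\|M\|_{\text{op}}-\mathbb{E}\|M\|_{\text{op}}$ by $C\sqrt{k\log N/N}$, which is of order $\sqrt{\log N}$ for $k\asymp N$. So \eqref{Equation:Op_Bound} would fail to be uniform over $k\le CN$, and (iii) would only give $e^{O(N\log\log N)}$.

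The fix is already in your proposal: the net argument gives the tail directly, not just the mean, so you should drop the Talagrand step. Each $u^\top\nabla^2H_N^J(\sigma)v$ is $p^2(p-1)^2\kappa_{\text{SG}}^2/N$-subgaussian, and $\|A\|_{\text{op}}\le 2\max_{u,v}|u^\top Av|$ over $1/4$-nets. The union bound then yields, for every $t>0$, $\mathbb{P}(\|\nabla^2H_N^J(\sigma)\|_{\text{op}}\ge 2t)\le 2\cdot 9^{2N}\exp\bigl(-Nt^2/(2p^2(p-1)^2\kappa_{\text{SG}}^2)\bigr)$. This is a tail $e^{-cNh^2}$ beyond an $O(1)$ threshold, uniformly in $\sigma$, and integrating it gives the moment bound for all $k\le CN$.

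This is exactly what the paper does in moment form. It applies the maximal inequality $\|\max_{j\le n}X_j\|_{L^k}\le c\,\kappa\sqrt{\log n+k}$ to the $2|\mathcal N_\delta|=e^{O(N)}$ forms $\pm\langle\nabla^2H_N^J(\sigma)y,y\rangle$, each with proxy $O(1/N)$, obtaining $O(\sqrt{(N+k)/N})=O(1)$.
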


\begin{proof}
    \eqref{Equation:Concentration_H} holds directly from Markov's inequality and \eqref{Equation:SG}:
    \begin{align*}
        \mathbb{P}_{J, Z}\big(x_N^{J, Z}(\sigma) \geq h\big) &\leq e^{-\lambda h}\mathbb{E}_{J, Z}\big[e^{\lambda x_N^{J, Z}(\sigma)}\big] \\
        &= e^{-\lambda h}\mathbb{E}_{J, Z}\Big[\exp\Big(\lambda \sqrt{\dfrac{p}{p - 1}}\Big(\sqrt{\dfrac{t}{N(N - 1)}}\sum_{I \in [N]^p} \dfrac{\sigma_I}{N^{(p - 1)/2}}J_I + \sqrt{\dfrac{1 - t}{N - 1}}Z\Big)\Big)\Big] \\
        &\leq \exp\Big[-\lambda h + \dfrac{\max\{\kappa_{\text{SG}}^2, 1\}\lambda^2p}{2(p - 1)}\Big(\dfrac{1}{N(N - 1)}\sum_{I \in [N]^p} \dfrac{\sigma_I^2}{N^{p - 1}} + \dfrac{1}{N - 1}\Big)\Big] \\
        &= \exp\Big(-\lambda h + \dfrac{\max\{\kappa_{\text{SG}}^2, 1\}\lambda^2p}{(p - 1)(N - 1)}\Big)
    \end{align*}
    Optimizing over $\lambda \geq 0$ gives us a one-sided bound in the form \eqref{Equation:Concentration_H}. By applying the same argument on $-x_N^{J, Z}(\sigma)$, we get the two-sided bound \eqref{Equation:Concentration_H}.
    
    In order to show \eqref{Equation:Op_Bound}, we see by definition \eqref{Equation:Def_M} and the tail probability estimate \eqref{Equation:Tail_of_GOE} that it suffices to prove $\mathbb{E}_J\big[\|\nabla^2H_N^J(\sigma)\|_{\text{op}}^k\big]^{1/k}$ can be bounded uniformly as long as $k \leq CN$. Observe that for all $x \in \mathbb{S}^{N - 1}$,
    \[\langle \nabla^2H_N^J(\sigma)x, x\rangle = \sum_{I \in [N]^p} \Big(\dfrac{1}{N^{(p - 1)/2}}\sum_{j, k = 1}^N\dfrac{\partial^2\sigma_I}{\partial \sigma_j\partial \sigma_k}x_jx_k\Big)J_I.\]
    Note that, as in the proof of Lemma \ref{Lemma:Lip}, the $2$-norm of the coefficients satisfies
    \[\sum_{I \in [N]^p} \Big(\dfrac{1}{N^{(p - 1)/2}}\sum_{j, k = 1}^N\dfrac{\partial^2\sigma_I}{\partial \sigma_j\partial \sigma_k}x_jx_k\Big)^2 \leq \dfrac{p^2(p - 1)^2}{N}.\]
    It follows that $\langle \nabla^2H_N^J(\sigma) x, x\rangle$ is $(p^2(p - 1)^2\kappa_{\text{SG}}^2/N)$-subgaussian by \cite[Proposition 2.6.1]{Vershynin2026}. 

    Now, for $0<\delta<1/2$, let $\mathcal{N}_\delta$ be a $\delta$-net of the unit sphere $\mathbb{S}^{N - 1}$. By \cite[Proposition 4.2.12]{Vershynin2026}, we may choose it so that $|\mathcal{N}_\delta|\leq (C_0/\delta)^N$ for an absolute constant $C_0 > 0$. For every $x\in\mathbb{S}^{N-1}$, there exists $y\in\mathcal{N}_\delta$ such that $\|x-y\|_2\leq\delta$. It follows that
    \begin{equation}\label{Equation:Hessian_op}
        \begin{aligned}
            \|\nabla^2H_N^J(\sigma)\|_{\text{op}} &= \max_{x\in\mathbb{S}^{N-1}} \big|\langle \nabla^2H_N^J(\sigma)x,x\rangle\big| \\
            &\leq \max_{y\in\mathcal{N}_\delta} \big|\langle \nabla^2H_N^J(\sigma)y, y\rangle\big| + 2\delta\|\nabla^2H_N^J(\sigma)\|_{\text{op}}.
        \end{aligned}
    \end{equation}
    Apply \eqref{Equation:Sup_SG} to the $2|\mathcal{N}_\delta|$ random variables $\big\{\pm \langle \nabla^2H_N^J(\sigma)y, y\rangle \colon y \in \mathcal{N}_\delta\big\}$. Together with \eqref{Equation:Hessian_op}, this yields
    \begin{align*}
        \mathbb{E}_J\big[\|\nabla^2H_N^J(\sigma)\|_{\text{op}}^k\big]^{1/k} &\leq \dfrac{1}{1 - 2\delta}\mathbb{E}_J\Big[\Big(\max_{y \in \mathcal{N}_\delta} \langle \nabla^2H_N^J(\sigma)y, y\rangle\Big)^k\Big]^{1/k} \\
        &\leq \dfrac{\oldconstant{Constant:Sup_SG}}{1 - 2\delta} \cdot \dfrac{p(p - 1)\kappa_{\text{SG}}}{\sqrt{N}}\sqrt{\log ( 2|\mathcal{N}_\delta|) + k} = O_N(1)
    \end{align*}
    for fixed $\delta$ and $k\leq CN$. This proves \eqref{Equation:Op_Bound}.
    
    Last, \eqref{Equation:Det_Bound} follows from
    \[\mathbb{E}_{J, g, Z}\big[\big|\det\big(M_{N - 1}^{J, g}(\sigma) - x_N^{J, Z}(\sigma)I_{N - 1}\big)\big|^2\big] \leq \mathbb{E}_{J, g, Z} \Big[\big(\| M_{N - 1}^{J, g}(\sigma)\|_{\text{op}} + |x_N^{J, Z}(\sigma)|\big)^{2(N - 1)}\Big],\]
    the inequality \eqref{Equation:Concentration_H}, and \eqref{Equation:Op_Bound} with $k = 2(N - 1)$.
\end{proof}

\begin{corollary}
    If $\{J_I\}_{I \in [N]^p, N \in \mathbb{N}}$ satisfies \eqref{Equation:SG} with uniform $\kappa_{\text{SG}}$, then there exists a constant $\newconstant\label{Constant:Upper_Truncation} = \oldconstant{Constant:Upper_Truncation}(\kappa_{\text{SG}}, p) > 0$ such that for all $M \geq 1$, we have
    \begin{equation}\label{Equation:Upper_Truncation}
        \sup_{N \ge 2}\sup_{\sigma, |x| \leq M, \eta > 0}\int_{\mathbb{R}} \big(\log_\eta(\lambda - x) - \log_\eta^K(\lambda - x)\big)\,\mathbb{E}_{J, g}\big[\mu_{N - 1}^{J, g, \sigma}\big](d\lambda) \leq \dfrac{\oldconstant{Constant:Upper_Truncation}M^2}{K^2},
    \end{equation}
    where the supremum is over $\sigma \in \sqrt{N}\mathbb{S}^{N - 1}$.
\end{corollary}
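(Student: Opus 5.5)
The plan is a pointwise bound on the integrand, followed by the uniform second-moment bound on $\|M_{N-1}^{J,g}(\sigma)\|_{\text{op}}$ from Lemma \ref{Proposition:Op_Bound}(ii).

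\emph{Pointwise bound.} Write $y = \lambda - x$. Since $\log_\eta$ is even and increasing in $|y|$, we have
\[\log_\eta(y) - \log_\eta^K(y) = \big(\log_\eta(y) - \log_\eta(K)\big)_+ .\]
This vanishes unless $|y| > K$. When $|y| > K$,
\[\log_\eta(y) - \log_\eta(K) = \tfrac12\log\frac{y^2+\eta^2}{K^2+\eta^2} \le \tfrac12 \log\frac{y^2}{K^2},\]
because $(a+\eta^2)/(b+\eta^2) \le a/b$ whenever $a \ge b > 0$. Using $\tfrac12\log s \le s$ for $s \ge 1$, applied with $s = y^2/K^2$, we obtain for every $y \in \mathbb{R}$, uniformly in $\eta > 0$,
\[0 \le \log_\eta(y) - \log_\eta^K(y) \le \frac{y^2}{K^2}.\]

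\emph{Integration.} For $|x| \le M$ we have $(\lambda - x)^2 \le 2\lambda^2 + 2M^2$. Hence the left-hand side of \eqref{Equation:Upper_Truncation} is at most
\[\frac{2}{K^2}\Big(M^2 + \mathbb{E}_{J,g}\Big[\int_{\mathbb{R}} \lambda^2 \,\mu_{N-1}^{J,g,\sigma}(d\lambda)\Big]\Big).\]
The integral in the expectation equals $\frac{1}{N-1}\mathrm{Tr}\big(M_{N-1}^{J,g}(\sigma)^2\big)$, which is bounded by $\|M_{N-1}^{J,g}(\sigma)\|_{\text{op}}^2$. By \eqref{Equation:Op_Bound} with $k = 2$ (admissible for $C = 1$ and all $N \ge 2$), its expectation is at most $\oldconstant{Constant:Op_Bound}^2$, uniformly in $N \ge 2$ and in $\sigma$.

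\emph{Conclusion.} Since $M \ge 1$, we get $M^2 + \oldconstant{Constant:Op_Bound}^2 \le (1 + \oldconstant{Constant:Op_Bound}^2) M^2$. So \eqref{Equation:Upper_Truncation} holds with $\oldconstant{Constant:Upper_Truncation} = 2(1 + \oldconstant{Constant:Op_Bound}^2)$, which depends only on $\kappa_{\text{SG}}$ and $p$. The bound is uniform in $\eta$, $x$, $\sigma$ and $N$.

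There is no real obstacle here. The only point needing care is that the pointwise bound must be uniform in $\eta$; this holds because the monotonicity inequality $(a+\eta^2)/(b+\eta^2) \le a/b$ removes $\eta$ entirely.
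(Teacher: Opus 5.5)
Your proof is correct and takes essentially the same route as the paper: both reduce the integral to $K^{-2}$ times the second moment of $\lambda - x$ under $\mathbb{E}_{J,g}[\mu_{N-1}^{J,g,\sigma}]$, and both control that moment via \eqref{Equation:Op_Bound}. The only difference is cosmetic: you get the bound $y^2/K^2$ pointwise and uniformly in $\eta$, whereas the paper writes the truncated integral with the layer-cake formula and applies Markov's inequality to the tail $\mathbb{E}_{J,g}[\mu_{N-1}^{J,g,\sigma}](|\lambda - x| \geq y)$.
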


\begin{proof}
    By definition, rewrite
    \begin{align}
        &\int_{\mathbb{R}} \big(\log_\eta(\lambda - x) - \log_\eta^K(\lambda - x)\big)\,\mathbb{E}_{J, g}\big[\mu_{N - 1}^{J, g, \sigma}\big](d\lambda) \notag \\
        &= \dfrac{1}{2}\int_{|\lambda - x| \geq K} \log\Big(\dfrac{(\lambda - x)^2 + \eta^2}{K^2 + \eta^2}\Big)\,\mathbb{E}_{J, g}\big[\mu_{N - 1}^{J, g, \sigma}\big](d\lambda) \notag \\
        &= \dfrac{1}{2}\int_K^\infty \Big(\dfrac{2y}{y^2 + \eta^2}\Big)\mathbb{E}_{J, g}\big[\mu_{N - 1}^{J, g, \sigma}\big](|\lambda - x| \geq y)\,dy \notag \\
        &\leq \int_{\mathbb{R}} |\lambda - x|^2\,\mathbb{E}_{J, g}\big[\mu_{N - 1}^{J, g, \sigma}\big](d\lambda) \int_{K}^\infty \dfrac{2}{y(y^2 + \eta^2)}\,dy, \label{Equation:Upper_Truncation_1}
    \end{align}
    where \eqref{Equation:Upper_Truncation_1} holds by Markov's inequality. By \eqref{Equation:Op_Bound},  the first term in \eqref{Equation:Upper_Truncation_1} satisfies
    \[\int_{\mathbb{R}} |\lambda - x|^2\,\mathbb{E}_{J, g}\big[\mu_{N - 1}^{J, g, \sigma}\big](d\lambda) \leq \mathbb{E}_{J, g}\big[\|M_{N - 1}^{J, g}(\sigma)\|_{\text{op}}^2 + 2|x|\|M_{N - 1}^{J, g}(\sigma)\|_{\text{op}} + x^2\big] = M^2O_N(1).\]
    Moreover, the second term in \eqref{Equation:Upper_Truncation_1} is of order $K^{-2}$, uniformly over all $\eta > 0$. These two estimations give us the desired bound \eqref{Equation:Upper_Truncation}.
\end{proof}

\subsection{Universality of the Hessian Matrix}\label{Subsection:U_2}

\noindent Throughout this subsection, we assume that the disorder variables
$J_I$'s are independent and have zero mean and unit variance.

\begin{definition}[Finite Moment] 
    A collection of random variables $\{X_\alpha\}_{\alpha \in A}$ is said to have uniformly bounded moments if for all $k \in \mathbb{N}$
    \begin{equation}\label{Equation:M}
        \sup_{\alpha \in A} \mathbb{E}\big[|X_\alpha|^k\big] < \infty. \tag{M}
    \end{equation}
\end{definition}

We note that condition \eqref{Equation:M} holds if the same collection of random variables satisfies \eqref{Equation:SG} with uniform $\kappa_{\text{SG}}$.

\begin{lemma}
    If $\{J_I\}_{I \in [N]^p, N \in \mathbb{N}}$ satisfies \eqref{Equation:M}, then
    \begin{equation}\label{Equation:Universal_Hessian_W1}
        \lim_{N \to \infty} \sup_{\sigma} d_{\text{BL}}\big(\mathbb{E}_{J, g}\big[\mu_{N - 1}^{J, g, \sigma}\big], \mu_{\text{sc}}\big) = 0,
    \end{equation}
    where the supremum is over $\sigma \in \sqrt{N}\mathbb{S}^{N - 1}$.
\end{lemma}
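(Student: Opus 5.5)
We need to show that the averaged empirical spectral measure of $M_{N-1}^{J,g}(\sigma)=\sqrt{t}\,A_N^J(\sigma)+\sqrt{1-t}\,\mathrm{GOE}_{N-1}$, where $A_N^J(\sigma):=\sqrt{N/(N-1)}\,B(\sigma)^\top\nabla^2H_N^J(\sigma)B(\sigma)/\sqrt{p(p-1)}$, converges to $\mu_{\mathrm{sc}}$ uniformly in $\sigma$.

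\textbf{Plan.} Use the method of moments, combined with a Gaussian comparison that removes the $\sigma$-dependence.

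\textbf{Reduction to moments.} Set $\bar\mu_N^\sigma:=\mathbb{E}_{J,g}[\mu_{N-1}^{J,g,\sigma}]$. By \eqref{Equation:Op_Bound} we have $\sup_\sigma\int\lambda^{2}\,\bar\mu_N^\sigma(d\lambda)\le \oldconstant{Constant:Op_Bound}^2$, so the family $\{\bar\mu_N^\sigma\}$ is tight uniformly in $\sigma$ and $N$. Since $\mu_{\mathrm{sc}}$ is determined by its moments, $d_{\mathrm{BL}}$-convergence uniform in $\sigma$ follows from
\[
\sup_\sigma\Big|\mathbb{E}_{J,g}\tfrac{1}{N-1}\mathrm{Tr}\big(M_{N-1}^{J,g}(\sigma)\big)^k-m_k\Big|\to0\quad\text{for each fixed }k.
\]
To see this, suppose uniformity fails. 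Then there are $\sigma_N$ along a subsequence with $d_{\mathrm{BL}}(\bar\mu_N^{\sigma_N},\mu_{\mathrm{sc}})\ge\varepsilon$. By tightness we can extract a weakly convergent further subsequence. Its moments converge, using uniform integrability from the bounded higher moments in \eqref{Equation:Op_Bound}, so the limit has moments $m_k$ and hence equals $\mu_{\mathrm{sc}}$. This is a contradiction.

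\textbf{Key step: Gaussian comparison.} Write $T_k(J,g):=\frac{1}{N-1}\mathrm{Tr}(M^{J,g})^k$ and $e_I(\sigma):=N^{-(p-1)/2}\,\partial^2\sigma_I$. Then $T_k$ is a polynomial in $J$ whose $J_I$-dependence enters through the coefficient matrices $e_I$. Replace $J$ by an independent Gaussian $G$ one coordinate at a time, or use the Lindeberg interpolation $\sqrt{s}J+\sqrt{1-s}G$. The first two moments match, so a third-order Taylor expansion bounds each swap by
\[
C\,\mathbb{E}\big|\partial_{J_I}^3T_k\big|\sup_I\mathbb{E}|J_I|^3 .
\]
The derivative $\partial_{J_I}^3T_k$ is a sum of traces $\frac1N\mathrm{Tr}(M^{a}E_IM^{b}E_IM^{c}E_I)$ with $E_I:=B^\top e_IB$. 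The matrix $e_I$ has Frobenius norm $O(N^{-(p-1)/2})$, so its rank is at most $2$ and $\|E_I\|_{\mathrm{op}}\le C N^{-(p-1)/2}\prod_{\alpha\notin\{j,k\}}|\sigma_{i_\alpha}|$. Summing $\|E_I\|_{\mathrm{op}}^3$ over $I\in[N]^p$ gives $N^{-3(p-1)/2}\,O\big((\sum_i|\sigma_i|^3)^{p-2}\big)N^{2}$. Using $\sum|\sigma_i|^3\le \max_i|\sigma_i|\cdot N$, this is problematic for localized $\sigma$ (for instance $\sigma=\sqrt N e_1$). I would instead bound the trace using rank at most $2$ and the operator norm of $M$, which \eqref{Equation:Op_Bound} controls in $L^k$, together with Hölder's inequality:
\[
\big|\tfrac1N\mathrm{Tr}(\cdots)\big|\le \tfrac{C}{N}\|M\|_{\mathrm{op}}^{k-3}\,\|E_I\|_{\mathrm{op}}^3 .
\]
The total error is then $N^{-1}\sum_I\|e_I\|_{\mathrm F}^3$. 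For each $(j,k)$, the coefficients $\sigma_{I'}$ with $I'\in[N]^{p-2}$ satisfy $\sum\sigma_{I'}^2=N^{p-2}$, and hence $\sum|\sigma_{I'}|^3\le N^{p-2}\max|\sigma_{I'}|\le N^{p-2}N^{(p-2)/2}$. In the worst case this yields
\[
N^{-1}\cdot N^{-3(p-1)/2}\cdot N^2\cdot N^{3(p-2)/2}=N^{-1/2},
\]
which tends to $0$ uniformly in $\sigma$. Checking this bookkeeping is the main obstacle. If the bound turns out to be too crude, I would fall back on truncating $J$ and using a resolvent (Stieltjes transform) comparison instead of moments.

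\textbf{Gaussian case.} With $J=G$ Gaussian, rotational invariance of the Gaussian pure $p$-spin Hessian shows that the law of $A_N^G(\sigma)$ does not depend on $\sigma$. Proposition~\ref{Proposition:Conditional_Covariance} with $t=1$ identifies it: $B^\top\nabla^2H^G B$ is $\sqrt{(N-1)p(p-1)/N}\,\mathrm{GOE}_{N-1}$, up to the scalar shift correlated with $H^G$. Concretely, $\nabla^2_S H^G$ has covariance equal to GOE plus a rank-one $p^2\delta_{ij}\delta_{k\ell}$ term, and adding back $pH/N$ shows that $A_N^G$ is exactly a GOE. Therefore $M^{G,g}$ is $\sqrt{t}\,\mathrm{GOE}+\sqrt{1-t}\,\mathrm{GOE}'$, which is again a GOE. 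Its expected moments converge to the semicircle moments $m_k$ by the classical Wigner theorem, with no dependence on $\sigma$. Combined with the comparison step and the moment reduction above, this proves \eqref{Equation:Universal_Hessian_W1}.
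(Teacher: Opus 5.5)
Your argument is correct, and it takes a genuinely different route from the paper.

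\textbf{How the two routes differ.} The paper's proof is a reduction. It rescales away the factor $\sqrt{N/(N-1)}$. It then passes from $B(\sigma)^\top(\cdot)B(\sigma)$ to $P(\sigma)(\cdot)P(\sigma)$, which adds one zero eigenvalue and changes $d_{\mathrm{BL}}$ by at most $2/N$. Finally it invokes \cite[Proposition 3.1]{ho2025hessian} for the uniform-in-$\sigma$ semicircle limit of the projected Hessian with disorder $\sqrt{1-t}G+\sqrt{t}J$. You instead prove that universality directly. First, you identify the Gaussian reference exactly: your covariance computation is right, $\sqrt{N/(N-1)}\,B^\top\nabla^2H_N^GB/\sqrt{p(p-1)}$ is exactly $\mathrm{GOE}_{N-1}$. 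Hence $M^{G,g}(\sigma)$ is a GOE whose law does not depend on $\sigma$, and you avoid the $B$-versus-$P$ bookkeeping. Second, you run a Lindeberg swap on the moments $\frac{1}{N-1}\mathrm{Tr}\,M^k$.

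\textbf{Why the bookkeeping works.} Since $T_k$ is a polynomial in each $J_I$, the order-$m$ terms with $3\le m\le k$ are bounded by $\frac{C}{N}\,\mathbb{E}\|M\|_{\mathrm{op}}^{k-m}\|E_I\|_{\mathrm{op}}^m$, where $M$ is the hybrid matrix with $J_I$ set to zero. Two estimates then close the argument:
\begin{itemize}
\item $\|E_I\|_{\mathrm{op}}\le C_pN^{-1/2}$ uniformly in $\sigma$, so the terms with $m\ge4$ are dominated by the $m=3$ term.
\item $\sum_I\|e_I\|_{\mathrm F}^3\le C_pN^{-3(p-1)/2}N^2\big(\sum_i|\sigma_i|^3\big)^{p-2}\le C_pN^{1/2}$.
\end{itemize}
Together these give an $O(N^{-1/2})$ error uniformly in $\sigma$, with the worst case at localized $\sigma$.

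\textbf{Two small corrections.}
\begin{itemize}
\item $\nabla^2\sigma_I$ has rank at most $p$, not $2$. For $I=(1,2,3)$ the nonzero $3\times 3$ block has determinant $2\sigma_1\sigma_2\sigma_3$. This only changes a constant.
\item $\|e_I\|_{\mathrm F}$ is not $O(N^{-(p-1)/2})$ uniformly in $\sigma$. The bound you actually use is $\|e_I\|_{\mathrm F}\le N^{-(p-1)/2}\sum_{\alpha\neq\beta}|\sigma_{I\setminus\{\alpha,\beta\}}|$.
\end{itemize}

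\textbf{What each route buys.} Yours is self-contained and gives a quantitative rate. The cost is that you need $\mathbb{E}\|M\|_{\mathrm{op}}^{k}=O(1)$ uniformly over all Lindeberg hybrids. That comes from \eqref{Equation:Op_Bound}, which is proved under \eqref{Equation:SG}, not \eqref{Equation:M}. The paper also calls on \eqref{Equation:Op_Bound} in its first step, and the lemma is only applied under \eqref{Equation:SSG}, so this is not a gap in context. If you want the statement under \eqref{Equation:M} alone, your fallback of comparing Stieltjes transforms is the right fix: $\|(M-z)^{-1}\|_{\mathrm{op}}\le(\operatorname{Im}z)^{-1}$ removes the need for operator-norm moments, and only bounded third moments enter the swap.
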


\begin{proof}
    Conditioning on $\sigma \in \sqrt{N}\mathbb{S}^{N - 1}$, $\mu_{N - 1}^{J, g, \sigma}$ is the empirical measure of the random matrix
    \[M_{N - 1}^{J, g}(\sigma) \overset{d}{=} \sqrt{\dfrac{N}{N - 1}}\dfrac{B(\sigma)^\top\nabla^2 H_N^{\sqrt{1 - t}G + \sqrt{t}J}(\sigma)B(\sigma)}{\sqrt{p(p - 1)}}.\]
    Recall the rank $(N - 1)$ projection $P(\sigma)$ from \eqref{Equation:Projection_2} and we set
    \[\overline{M}_{N - 1}^{J, g}(\sigma) = \dfrac{P(\sigma)^\top\nabla^2 H_N^{\sqrt{1 - t}G + \sqrt{t}J}(\sigma)P(\sigma)}{\sqrt{p(p - 1)}}.\]
    Then, the bounded Lipschitz distance satisfies
    \begin{align}
        d_{\text{BL}}\big(\mathbb{E}_{J, g}\big[\mu_{N - 1}^{J, g, \sigma}\big], \mu_{\text{sc}}\big) &\leq d_{\text{BL}}\Big(\mathbb{E}_{J, g}\Big[\mu_{M_{N - 1}^{J, g}(\sigma)}\Big], \mathbb{E}_{J, g}\Big[\mu_{\sqrt{\frac{N - 1}{N}}M_{N - 1}^{J, g}(\sigma)}\Big]\Big) \label{Equation:Universal_1} \\
        &\hspace{1cm} + d_{\text{BL}}\Big(\mathbb{E}_{J, g}\Big[\mu_{\sqrt{\frac{N - 1}{N}}M_{N - 1}^{J, g}(\sigma)}\Big], \mathbb{E}_{J, g}\Big[\mu_{\overline{M}_{N - 1}^{J, g}(\sigma)}\Big]\Big) \label{Equation:Universal_2} \\
        &\hspace{1cm} + d_{\text{BL}}\Big(\mathbb{E}_{J, g}\Big[\mu_{\overline{M}_{N - 1}^{J, g}(\sigma)}\Big], \mu_{\text{sc}}\Big). \label{Equation:Universal_3}
    \end{align}
    By \eqref{Equation:Op_Bound}, the first term \eqref{Equation:Universal_1} can be estimated by
    \[d_{\text{BL}}\Big(\mathbb{E}_{J, g}\Big[\mu_{M_{N - 1}^{J, g}(\sigma)}\Big], \mathbb{E}_{J, g}\Big[\mu_{\sqrt{\frac{N - 1}{N}}M_{N - 1}^{J, g}(\sigma)}\Big]\Big) \leq \Big[\Big(\dfrac{N}{N - 1}\Big)^{1/2} - 1\Big]\mathbb{E}_{J, g}\big[\|M_{N - 1}^{J, g}(\sigma)\|_{\text{op}}\big] = o_N(1),\]
    where the convergence is uniform over $\sigma \in \sqrt{N}\mathbb{S}^{N - 1}$. By definition of $B(\sigma)$ and $P(\sigma)$, the second term \eqref{Equation:Universal_2} can be estimated by
    \begin{align*}
        &d_{\text{BL}}\Big(\mathbb{E}_{J, g}\Big[\mu_{\sqrt{\frac{N - 1}{N}}M_{N - 1}^{J, g}(\sigma)}\Big], \mathbb{E}_{J, g}\Big[\mu_{\overline{M}_{N - 1}^{J, g}(\sigma)}\Big]\Big) \\
        &= d_{\text{BL}}\Big(\mathbb{E}_{J, g}\Big[\mu_{\sqrt{\frac{N - 1}{N}}M_{N - 1}^{J, g}(\sigma)}\Big], \frac{1}{N}\delta_0 + \frac{N - 1}{N}\mathbb{E}_{J, g}\Big[\mu_{\sqrt{\frac{N - 1}{N}}M_{N - 1}^{J, g}(\sigma)}\Big]\Big) \leq \dfrac{2}{N}.
    \end{align*}
    Last, note that the interpolated disorder $\sqrt{1 - t}\,G_I + \sqrt{t}J_I$ still satisfies \eqref{Equation:M}. By \cite[Proposition 3.1]{ho2025hessian}, the term in \eqref{Equation:Universal_3} also converge uniformly to $0$ as $N \to \infty$. This concludes the proof of the lemma.
\end{proof}

\begin{lemma}\label{Lemma:Truncation_of_Log}
    For all $h > 0$, there exists $\eta = \eta(h) > 0$ such that
    \begin{equation}\label{Equation:Truncation_Log}
        \sup_{N, A_N, x}\int_{\mathbb{R}} \big(\log_{\eta}(\lambda - x) - \log|\lambda - x|\big)\,\mathbb{E}[\mu_{A_N + \text{GOE}_N}](d\lambda) \leq h,
    \end{equation}
    where the supremum is over all $N \in \mathbb{N}$, deterministic symmetric matrices $A_N$, and $x \in \mathbb{R}$.
\end{lemma}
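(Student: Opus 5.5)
The plan is to reduce the statement to a uniform bound on the density of the expected empirical spectral measure of $A_N + \text{GOE}_N$. The first step is a direct computation:
\[\log_\eta(y) - \log|y| = \tfrac12\log\big(1 + \eta^2/y^2\big) \ge 0.\]
This function is even, decreasing in $|y|$, bounded by $\log(2\eta/|y|)$ for $|y|\le\eta$, and bounded by $\eta^2/(2y^2)$ for $|y|\ge\eta$.

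The second step is the key input. For any deterministic symmetric $A_N$, the measure $\mathbb{E}[\mu_{A_N+\text{GOE}_N}]$ has a density bounded by an absolute constant $C_0$, uniformly in $N$ and $A_N$. This is a Wegner-type estimate. I would obtain it through the Stieltjes transform:
\[\mathbb{E}\big[\tfrac1N \operatorname{Im}\operatorname{Tr}(A_N+\text{GOE}_N - z)^{-1}\big] \le C_0 \quad\text{uniformly in } z\in\mathbb{C}^+.\]
The argument conditions on all entries except the diagonal ones. The diagonal entries of $\text{GOE}_N$ are independent $\texttt{Normal}(0,2/N)$. By the Schur complement formula,
\[G_{ii}(z) = \big(d_i - z - c_i(z)\big)^{-1},\]
where $d_i$ is the $i$-th diagonal entry and $c_i(z)$ does not depend on $d_i$ and has $\operatorname{Im} c_i\ge 0$. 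Averaging over the Gaussian $d_i$, which has density at most $\sqrt{N/(4\pi)}$, gives only $\mathbb{E}|\operatorname{Im} G_{ii}| \le \pi\sqrt{N/(4\pi)}$. That bound is too weak, since it grows with $N$.

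The alternative is to use rotational invariance instead. Write $\text{GOE}_N = \sqrt{s}\,\text{GOE}_N' + \sqrt{1-s}\,\text{GOE}_N''$. Then $A_N+\text{GOE}_N$ is a free convolution, up to finite-$N$ effects, with a semicircle of variance $s$. The density of any measure freely convolved with a semicircle of variance $s$ is bounded by $1/(\pi\sqrt{s})$, by Biane's bound. Making this rigorous at finite $N$ is the main obstacle. Any uniform bound for the expected density would suffice. One way is an exact finite-$N$ version via the Brascamp--Lieb / Gaussian integration-by-parts bound on $\mathbb{E}\operatorname{Im} m_N(z)$. The other way is to cite the standard Wegner estimate for GOE plus a deterministic matrix: the expected number of eigenvalues in an interval $I$ is at most $C N|I|$, uniformly in $A_N$, which follows from averaging over the scalar Gaussian component $\sqrt{2/N}\,g\,I_N$ in the GOE. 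I would pursue this last route concretely. Extracting an independent scalar multiple $\xi I_N$ with $\xi\sim\texttt{Normal}(0,c/N)$ from $\text{GOE}_N$ is possible, because the GOE covariance dominates that of $\xi I_N$. Then
\[\mathbb{E}\,\#\{\lambda_j \in I\} = \sum_j \mathbb{P}(\lambda_j' + \xi\in I) \le N\cdot |I|\sqrt{N/(2\pi c)}.\]
This again gives an $\sqrt N$ loss. So the scalar shift alone is not enough, and the full free-convolution smoothing is needed. In the end I would invoke a known result giving $\mathbb{E}\mu_{A_N+\text{GOE}_N}(I)\le C|I|$ for intervals of length at least $N^{-1}$.

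The third step assumes this bound for intervals of length at least $1/N$. Split the integral into the regions $|\lambda - x|\le \epsilon$ and $|\lambda-x|>\epsilon$. On the far region the integrand is at most $\eta^2/(2\epsilon^2)$. On the near region, use dyadic shells $2^{-k-1}\epsilon<|\lambda-x|\le 2^{-k}\epsilon$ down to scale $1/N$. The density bound and the logarithmic integrand make this contribution $O(\epsilon\log(\eta/\epsilon))$-type, which is small. Below scale $1/N$ there is a residual mass of order $N^{-1}$ carrying a logarithmic singularity. I would control it using the level-repulsion/Wegner bound at all scales, i.e.\ the Gaussian smoothing of the whole matrix. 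Finally choose $\epsilon$ small and then $\eta\le\epsilon$ small so that the total is at most $h$.
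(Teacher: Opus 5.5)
Your route (compute $\log_\eta(y)-\log|y|=\tfrac12\log(1+\eta^2/y^2)$, bound the density of $\mathbb{E}[\mu_{A_N+\text{GOE}_N}]$ uniformly, then integrate) is the paper's route. However, the proposal never secures the key input, and the version you finally commit to is not enough. You settle on $\mathbb{E}[\mu_{A_N+\text{GOE}_N}](I)\le C|I|$ only for intervals with $|I|\ge N^{-1}$, and you do not name its source. That bound cannot control the region $|\lambda-x|\le N^{-1}$. It only says this region carries expected mass $O(N^{-1})$. The integrand $\tfrac12\log(1+\eta^2/(\lambda-x)^2)$ is unbounded at $\lambda=x$, and nothing in that information stops the mass from sitting arbitrarily close to $x$, uniformly in $x$, $A_N$, $N$. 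Your remedy, ``the level-repulsion/Wegner bound at all scales,'' is exactly the missing statement, and it is neither formulated nor sourced. Separately, your near-region estimate ``$O(\epsilon\log(\eta/\epsilon))$'' is negative when $\eta<\epsilon$. With a density bound $C$, the near region is at most $C\pi\eta$ for every $\epsilon$.

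The paper uses the scale-free Wegner estimate of Aizenman, Peled, Schenker, Shamis and Sodin \cite[Theorem 1]{aizenman2017matrix}: $\mathbb{E}|\text{Spec}(A_N+\text{GOE}_N)\cap I|\le CN|I|$ for \emph{every} interval $I$, with $C$ absolute and uniform in $N$ and $A_N$. So $\mathbb{E}[\mu_{A_N+\text{GOE}_N}]$ has density at most $C$ everywhere. Since $\tfrac12\int_{\mathbb{R}}\log(1+\eta^2/y^2)\,dy=\pi\eta$, the left side of \eqref{Equation:Truncation_Log} is at most $C\pi\eta$, and $\eta=h/(C\pi)$ works. No $\epsilon$-splitting or dyadic shells are needed.

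If you really only had the bound at scales $\ge N^{-1}$, your own diagonal Schur-complement computation would patch the small scales. It shows, for every $A_N$ and at every scale, that the density is at most $\sqrt{N/(4\pi)}$. Hence the region $|\lambda-x|\le N^{-1}$ contributes at most $\sqrt{N/(4\pi)}\,\big(N^{-1}\log(1+\eta^2N^2)+\min\{2N^{-1},\pi\eta\}\big)=O(\sqrt{\eta})$, uniformly in $N$. But a scale-$N^{-1}$ bound uniform in $A_N$ is essentially the same theorem, so you may as well cite it at all scales.

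Finally, your scalar-shift route is worse than you state. Since $\text{Var}(\operatorname{tr}\text{GOE}_N)=2$, an independent component $\xi I_N$ of $\text{GOE}_N$ can have $\text{Var}(\xi)$ at most $2/N^2$, not $c/N$. There is no $\sqrt{2/N}\,g\,I_N$ component, and that route loses a factor $N$, not $\sqrt N$.
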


\begin{proof}
    From \cite[Theorem 1]{aizenman2017matrix}, there exists an absolute constant $\newconstant\label{Constant:General_Wegner}>0$ such that, for every $N \in \mathbb{N}$, every deterministic symmetric $N \times N$ matrix $A_N$, and every interval $I\subseteq\mathbb{R}$, we have
    \begin{equation}\label{Equation:General_Wegner}
        \mathbb{E}(|\text{Spec}(A_N + \text{GOE}_N) \cap I|) \leq \oldconstant{Constant:General_Wegner}N|I|,
    \end{equation}
    where $|I|$ is the Lebesgue measure of $I$. \eqref{Equation:General_Wegner} implies that the empirical measure satisfies
    \[\mathbb{E}\left[\mu_{A_N + \text{GOE}_N}\right](I) = \dfrac{1}{N}\mathbb{E}(|\text{Spec}(A_N + \text{GOE}_N) \cap I|) \leq \oldconstant{Constant:General_Wegner}|I|\]
    for every interval $I$. Therefore, $\mathbb{E}\left[\mu_{A_N + \text{GOE}_N}\right]$ is absolutely continuous with respect to the Lebesgue measure with density bounded by $\oldconstant{Constant:General_Wegner}$. Consequently,
    \begin{align*}
        \int_{\mathbb{R}} \big(\log_{\eta}(\lambda - x) - \log|\lambda - x|\big)\,\mathbb{E}[\mu_{A_N + \text{GOE}_N}](d\lambda) &\leq \dfrac{\oldconstant{Constant:General_Wegner}}{2}\int_{\mathbb{R}} \log\Big(1 + \dfrac{\eta^2}{(\lambda - x)^2}\Big)\,d\lambda = \oldconstant{Constant:General_Wegner}\pi\eta.
    \end{align*}
    We see then \eqref{Equation:Truncation_Log} holds directly.
\end{proof}

\section{Isolating the  Determinant}\label{Section:Isolation}

\noindent Recall $E_\infty = 2\sqrt{(p - 1)/p}$ from \eqref{Equation:Energy}. For all $u \in \mathbb{R}$, let
\[U_N(u) := \sqrt{\frac{Np}{(N - 1)(p - 1)}}(-\infty, u) = \Big(-\infty, \sqrt{\dfrac{N}{N - 1}}\dfrac{2u}{E_\infty}\Big).\]
The main goal of this section is to isolate the determinant term in
\begin{equation}\label{Equation:L_N(u)}
  \mathcal{L}_N(u) := \mathbb{E}_{\sigma, J, g, Z}\Big[e^{-\frac{Nt}{2p(1 - t)}y_N^J(\sigma)}\big|\det \big(M_{N - 1}^{J, g}(\sigma) - x_N^{J, Z}(\sigma)I_{N - 1}\big)\big| \cdot \mathbbm{1}_{\{x_N^{J, Z} \in U_N(u)\}}\Big]  
\end{equation}
by using the concentration of the random matrix in the determinant. For all $x \in \mathbb{R}$ and $\delta > 0$, we also introduce
\begin{equation}\label{Equation:ell_N}
 \ell_N(x, \delta) := \mathbb{E}_{\sigma, J, Z}\Big[e^{-\frac{Nt}{2p(1 - t)}y_N^J(\sigma)} \mathbbm{1}_{\{x_N^{J, Z}(\sigma) \in B(x, \delta)\}}\Big].   
\end{equation}

\begin{lemma}
    For $K \geq M + 2$ and $|x| \leq M$,
    \begin{equation}\label{Equation:Log_Bound}
        0 \leq \int_{\mathbb{R}}\big(\log_\eta^K(\lambda - x) - \log|\lambda - x|\big)\,\mu_{\text{sc}}(d\lambda) \leq \eta.
    \end{equation}
\end{lemma}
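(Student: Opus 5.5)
Since $|x|\le M$ and $\mu_{\text{sc}}$ is supported on $[-2,2]$, every $\lambda$ in the support satisfies $|\lambda-x|\le M+2\le K$. The plan is to use this to remove the truncation at level $K$, and then bound what remains by an explicit integral against the semicircle density.

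\emph{Step 1: remove the truncation.} Because $\log_\eta$ is even and increasing in $|\cdot|$, we have $\log_\eta^K(\lambda-x)=\min\{\log_\eta(\lambda-x),\log_\eta(K)\}=\log_\eta(\lambda-x)$ whenever $|\lambda-x|\le K$. This holds $\mu_{\text{sc}}$-a.e., so the integrand reduces to
\[
\log_\eta(\lambda-x)-\log|\lambda-x| = \tfrac12\log\Big(1+\tfrac{\eta^2}{(\lambda-x)^2}\Big).
\]
This is nonnegative, which gives the lower bound at once.

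\emph{Step 2: bound the density and integrate.} The density of $\mu_{\text{sc}}$ is at most $\frac{2}{2\pi}=\frac1\pi$. Hence
\[
\int_{\mathbb{R}}\tfrac12\log\Big(1+\tfrac{\eta^2}{(\lambda-x)^2}\Big)\,\mu_{\text{sc}}(d\lambda)\le \frac{1}{2\pi}\int_{\mathbb{R}}\log\Big(1+\tfrac{\eta^2}{s^2}\Big)\,ds .
\]
The same integral appears in the proof of Lemma~\ref{Lemma:Truncation_of_Log}. Substituting $s=\eta v$ turns it into $\eta\int_{\mathbb{R}}\log(1+v^{-2})\,dv$. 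Integrating by parts, the boundary terms $v\log(1+v^{-2})$ vanish at $0$ and at $\infty$, and the remaining integral is $\int_{\mathbb{R}}\frac{2}{1+v^2}\,dv=2\pi$.

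\emph{Step 3: conclude.} The bound is therefore $\frac{1}{2\pi}\cdot 2\pi\eta=\eta$, which proves \eqref{Equation:Log_Bound}.

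There is no real obstacle. The only point needing care is Step 1, namely checking that the hypothesis $K\ge M+2$ makes the truncation inactive on the support of $\mu_{\text{sc}}$; the rest is the explicit integral, which is finite and linear in $\eta$.
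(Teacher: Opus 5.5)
Your proof is correct and follows the paper's argument essentially step for step. The support bound $|\lambda-x|\le M+2\le K$ makes the truncation inactive, so the integrand becomes $\tfrac12\log\bigl(1+\eta^2/(\lambda-x)^2\bigr)\ge 0$. Bounding the semicircle density by $1/\pi$ then gives $\frac{1}{2\pi}\int_{\mathbb{R}}\log(1+\eta^2/s^2)\,ds=\eta$. Your evaluation of this last integral via $s=\eta v$ and integration by parts only fills in a computation the paper states without detail.
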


\begin{proof}
    Since $\text{supp}(\mu_{\text{sc}}) \subseteq [-2, 2]$, we must have $|\lambda - x| \leq M + 2$. Therefore,
    \begin{align}
        &\int_{\mathbb{R}} \big(\log_\eta^K(\lambda - x) - \log|\lambda - x|\big)\,\mu_{\text{sc}}(d\lambda) \notag \\
        &= \int_{\mathbb{R}} \big(\log_\eta(\lambda - x) - \log|\lambda - x|\big)\,\mu_{\text{sc}}(d\lambda) - \int_{|\lambda - x| \geq K} (\log_\eta(\lambda - x) - \log_\eta^K(\lambda - x))\,\mu_{\text{sc}}(d\lambda) \label{Equation:Log_Bound_1} \\
        &= \dfrac{1}{2}\int_{\mathbb{R}} \log\Big(1 + \dfrac{\eta^2}{(\lambda - x)^2}\Big) \,\mu_{\text{sc}}(d\lambda) \label{Equation:Log_Bound_2} \\
        &\leq \dfrac{1}{2\pi} \int_{\mathbb{R}} \log\Big(1 + \dfrac{\eta^2}{(\lambda - x)^2}\Big) \,d\lambda = \eta, \label{Equation:Log_Bound_3}
    \end{align}
    where \eqref{Equation:Log_Bound_2} holds since the second integral in \eqref{Equation:Log_Bound_1} vanishes and \eqref{Equation:Log_Bound_3} by the density \eqref{Equation:Energy}. The lower bound in \eqref{Equation:Log_Bound} is direct from the expression \eqref{Equation:Log_Bound_2}.
\end{proof}

\subsection{Upper Bound}

\begin{proposition}\label{Theorem:Upper}
    If $\{J_I\}_{I \in [N]^p, N \in \mathbb{N}}$ satisfies \eqref{Equation:SG} with uniform $\kappa_{\text{SG}}$, then, for $u \in \mathbb{R}$, 
    \[\limsup_{N \to \infty}\dfrac{1}{N}\log \mathcal{L}_N(u) \leq \lim_{\delta \to 0} \sup_{x \leq 2u/E_\infty} \Big\{\Omega(x) + \limsup_{N \to \infty} \dfrac{1}{N}\log \ell_N(x, \delta)\Big\}.\]
\end{proposition}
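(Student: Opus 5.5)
We will bound $|\det(M_{N-1}^{J,g}(\sigma)-xI_{N-1})|=\exp\big((N-1)\int\log|\lambda-x|\,\mu_{N-1}^{J,g,\sigma}(d\lambda)\big)$ by $\exp\big((N-1)W_{\eta,N}^{J,g}(\sigma,x)\big)$, using $\log\le\log_\eta$. Then we separate the $g$-randomness, which enters only through the determinant, from the pair $(J,Z)$, which also drives $y_N^J$ and $x_N^{J,Z}$.

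\textbf{Step 1: truncation of the energy.} Fix $M$ large. Split $\mathcal{L}_N(u)$ according to whether $|x_N^{J,Z}(\sigma)|\le M$ or not. On the event $\{|x_N^{J,Z}|>M\}$, drop the factor $e^{-\frac{Nt}{2p(1-t)}y_N^J}\le 1$ and apply Cauchy--Schwarz. Combined with \eqref{Equation:Det_Bound} and \eqref{Equation:Concentration_H}, this contribution is at most $e^{\oldconstant{Constant:Det_Bound}N}e^{-\oldconstant{Constant:SG_H}NM^2/2}$. This is negligible once $M$ is large, provided the right-hand side of the claim is $>-\infty$. That condition is checked at $x=\min(2u/E_\infty,0)$: the quantity $\ell_N$ there is at least $e^{-CN}$ by Gaussianity of $Z$ together with $y_N^J\le\|\nabla H\|^2/N$, which has exponential moments. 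We then cover $[-M,\,2u/E_\infty+o(1)]$ by finitely many balls $B(x_i,\delta)$. Up to a factor equal to the number of balls (subexponential), it suffices to bound, for each $i$,
\[
\mathbb{E}\Big[e^{-\frac{Nt}{2p(1-t)}y_N^J}\,|\det(M-x_N^{J,Z}I)|\,\mathbbm{1}_{\{x_N^{J,Z}\in B(x_i,\delta)\}}\Big].
\]

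\textbf{Step 2: removing the dependence on $x$.} For $x\in B(x_i,\delta)$ we use that $\log_\eta$ is $(2\eta)^{-1}$-Lipschitz, which gives $W_{\eta,N}(\sigma,x)\le W_{\eta,N}(\sigma,x_i)+\delta/(2\eta)$. Now $W_{\eta,N}(\sigma,x_i)$ depends on $(J,g)$ and not on $Z$, but it is correlated with $y_N^J$ through $J$. Apply Hölder with exponents $q$ and $q'$ (with $q'$ close to $1$), splitting off the factor $e^{(N-1)q(W-\mathbb{E}W)}$. By \eqref{Equation:Concentration_Hessian_2} with $\lambda=q(N-1)$, its expectation is $\exp(O(q^2\log N/\eta^2))=e^{o(N)}$ uniformly in $\sigma$. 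The remaining factor is $\ell_N(x_i,\delta)^{1/q'}$, up to replacing the exponent $\frac{Nt}{2p(1-t)}$ by $q'$ times it. Cleaner alternative: because the concentration is superexponential, we can instead intersect with the event $\{W-\mathbb{E}W\le h\}$. Its complement has probability $\exp(-cN^2h^2\eta^2/\log N)$, which beats $e^{\oldconstant{Constant:Det_Bound}N}$ after Cauchy--Schwarz with \eqref{Equation:Det_Bound}. This avoids altering $\ell_N$ entirely, and it is the route I would take.

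\textbf{Step 3: identifying the mean.} It remains to show
\[
\limsup_N\sup_{\sigma}\mathbb{E}W_{\eta,N}(\sigma,x_i)\le\Omega(x_i)+\text{error}.
\]
Write $\log_\eta=\log_\eta^K+(\log_\eta-\log_\eta^K)$.
\begin{itemize}
\item The second piece is at most $\oldconstant{Constant:Upper_Truncation}M^2/K^2$ by \eqref{Equation:Upper_Truncation}.
\item The first piece is bounded and Lipschitz with constant $(2\eta)^{-1}$. By \eqref{Equation:Universal_Hessian_W1}, its integral against $\mathbb{E}\mu_{N-1}^{J,g,\sigma}$ converges uniformly to $\int\log_\eta^K(\lambda-x_i)\,\mu_{\rm sc}(d\lambda)$, which is at most $\Omega(x_i)+\eta$ by \eqref{Equation:Log_Bound}.
\end{itemize}
Collecting terms gives
\[
\limsup_N\frac1N\log\mathcal{L}_N(u)\le\max_i\Big\{\Omega(x_i)+\limsup_N\frac1N\log\ell_N(x_i,\delta)\Big\}+h+\frac{\delta}{2\eta}+\eta+\frac{CM^2}{K^2}.
\]
Finally, take $K\to\infty$, then $\delta\to0$ (using that $\Omega$ is continuous), then $h,\eta\to0$, and finally $M\to\infty$. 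Sending $\delta\to0$ before $\eta\to0$ requires care: $\ell_N(x,\delta)$ is monotone in $\delta$, so the limit in $\delta$ exists, and the $\delta/(2\eta)$ term vanishes in this order.

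The main obstacle is Step 2, namely decoupling the determinant from the weight $e^{-cNy_N^J}$ and the indicator, which share the disorder $J$. This works only because Lemma \ref{Lemma:Lip_Concentration} supplies concentration at speed $N^2/\log N$, faster than the exponential scale of the determinant. The orders of limits in $\delta$ and $\eta$ must also be arranged carefully.
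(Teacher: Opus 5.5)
Your proposal is correct and follows the paper's proof essentially step by step. The shared steps are: the energy truncation via Cauchy--Schwarz with \eqref{Equation:Det_Bound} and \eqref{Equation:Concentration_H}; the finite $\delta$-cover; the Lipschitz shift from $x_N^{J,Z}(\sigma)$ to $x_i$; the superexponential concentration of $W_{\eta,N}^{J,g}(\sigma,x_i)$; and the split $\log_\eta=\log_\eta^K+(\log_\eta-\log_\eta^K)$, handled by \eqref{Equation:Upper_Truncation}, \eqref{Equation:Universal_Hessian_W1} and \eqref{Equation:Log_Bound}. There are two cosmetic differences: you control the bad concentration event by Cauchy--Schwarz against \eqref{Equation:Det_Bound} rather than against \eqref{Equation:Concentration_Hessian_2}; and your check that the right-hand side is $>-\infty$ is unnecessary, since keeping the term $\max\{c-c'M^2,\cdot\}$ and letting $M\to\infty$, as the paper does, already covers the case $-\infty$.
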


\begin{proof}
    Let $\delta, \eta, h > 0$ and let $M \geq 2 + |2u/E_\infty|$, $K = M^2 \geq M + 2$ be parameters to be chosen sufficiently large. By compactness, there exists finitely many points $x_1, \dots, x_n \in [-M, 2u/E_\infty]$, including $x_n = 2u/E_\infty$, such that $[-M,2u/E_\infty] \subseteq \bigcup_{i=1}^n B(x_i,\delta)$. Consequently, $U_N(u) \cap [-M,M] \subseteq \bigcup_{i=1}^n B(x_i, \delta)$ for all sufficiently large $N$.
    
    Therefore, for all sufficiently large $N$, 
    \begin{align}
        &\mathbb{E}_{J, g, Z}\Big[e^{-\frac{Nt}{2p(1 - t)}y_N^J(\sigma)}\big|\det \big(M_{N - 1}^{J, g}(\sigma) - x_N^{J, Z}(\sigma)I_{N - 1}\big)\big| \cdot \mathbbm{1}_{\{x_N^{J, Z}(\sigma) \in U_N(u)\}}\Big] \notag \\
        &= \mathbb{E}_{J, g, Z}\Big[e^{-\frac{Nt}{2p(1 - t)}y_N^J(\sigma)} \cdot e^{(N - 1)\int_{\mathbb{R}} \log |\lambda - x_N^{J, Z}(\sigma)|\,\mu_{N - 1}^{J, g, \sigma}(d\lambda)} \mathbbm{1}_{\{x_N^{J, Z}(\sigma) \in U_N(u)\}}\Big] \notag \\
        &\leq \mathbb{E}_{J, g, Z}\Big[e^{(N - 1)\int_{\mathbb{R}} \log |\lambda - x_N^{J, Z}(\sigma)|\,\mu_{N - 1}^{J, g, \sigma}(d\lambda)}\mathbbm{1}_{\{|x_N^{J, Z}(\sigma)| \geq M\}}\Big] \label{Equation:Upper_1} \\
        &\hspace{1cm} + \sum_{i = 1}^n \mathbb{E}_{J, g, Z}\Big[e^{-\frac{Nt}{2p(1 - t)}y_N^J(\sigma)} \cdot e^{(N - 1)\int_{\mathbb{R}} \log |\lambda - x_N^{J, Z}(\sigma)|\,\mu_{N - 1}^{J, g, \sigma}(d\lambda)}\mathbbm{1}_{\{x_N^{J, Z}(\sigma) \in B(x_i, \delta)\}}\Big]. \label{Equation:Upper_2}
    \end{align}
    For the first term \eqref{Equation:Upper_1}, we see that there exists a constant $\newconstant\label{Constant:Upper_1} = \oldconstant{Constant:Upper_1}(\kappa_{\text{SG}}, p) > 0$ such that
    \begin{align}
        &\mathbb{E}_{J, g, Z}\Big[e^{(N - 1)\int_{\mathbb{R}} \log |\lambda - x_N^{J, Z}(\sigma)|\,\mu_{N - 1}^{J, g, \sigma}(d\lambda)}\mathbbm{1}_{\{|x_N^{J, Z}(\sigma)| \geq M\}}\Big] \notag \\
        &\leq \mathbb{E}_{J, g, Z}\Big[e^{2(N - 1)\int_{\mathbb{R}} \log |\lambda - x_N^{J, Z}(\sigma)|\,\mu_{N - 1}^{J, g, \sigma}(d\lambda)}\Big]^{1/2} \cdot \mathbb{P}_{J, Z}\big(|x_N^{J, Z}(\sigma)| \geq M\big)^{1/2} \notag \\
        &= \mathbb{E}_{J, g, Z}\big[\big|\det\big(M_{N - 1}^{J, g}(\sigma) - x_N^{J, Z}(\sigma)I_{N - 1}\big)\big|^2\big]^{1/2} \cdot \mathbb{P}_{J, Z}\big(|x_N^{J, Z}(\sigma)| \geq M\big)^{1/2} \notag \\
        &\leq e^{\oldconstant{Constant:Det_Bound}N} \cdot \sqrt{2}e^{-\oldconstant{Constant:SG_H}NM^2/2} \label{Equation:Upper_3} \\
        &\leq e^{-\oldconstant{Constant:Upper_1}NM^2}, \label{Equation:Upper_4}
    \end{align}
    for sufficiently large $M$, where the \eqref{Equation:Upper_3} follows from \eqref{Equation:Concentration_H} and \eqref{Equation:Det_Bound}. Next, let
    \begin{align*}
        A_i &= \int_{\mathbb{R}} \big(\log_\eta (\lambda - x_N^{J, Z}(\sigma)) - \log_\eta(\lambda - x_i)\big)\,\mu_{N - 1}^{J, g, \sigma}(d\lambda), \\
        B_i &= \int_{\mathbb{R}} \log_\eta(\lambda - x_i)\big(\mu_{N - 1}^{J, g, \sigma} - \mathbb{E}_{J, g}\big[\mu_{N - 1}^{J, g, \sigma}\big]\big)(d\lambda), \\
        C_i &= \int_{\mathbb{R}} \big(\log_\eta(\lambda - x_i) - \log_\eta^K(\lambda - x_i)\big)\,\mathbb{E}_{J, g}\big[\mu_{N - 1}^{J, g, \sigma}\big](d\lambda), \\
        D_i &= \int_{\mathbb{R}} \log_\eta^K(\lambda - x_i)\,\big(\mathbb{E}_{J, g}\big[\mu_{N - 1}^{J, g, \sigma}\big] - \mu_{\text{sc}}\big)(d\lambda) \\
        E_i &= \int_{\mathbb{R}} (\log_\eta^K(\lambda - x_i) - \log|\lambda - x_i|)\,\mu_{\text{sc}}(d\lambda).
    \end{align*}
    Then
    \begin{align}
       \int_{\mathbb{R}} \log |\lambda - x_N^{J, Z}(\sigma)|\,\mu_{N - 1}^{J, g, \sigma}(d\lambda) &\le \int_{\mathbb{R}} \log_\eta \big( \lambda - x_N^{J, Z}(\sigma) \big)\,\mu_{N - 1}^{J, g, \sigma}(d\lambda) \notag \\
        &= A_i + B_i + C_i + D_i + E_i + \Omega(x_i). \label{Equation:Upper_5}
    \end{align}
    From \eqref{Equation:Upper_1}, \eqref{Equation:Upper_2}, \eqref{Equation:Upper_4}, and \eqref{Equation:Upper_5}, it follows that
    \begin{align}
        &\mathbb{E}_{J, g, Z}\Big[e^{-\frac{Nt}{2p(1 - t)}y_N^J(\sigma)}\big|\det \big(M_{N - 1}^{J, g}(\sigma) - x_N^{J, Z}(\sigma)I_{N - 1}\big)\big| \cdot \mathbbm{1}_{\{x_N^{J, Z}(\sigma) \in U_N(u)\}}\Big] \notag \\
        &\leq e^{-\oldconstant{Constant:Upper_1}NM^2} + \sum_{i = 1}^n \mathbb{E}_{J, g, Z}\Big [e^{-\frac{Nt}{2p(1 - t)}y_N^J(\sigma)} e^{(N - 1)(A_i + B_i + C_i + D_i + E_i)} \mathbbm{1}_{\{x_N^{J, Z}(\sigma) \in B(x_i, \delta)\}} \Big]e^{(N - 1)\Omega(x_i)}. \label{Equation:Upper_6}
    \end{align}
    Now, we estimate the five terms in \eqref{Equation:Upper_5} individually.
    \begin{enumerate}
        \item[1$^\circ$]  On the event $\{x_N^{J,Z}(\sigma)\in B(x_i,\delta)\}$, since $\log_\eta$ is $(2\eta)^{-1}$-Lipschitz, we have
        \begin{equation}\label{Equation:A_Upper}
            A_i  \le \frac{\delta}{2\eta}.
        \end{equation}
        \item[2$^\circ$] For $C_i$, we see from \eqref{Equation:Upper_Truncation} that
        \begin{equation}\label{Equation:C_Upper}
            C_i \leq \sup_{\sigma, |x| \leq M, \eta > 0}\int_{\mathbb{R}} \big(\log_\eta(\lambda - x) - \log_\eta^K(\lambda - x)\big)\,\mathbb{E}_{J, g}\big[\mu_{N - 1}^{J, g, \sigma}\big](d\lambda) \leq \dfrac{\oldconstant{Constant:Upper_Truncation}M^2}{K^2} = \dfrac{\oldconstant{Constant:Upper_Truncation}}{M^2}.
        \end{equation}
        \item[3$^\circ$] For $D_i$, we see from \eqref{Equation:Universal_Hessian_W1} that
        \begin{equation}\label{Equation:D_Upper}
            D_i \leq \dfrac{\log_\eta(K)}{2\eta} \sup_{\sigma} d_{\text{BL}}\big(\mathbb{E}_{J, g}\big[\mu_{N - 1}^{J, g, \sigma}\big], \mu_{\text{sc}}\big) =:  r_N(\eta, K) =  o_{N}(1)
        \end{equation}
        for each fixed $\eta > 0$ and $K > 0$. 
        \item[4$^\circ$] From \eqref{Equation:Log_Bound}, we see that since $K \geq M + 2$ and $|x_i| \leq M$,
        \begin{equation}\label{Equation:E_Upper}
            E_i \leq \eta.
        \end{equation}
        \item[5$^\circ$] Let us consider the term $B_i = W_{\eta, N}^{J, g}(\sigma, x_i) - \mathbb{E}_{J, g}\big[W_{\eta, N}^{J, g}(\sigma, x_i)\big]$. From \eqref{Equation:Concentration_Hessian_1} and \eqref{Equation:Concentration_Hessian_2} it follows that there exists some $\newconstant\label{Constant:Upper_3} = \oldconstant{Constant:Upper_3}(\kappa_{\text{SG}}, p) > 0$ such that for sufficiently large $N$, 
        \begin{align}
            &\mathbb{E}_{J, g, Z}\Big[e^{-\frac{Nt}{2p(1 - t)}y_N^J(\sigma)} \cdot e^{(N - 1)B_i} \mathbbm{1}_{\{x_N^{J, Z}(\sigma) \in B(x_i, \delta)\}}\Big] \notag \\
            &\leq e^{(N - 1)h} \cdot \mathbb{E}_{J, Z}\Big[e^{-\frac{Nt}{2p(1 - t)}y_N^J(\sigma)} \mathbbm{1}_{\{x_N^{J, Z}(\sigma) \in B(x_i, \delta)\}}\Big] + \mathbb{E}_{J, g,  Z}\Big[e^{(N - 1)B_i} \mathbbm{1}_{\{|B_i| \geq h\}}\Big] \notag \\
            &\leq e^{(N - 1)h} \cdot \mathbb{E}_{J, Z}\Big[e^{-\frac{Nt}{2p(1 - t)}y_N^J(\sigma)} \mathbbm{1}_{\{x_N^{J, Z}(\sigma) \in B(x_i, \delta)\}}\Big] + \Big(\oldconstant{Constant:Concentration}^{-1}e^{\frac{(N - 1)^2\log N}{\oldconstant{Constant:Concentration}\eta^2N^2}} \cdot \oldconstant{Constant:Concentration}^{-1}e^{-\frac{\oldconstant{Constant:Concentration}\eta^2N^2h^2}{\log N}}\Big)^{1/2} \notag \\
            &\leq e^{(N - 1)h} \mathbb{E}_{J, Z}\Big[e^{-\frac{Nt}{2p(1 - t)}y_N^J(\sigma)} \mathbbm{1}_{\{x_N^{J, Z}(\sigma) \in B(x_i, \delta)\}}\Big] + \oldconstant{Constant:Upper_3}^{-1}e^{-\oldconstant{Constant:Upper_3}\eta^2N^2h^2/\log N} \label{Equation:B_Upper}.
        \end{align}
    \end{enumerate}
    Combining \eqref{Equation:Upper_6}, \eqref{Equation:A_Upper}, \eqref{Equation:C_Upper}, \eqref{Equation:D_Upper}, \eqref{Equation:E_Upper}, and \eqref{Equation:B_Upper},  
    and then averaging over $\sigma$, we obtain
    \begin{equation}\label{Equation:Upper_7}
        \mathcal{L}_N(u) \leq e^{-\oldconstant{Constant:Upper_1}NM^2} + e^{(N - 1)\big( \frac{\delta}{2\eta} + \eta + \frac{\oldconstant{Constant:Upper_Truncation}}{M^2} + r_N(\eta, K) \big)} \sum_{i = 1}^n e^{(N - 1)\Omega(x_i)}\big(e^{(N - 1)h}\ell_N(x_i, \delta) + a_N\big).
    \end{equation}
    where $a_N = \oldconstant{Constant:Upper_3}^{-1}e^{-\oldconstant{Constant:Upper_3}\eta^2N^2h^2/\log N}$. From the formula \eqref{Equation:SC_Potential}, we see that there exists an absolute constant $\newconstant\label{Constant:Det_Log_Bound} > 0$ such that $\Omega(x) \leq \oldconstant{Constant:Det_Log_Bound}x^2$ for all $x \in \mathbb{R}$. Therefore,
    \begin{equation}\label{Equation:Upper_9}
        a_N\sum_{i = 1}^n e^{(N - 1)\Omega(x_i)} \leq ne^{(N - 1)\oldconstant{Constant:Det_Log_Bound}M^2 - \oldconstant{Constant:Upper_3}\eta^2N^2h^2/\log N}
    \end{equation}
    Since the cover is finite and independent of $N$, \eqref{Equation:Upper_7}, \eqref{Equation:Upper_9}, and $r_N(\eta, K) \to 0$ imply that
    \begin{equation}\label{Equation:Upper_10}
        \begin{aligned}
            &\limsup_{N \to \infty} \frac{1}{N} \log\mathcal{L}_N(u) \\
            &\leq \max\Big\{-\oldconstant{Constant:Upper_1}M^2, \frac{\delta}{2\eta} + \eta + h + \frac{\oldconstant{Constant:Upper_Truncation}}{M^2} + \sup_{-M\leq x\leq 2u/E_\infty}\Big\{\Omega(x) + \limsup_{N\to\infty}\frac{1}{N}\log\ell_N(x,\delta)\Big\}\Big\}.
        \end{aligned}
    \end{equation}
    Here the term in \eqref{Equation:Upper_9} is superexponentially small for fixed $M, \eta, h > 0$. Letting $M \uparrow \infty$ in \eqref{Equation:Upper_10} gives
    \[\limsup_{N\to\infty}\frac{1}{N}\log\mathcal{L}_N(u) \leq \frac{\delta}{2\eta} + \eta + h + \sup_{x\leq 2u/E_\infty}\Big\{\Omega(x) + \limsup_{N\to\infty}\frac{1}{N}\log\ell_N(x,\delta)\Big\}.\]
    For fixed $\eta, h > 0$, let $\delta \downarrow 0$, and then let $\eta, h \downarrow 0$. The limit in $\delta$ exists in $[-\infty, \infty)$ because $\ell_N(x,\delta)$ is nonincreasing in $\delta$. This proves the proposition.
\end{proof}

\subsection{Lower Bound}

\begin{proposition}\label{Theorem:Lower}
    If $\{J_I\}_{I \in [N]^p, N \in \mathbb{N}}$ satisfies \eqref{Equation:SG} with uniform $\kappa_{\text{SG}}$, then, for $u \in \mathbb{R}$, 
    \[\liminf_{N \to \infty}\dfrac{1}{N}\log \mathcal{L}_N(u) \geq \sup_{x < 2u/E_\infty} \Big\{\Omega(x) + \lim_{\delta \to 0}\liminf_{N \to \infty} \dfrac{1}{N}\log \ell_N(x, \delta)\Big\}.\]
\end{proposition}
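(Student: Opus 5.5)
Fix $x<2u/E_\infty$ and small $\delta>0$ with $B(x,\delta)\subseteq(-\infty,2u/E_\infty)$, so that $B(x,\delta)\subseteq U_N(u)$ for all large $N$. Restricting the expectation in $\mathcal{L}_N(u)$ to $\{x_N^{J,Z}(\sigma)\in B(x,\delta)\}$ gives a lower bound. It then suffices to show that, on a conditional event of high probability given $(J,Z,\sigma)$, we have
\[\frac{1}{N-1}\log|\det(M_{N-1}^{J,g}(\sigma)-x_N^{J,Z}(\sigma)I_{N-1})|\ge\Omega(x)-\varepsilon.\]
After that we take $\delta\to0$, $\varepsilon\to0$, and finally the supremum over $x$.

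The difficulty compared with the upper bound is that $\log|\cdot|$ is unbounded below, so we cannot simply replace it by $\log_\eta$. The plan is to work conditionally on $J$ (and $Z$, $\sigma$), using only the randomness of $g$. Given $J$, the matrix $M_{N-1}^{J,g}(\sigma)$ equals a deterministic matrix $A$ plus $\sqrt{1-t}\,\mathrm{GOE}_{N-1}$, with $t<1$ essential here. We split
\[\int\log|\lambda-y|\,d\mu=\int\log_\eta(\lambda-y)\,d\mu-\int(\log_\eta-\log|\cdot|)(\lambda-y)\,d\mu.\]

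\emph{First term.} Lower bound it by $\int\log_\eta^K$, handled exactly as $A_i,B_i,D_i,E_i$ in Proposition \ref{Theorem:Upper} but with reversed inequalities. Use $\log_\eta\ge\log_\eta^K$, the Lipschitz bound for $|y-x|<\delta$, concentration \eqref{Equation:Concentration_Hessian_1}, universality \eqref{Equation:Universal_Hessian_W1}, and $\int\log_\eta^K\,d\mu_{\mathrm{sc}}\ge\Omega$ from \eqref{Equation:Log_Bound}. The error is then at most $\delta/(2\eta)+h+r_N(\eta,K)$, except on a $(J,g)$-event of superexponentially small probability $e^{-cN^2/\log N}$.

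\emph{Second term (main obstacle).} We need $\int(\log_\eta-\log|\cdot|)\,d\mu_{N-1}^{J,g,\sigma}\le\varepsilon$ with conditional probability not exponentially small, uniformly in $J$. Lemma \ref{Lemma:Truncation_of_Log}, applied conditionally on $J$ with $A_N$ deterministic (after rescaling the GOE by $\sqrt{1-t}$, which only changes the Wegner constant), bounds its $g$-expectation by $C\eta/\sqrt{1-t}$ uniformly in $A$ and $y$. Since the quantity is nonnegative, Markov's inequality gives
\[\mathbb{P}_g\Big(\int(\log_\eta-\log|\cdot|)\,d\mu\le\varepsilon\Big)\ge\tfrac12\]
once $\eta\le c\varepsilon\sqrt{1-t}$. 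A constant probability costs nothing at exponential scale.

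\emph{Combining.} On the intersection of the good $g$-event (conditional probability at least $1/2-o(1)$, the superexponential bad set being discarded because $\ell_N$ is at worst exponentially small) and $\{x_N^{J,Z}\in B(x,\delta)\}$, we obtain
\[|\det|\ge e^{(N-1)(\Omega(x)-\varepsilon-\eta-\delta/(2\eta)-h-r_N)}.\]
Hence
\[\mathcal{L}_N(u)\ge\tfrac14 e^{(N-1)(\Omega(x)-\cdots)}\ell_N(x,\delta)-(\text{superexponential}).\]
The subtraction is harmless if $\ell_N$ decays at most exponentially. If not, the right-hand side of the proposition is $-\infty$ for that $x$ and there is nothing to prove.

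To carry this out cleanly, I would bound $\mathbb{P}_{J,g}(\text{bad}\cap\ldots)\le e^{-cN^2/\log N}$ and use $e^{-\frac{Nt}{2p(1-t)}y}\le1$. Then
\[\mathbb{E}\big[e^{-\ldots}\mathbbm{1}_{\text{good}}\mathbbm{1}_{B}\big]\ge\tfrac12\ell_N-e^{-cN^2/\log N}.\]
Taking $\liminf\frac1N\log$, then $\delta\to0$ with $\eta,h$ fixed, then $\eta,h,\varepsilon\to0$ and $K\to\infty$, gives the claimed bound.
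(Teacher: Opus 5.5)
Your proposal is correct. Its overall skeleton matches the paper:
\begin{itemize}
\item restricting to $\{x_N^{J,Z}(\sigma)\in B(x,\delta)\}$;
\item the Lipschitz shift from $x_N^{J,Z}$ to $x$;
\item concentration, and universality via $\log_\eta^K$;
\item the Wegner-based Lemma~\ref{Lemma:Truncation_of_Log};
\item subtracting a superexponentially small bad set using $e^{-\frac{Nt}{2p(1-t)}y_N^J}\le 1$.
\end{itemize}
The difference is in how you remove the singular part of $\log|\cdot|$ and the $g$-randomness.

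The paper conditions on $(\sigma,J,Z)$ and applies Jensen's inequality in $g$ to $e^{(N-1)\int\log|\lambda-x_N^{J,Z}(\sigma)|\,d\mu_{N-1}^{J,g,\sigma}}$. This has three effects: the $g$-fluctuation term disappears exactly because it has $\mathbb{E}_g$-mean zero, the nonnegative truncation term is simply dropped, and Lemma~\ref{Lemma:Truncation_of_Log} is used only through $\mathbb{E}_g$. What remains is the $J$-fluctuation of $J\mapsto\mathbb{E}_g[W_{\eta,N}^{J,g}(\sigma,x)]$. For this the paper re-runs the argument of Lemma~\ref{Lemma:Lip_Concentration} on the $g$-averaged statistic.

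You instead turn the same expectation bound into a conditional event of $g$-probability at least $1/2$ via Markov's inequality. This is valid because $\log_\eta-\log|\cdot|\ge 0$, and the bound is uniform both in the deterministic part of the matrix and in the $g$-independent point $x_N^{J,Z}(\sigma)$. You then control the $\log_\eta$ statistic with the joint $(J,g)$ bound \eqref{Equation:Concentration_Hessian_1} exactly as stated.

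The trade-off is this. Jensen costs nothing in probability and bounds the conditional expectation directly. Your route loses a factor $1/2$, which is harmless at exponential scale. In exchange it is pathwise, giving an explicit lower bound on $|\det|$ on an event, and it uses Lemma~\ref{Lemma:Lip_Concentration} verbatim.

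Two cosmetic remarks:
\begin{itemize}
\item The $-\eta$ in your exponent is unnecessary. Once $K\ge|x|+2$, \eqref{Equation:Log_Bound} already gives $\int\log_\eta^K\,d\mu_{\mathrm{sc}}\ge\Omega(x)$.
\item For $u<0$ you should take $x+\delta<2u/E_\infty$ strictly, so that $B(x,\delta)\subseteq U_N(u)$ for large $N$.
\end{itemize}
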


\begin{proof}
    Fix $h > 0$ and let $M \geq 1 + |2u/E_\infty|$, $K \geq M + 2$ be parameters to be chosen sufficiently large. Fix $-M \leq x < 2u/ E_\infty$. Choose $\epsilon > 0$ such that $x + \epsilon < 2u/E_\infty$ and let $0 < \delta < \epsilon$.  Then for sufficiently large $N$, $B(x, \delta) \subseteq U_N(u).$  Let $\eta = \eta(h, t)>0$  be chosen later.

    We introduce the following notations
    \begin{align*}
        A &= \int_{\mathbb{R}} \big(\log |\lambda - x_N^{J, Z}(\sigma)| - \log_\eta(\lambda -  x_N^{J, Z}(\sigma))\big)\,\mu_{N - 1}^{J, g, \sigma}(d\lambda), \\
        B &= \int_{\mathbb{R}}\big(\log_\eta(\lambda - x_N^{J, Z}(\sigma)) - \log_\eta(\lambda - x)\big)\,\mu_{N - 1}^{J, g, \sigma}(d\lambda), \\
        C &= \int_{\mathbb{R}}\log_\eta(\lambda - x)\big(\mu_{N - 1}^{J, g, \sigma} - \mathbb{E}_g\big[\mu_{N - 1}^{J, g, \sigma}\big]\big)(d\lambda), \\
        D &= \int_{\mathbb{R}}\log_\eta(\lambda - x)\big(\mathbb{E}_g\big[\mu_{N - 1}^{J, g, \sigma}\big] - \mathbb{E}_{J, g}\big[\mu_{N - 1}^{J, g, \sigma}\big]\big)(d\lambda), \\
        E &= \int_{\mathbb{R}} \big(\log_\eta(\lambda - x) - \log_\eta^K(\lambda - x)\big)\,\mathbb{E}_{J, g}\big[\mu_{N - 1}^{J, g, \sigma}\big](d\lambda), \\
        F &= \int_{\mathbb{R}} \log_\eta^K(\lambda - x)\,\big(\mathbb{E}_{J, g}\big[\mu_{N - 1}^{J, g, \sigma}\big] - \mu_{\text{sc}}\big)(d\lambda).
    \end{align*}

    Then
    \begin{equation}\label{Equation:Lower_Decomposition}
        \int_{\mathbb R}\log|\lambda- x_N^{J,Z}(\sigma)| \mu_{N-1}^{J,g,\sigma}(d \lambda) = A + B + C + D + E + \int_{\mathbb R}\log_\eta^K(\lambda - x) \mu_{\mathrm{sc}}(d\lambda).
    \end{equation}
    Conditioning on $(\sigma, J, Z)$, the terms $D$ and $E$ do not depend on $g$, while $\mathbb E_g[C] = 0$. Moreover, $E \geq 0$ since $\log_\eta(\lambda - x) - \log_\eta^K(\lambda - x) \geq 0$. Hence, by Jensen's inequality, 
    \[\mathbb{E}_g\big[e^{(N-1)(A + B + C + D + E + F)}\big] \geq \exp\big((N - 1) \big(\mathbb E_g[A+B]+D+ F \big) \big).\]
    Moreover, since $K \geq M + 2$ and $|x| \leq M$, we have by \eqref{Equation:Log_Bound} that
    \[\int_{\mathbb R}\log_\eta^K(\lambda-x) \mu_{\mathrm{sc}}(d\lambda) \geq \int_{\mathbb R}\log|\lambda-x| \mu_{\mathrm{sc}}(d\lambda) = \Omega(x).\]
    Therefore, for every fixed $\sigma$ and for all sufficiently large $N$, 
    \begin{align}
        &\mathbb{E}_{J, g, Z}\Big[e^{-\frac{Nt}{2p(1 - t)}y_N^J(\sigma)}\big|\det \big(M_{N - 1}^{J, g}(\sigma) - x_N^{J, Z}(\sigma)I_{N - 1}\big)\big|\cdot \mathbbm{1}_{\{x_N^{J, Z}(\sigma) \in U_N(u)\}}\Big] \notag \\
        &\geq e^{(N - 1)\Omega(x)} \mathbb{E}_{J,  Z}\Big[e^{-\frac{Nt}{2p(1 - t)}y_N^J(\sigma)} \cdot e^{(N - 1)(\mathbb{E}_g[A + B] + D + F)} \cdot \mathbbm{1}_{\{x_N^{J, Z}(\sigma) \in B(x, \delta)\}}\Big] , \label{Equation:Lower_1}
    \end{align}
    Now, we estimate the four terms in \eqref{Equation:Lower_1} individually.
    \begin{enumerate}
        \item[1$^\circ$] For $A$, we see from Lemma \ref{Lemma:Truncation_of_Log} that uniform over deterministic realizations $J \in \mathbb{R}^{[N]^p}$, $\sigma \in \sqrt{N}\mathbb{S}^{N - 1}$ and $x \in \mathbb{R}$, there exists $\eta = \eta(h, t) > 0$ so small so that
        \begin{equation}\label{Equation:A_Lower_Pre}
            \sup_{N \geq 2} \sup_{J, \sigma, x}  
            \int_{\mathbb{R}} \bigl( \log_\eta(\lambda-x)-\log|\lambda-x| \bigr) \mathbb{E}_g\bigl[\mu_{N-1}^{J,g,\sigma}\bigr](d\lambda) \leq h.
        \end{equation} 
        Taking $x = x_N^{J,Z}(\sigma)$ in \eqref{Equation:A_Lower_Pre} then gives
        \begin{equation} \label{Equation:A_Lower}
            \mathbb E_g[A] = -\int_{\mathbb R} \bigl( \log_\eta(\lambda-x_N^{J,Z}(\sigma)) - \log|\lambda-x_N^{J,Z}(\sigma)|\bigr) \mathbb E_g\bigl[\mu_{N-1}^{J,g,\sigma}\bigr](d\lambda) \geq -h.
        \end{equation}

        \item[2$^\circ$] On the event $\{x_N^{J, Z}(\sigma) \in B(x, \delta)\}$, since $\log_\eta$ is $(2\eta)^{-1}$-Lipschitz, we have
        \begin{equation}\label{Equation:B_Lower}
             B \geq -\dfrac{\delta}{2\eta}.
        \end{equation}
        
        \item[3$^\circ$] For $F$, we see from \eqref{Equation:Universal_Hessian_W1} that
        \begin{equation}\label{Equation:F_Lower}
            F \geq -\dfrac{\log_\eta(K)}{2\eta} \sup_{\sigma} d_{\text{BL}}\big(\mathbb{E}_{J, g}\big[\mu_{N - 1}^{J, g, \sigma}\big], \mu_{\text{sc}}\big) = -r_N(\eta, K) = o_{N}(1)
        \end{equation}
        for each fixed $\eta > 0$ and $K > 0$.
    
        \item[4$^\circ$] Finally, we analyze the term $D$. Applying the same argument as in Lemma \ref{Lemma:Lip_Concentration} to the mapping $J \mapsto \mathbb{E}_g\big[W_{\eta, N}^{J, g}(\sigma, x)\big]$, we obtain the same concentration estimate as in \eqref{Equation:Concentration_Hessian_1}. More precisely, for all $h \geq 0$,
        \[\sup_{\sigma, x} \mathbb{P}_J(|D| \geq h) \leq \oldconstant{Constant:Concentration}^{-1}\exp\Big(-\frac{\oldconstant{Constant:Concentration}\eta^2N^2h^2}{\log N}\Big).\]
        Therefore,
        \begin{align}
            &\mathbb{E}_{J, Z}\Big[e^{-\frac{Nt}{2p(1 - t)}y_N^J(\sigma)} \cdot e^{(N - 1)D} \cdot \mathbbm{1}_{\{x_N^{J, Z}(\sigma) \in B(x, \delta)\}}\Big] \notag \\
            &\geq e^{-(N - 1)h} \cdot \mathbb{E}_{J, Z}\Big[e^{-\frac{Nt}{2p(1 - t)}y_N^J(\sigma)} \cdot \mathbbm{1}_{\{x_N^{J, Z}(\sigma) \in B(x, \delta)\}} \cdot \mathbbm{1}_{\{|D| \leq h\}}\Big] \notag \\
            &\geq e^{-(N - 1)h}\Big(\mathbb{E}_{J, Z}\Big[e^{-\frac{Nt}{2p(1 - t)}y_N^J(\sigma)} \cdot \mathbbm{1}_{\{x_N^{J, Z}(\sigma) \in B(x, \delta)\}}\Big] - \oldconstant{Constant:Concentration}^{-1}e^{-\oldconstant{Constant:Concentration}\eta^2N^2h^2/\log N}\Big). \label{Equation:D_Lower}
        \end{align}  
    \end{enumerate}
    Combining \eqref{Equation:Lower_1}, \eqref{Equation:A_Lower}, \eqref{Equation:B_Lower}, \eqref{Equation:F_Lower}, and \eqref{Equation:D_Lower}, and then averaging over $\sigma$, gives
    \begin{equation}\label{Equation:Lower_Main}
        \mathcal{L}_N(u) \geq \exp\Big[(N - 1)\Big(\Omega(x)-2h-\frac{\delta}{2\eta} - r_N(\eta, K) \Big)\Big]\big(\ell_N(x,\delta) - b_N\bigr),
    \end{equation}
    where $b_N = \oldconstant{Constant:Concentration}^{-1}e^{-\oldconstant{Constant:Concentration}\eta^2N^2h^2/\log N}$. Set
    \[\ell(x,\delta) := \liminf_{N\to\infty}\frac{1}{N}\log \ell_N(x,\delta).\]
    We claim that
    \begin{equation}\label{Equation:Lower_Fixed_Parameters}
        \liminf_{N\to\infty}\frac{1}{N}\log\mathcal{L}_N(u) \geq \Omega(x)-2h-\frac{\delta}{2\eta} + \ell(x,\delta).  
    \end{equation}
    If $\ell(x,\delta) = -\infty$, the desired lower bound for this fixed $x$ and $\delta$ is automatic. Suppose instead that $\ell(x,\delta) > -\infty$. Then, for every $\rho>0$ and all sufficiently large $N$,
    \[\ell_N(x,\delta)\ge \exp\bigl(N(\ell(x,\delta) - \rho)\bigr).\]
    Since $b_N$ is superexponentially small, $\ell_N(x,\delta) - b_N > 0$ for all sufficiently large $N$, and
    \[\liminf_{N\to\infty}\frac{1}{N} \log\bigl(\ell_N(x,\delta) - b_N\bigr) = \liminf_{N\to\infty}\frac{1}{N}\log \ell_N(x,\delta) = \ell(x,\delta).\]
    Hence, Claim \eqref{Equation:Lower_Fixed_Parameters} now follows from \eqref{Equation:Lower_Main}. For the fixed $h>0$ and the corresponding fixed $\eta = \eta(h,t)>0$, let $\delta\downarrow0$ in \eqref{Equation:Lower_Fixed_Parameters}. This yields
    \[\liminf_{N\to\infty}\frac{1}{N}\log\mathcal L_N(u) \geq \Omega(x) - 2h + \lim_{\delta\downarrow 0} \ell(x, \delta)\]
    Next let $h\downarrow0$. Since $x<2u/E_\infty$ was arbitrary, taking the supremum over such $x$ proves the proposition.
\end{proof}

\section{Convergence of Joint Moment Generating Function}\label{Subsection:MGF}

\noindent Set the random variable
\begin{equation}\label{Equation:Def_X}
    X_N^J(\sigma) = \frac{H_N^J(\sigma)}{\sqrt{N(N-1)}}
\end{equation}
and recall $y_N^J(\sigma)$ from \eqref{Equation:Def_y}. Consider the joint moment generating function
\begin{align}
    \Lambda_N(\alpha, \beta) &:= \dfrac{1}{N}\log\mathbb{E}_{\sigma, J} \big[\exp\big(N\big\langle (\alpha, \beta), (X_N^J(\sigma), y_N^J(\sigma))\big\rangle\big)\big] \notag \\
    &= \dfrac{1}{N}\log\mathbb{E}_{\sigma, J}\Big[\exp\Big(\alpha \sqrt{\frac{N}{N - 1}}H_N^J(\sigma) + \beta \|B(\sigma)^\top\nabla H_N^J(\sigma)\|_2^2\Big)\Big]. \label{Equation:MGF_H_Grad}
\end{align}
The main result of this section is the following convergence:

\begin{theorem}
    If $p \geq 3$ and $\{J_I\}_{I \in [N]^p, N \in \mathbb{N}}$ satisfies \eqref{Equation:SSG}, then 
    \begin{equation}\label{Equation:Convergence_of_MGF}
        \Lambda(\alpha, \beta) := \lim_{N \to \infty} \Lambda_N(\alpha, \beta) = \begin{cases}
            \frac{\alpha^2}{2} - \frac{1}{2}\log(1 - 2\beta p), & \beta < \frac{1}{2p}, \\
            \infty, & \beta \geq \frac{1}{2p}.
        \end{cases}
    \end{equation}
\end{theorem}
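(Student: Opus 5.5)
The plan is to compare with the Gaussian case, where everything is explicit, and to transfer the estimate to $J$ using strict subgaussianity for the upper bound and delocalization of the coefficients for the lower bound.

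\textbf{Gaussian benchmark.} For fixed $\sigma$ and Gaussian disorder, $H_N^G(\sigma)\sim\texttt{Normal}(0,N)$. It is independent of $B(\sigma)^\top\nabla H_N^G(\sigma)\sim\texttt{Normal}(0,pI_{N-1})$ by Proposition \ref{Proposition:Conditional_Covariance}. Hence for $\beta<1/(2p)$,
\[\mathbb{E}_G\Big[e^{\alpha\sqrt{N/(N-1)}H_N^G+\beta\|B^\top\nabla H_N^G\|_2^2}\Big]=e^{\alpha^2N^2/(2(N-1))}(1-2\beta p)^{-(N-1)/2},\]
which yields \eqref{Equation:Convergence_of_MGF} directly. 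For general $J$, the case $\beta\ge 1/(2p)$ reduces to the lower bound for $\beta<1/(2p)$. Indeed, $\Lambda_N$ is nondecreasing in $\beta$, so $\liminf_N\Lambda_N(\alpha,\beta)\ge\lim_{\beta'\uparrow 1/(2p)}\Lambda(\alpha,\beta')=\infty$.

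\textbf{Upper bound for $0\le\beta<1/(2p)$.} Linearize the quadratic term with an auxiliary $\xi\sim\texttt{Normal}(0,I_{N-1})$:
\[e^{\beta\|v\|_2^2}=\mathbb{E}_\xi\big[e^{\sqrt{2\beta}\langle\xi,v\rangle}\big],\qquad v=B(\sigma)^\top\nabla H_N^J(\sigma).\]
Conditionally on $(\sigma,\xi)$, the exponent is then a linear form $\sum_I c_I(\sigma,\xi)J_I$. By \eqref{Equation:SSG} and independence of the $J_I$,
\[\mathbb{E}_J\big[e^{\sum_I c_IJ_I}\big]\le e^{\frac12\sum_Ic_I^2}=\mathbb{E}_G\big[e^{\sum_Ic_IG_I}\big].\]
Undoing the linearization gives $\Lambda_N^J(\alpha,\beta)\le\Lambda_N^G(\alpha,\beta)$ exactly, for every $N$. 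This is the only place where $\kappa_{\mathrm{SG}}=1$ is genuinely needed.

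\textbf{Lower bound for $0\le\beta<1/(2p)$.} Restrict to a set of $(\sigma,\xi)$ of probability close to one in the tilted sense, on which $\max_i|\sigma_i|\le C\sqrt{\log N}$ and $\|\xi\|_\infty\le C\sqrt{\log N}$. On this set every coefficient satisfies $|c_I|\le N^{-(p-1)/2}\,\mathrm{polylog}(N)$. We also have $\sum_Ic_I^2=O(N)$.

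Unit variance and the uniform moment bounds then give
\[\log\mathbb{E}\,e^{c_IJ_I}\ge\tfrac12c_I^2-C|c_I|^3.\]
Summing over $I$, the cubic errors contribute $O\big(N\max_I|c_I|\big)=o(N)$ in total. One must check that the Gaussian-tilted measure on $(\sigma,\xi)$ still gives the restricted set exponentially non-negligible mass. Under the tilt, $\xi$ is shifted in a controlled way, and $\sigma$ stays uniform because the Gaussian answer does not depend on $\sigma$. With this, $\liminf_N\Lambda_N\ge\Lambda$.

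\textbf{The case $\beta<0$ (main obstacle).} Here the Hubbard--Stratonovich trick produces an oscillatory integral, so strict subgaussianity cannot be applied directly. Moreover, the target $\frac{\alpha^2}{2}-\frac12\log(1+2|\beta|p)$ encodes the large deviation that the gradient is small, which is not a tail event.

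For the upper bound, I would first tilt $J$ by the linear $\alpha$-term and then bound the quadratic Laplace transform $\mathbb{E}\,e^{-|\beta|\|AJ\|_2^2}$. One route is a Brascamp--Lieb or log-concave-comparison inequality. Alternatively, one can write $\|AJ\|_2^2=\sup_w\{2\langle w,AJ\rangle-\|w\|_2^2\}$ and use the \eqref{Equation:SSG} upper bound on a net of $w$.

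For the lower bound, I would use a Lindeberg replacement of $J_I$ by $G_I$, one coordinate at a time, applied to the smooth functional $F(J)=e^{\alpha\langle a,J\rangle-|\beta|\|AJ\|_2^2}$. I would control third derivatives relative to $F$ itself, so that errors are multiplicative. The delocalization bound $|c_I|\lesssim N^{-(p-1)/2}\mathrm{polylog}(N)$ should make the accumulated relative error over the $N^p$ swaps $e^{o(N)}$. I expect this relative-error control, together with the Brascamp--Lieb upper bound, to be the delicate part.
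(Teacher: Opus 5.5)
Your treatment of $\beta\ge 0$ is essentially the paper's. Strict subgaussianity after Gaussian linearization gives $\Lambda_N^J\le\Lambda_N^G$ exactly. The lower bound is a cubic Taylor remainder on a delocalized set. Monotonicity in $\beta$ correctly handles $\beta\ge 1/(2p)$; the paper instead lets $M\to\infty$ in $\int_{-M}^M e^{(\beta p-1/2)x^2}\,dx$.

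One small repair is needed. A bound on $\|\xi\|_\infty$ does not control $\|B(\sigma)\xi\|_\infty$, which is what enters $\langle b_I(\sigma),\xi\rangle$. So your claim $|c_I|\le N^{-(p-1)/2}\,\mathrm{polylog}(N)$ is not justified as stated. The paper uses the crude bound $\|B(\sigma)g\|_\infty\le\|g\|_2=O(\sqrt N)$, which gives $|\langle b_I,g\rangle|=O((\log N)^{p-1}N^{-(p-2)/2})$. This is $o(1)$ precisely because $p\ge 3$. Also, the tilt on $\xi$ is a rescaling, not a shift, since $\sum_I a_Ib_I=0$.

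The genuine gap is the upper bound for $\beta=-\gamma<0$. You list candidate routes, but none works as stated.

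The variational route cannot produce decay. From $e^{-\gamma\|AJ\|_2^2}=\inf_w e^{-\gamma(2\langle w,AJ\rangle-\|w\|_2^2)}$, the only way to move $\mathbb{E}_J$ past the infimum is $\mathbb{E}\inf_w\le\inf_w\mathbb{E}$. Already for Gaussian $J$ the right side is minimized at $w=0$, which returns $e^{\alpha^2\sum_I a_I^2/2}$ without the factor $(1+2\gamma p)^{-(N-1)/2}$; a net changes nothing. What you need is a small-ball estimate for $AJ$, and no upper bound on real moment generating functions provides one. Brascamp--Lieb or log-concave comparison in $J$-space is also unavailable, e.g.\ for Rademacher disorder.

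The missing idea is to keep the oscillatory representation you set aside. The identity $e^{-\gamma\|v\|_2^2}=\mathbb{E}_g[e^{i\sqrt{2\gamma}\langle v,g\rangle}]$ turns the quantity into $\mathbb{E}_g\prod_I M_{J_I}(\alpha a_I+i\sqrt{2\gamma}\langle b_I,g\rangle)$. No tilting of $J$ is needed, because the linear term is the real part of the argument. Strict subgaussianity then yields the complex bound $|M_X(x+iy)|\le\exp(\frac{x^2-y^2}{2}+cx^4+h(y))$ with $h(y)=\min\{y^2/2,cy^4\}$. Near the origin this comes from a Taylor expansion using $\mathbb{E}X^3=0$; for large $|y|$ it comes from $|M_X(x+iy)|\le M_X(x)\le e^{x^2/2}$. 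The Gaussian parts cancel by the covariance identities. The Gaussian Brascamp--Lieb inequality, applied in $g$-space via $\sum_I b_Ib_I^\top=\tau_N^2 I$, then bounds the leftover $\mathbb{E}_g\exp(\sum_I h(\xi_N\langle b_I,g\rangle))$ by $e^{C\Delta_N}$, where $\Delta_N=\sum_I(a_I^4+\|b_I\|_2^4)$. In particular, your remark that $\kappa_{\mathrm{SG}}=1$ is needed only for $\beta\ge 0$ is wrong: this is where it does its essential work.

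Second, the upper bound must survive averaging over all $\sigma$, and your sketch never addresses localized $\sigma$. Consider the event $|\sigma_1|\ge N^{1/2-\delta}$, which has probability $e^{-O(N^{1-2\delta})}$ and so is not exponentially small. On it, the trivial bound $\mathbb{E}_J[\cdot]\le e^{\alpha^2N^2/(2(N-1))}$ overshoots the target by $(1+2\gamma p)^{(N-1)/2}$. The quartic error is also useless there, because $a_{(1,\dots,1)}(\sigma)^4\ge N^{2-4p\delta}\gg N$ for small $\delta$.

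The paper handles this in four steps:
\begin{itemize}
\item It passes to $\|B_D(\sigma)^\top\nabla H_N^J\|_2\le\|B(\sigma)^\top\nabla H_N^J\|_2$, which sees only coordinates with $|\sigma_i|<\log N$.
\item It splits $H_N^J=H_N^{\mathrm{hi}}+H_N^{\mathrm{lo}}$ and bounds $\mathbb{E}_Je^{\alpha H_N^{\mathrm{hi}}}$ by strict subgaussianity.
\item It applies the Fourier/Brascamp--Lieb bound to the low-influence part, uniformly in the independent shift $v=B_D(\sigma)^\top\nabla H_N^{\mathrm{hi}}$; there $\Delta_N=o(N)$ uniformly in $\sigma$.
\item It averages $(1+2\gamma p(1-\eta^{p-1}))^{-d_N/2}$ over $\sigma$ with Beta-distribution estimates, where $1+2\gamma p(1-x^{p-1})\ge(1+2\gamma p)(1-x)$ makes the lost directions affordable.
\end{itemize}

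Your Lindeberg lower bound for $\beta<0$ is a genuinely different and plausible route, since only delocalized $\sigma$ matter there. However, making the swap errors multiplicative requires moment control of $\langle b_I,AJ\rangle$ under the $F$-tilted hybrid laws, which you have not supplied. The paper instead reuses the Fourier representation with a fourth-cumulant expansion and the same $L^q$ bound.
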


\subsection{Auxiliary Bounds}
\noindent
For a random variable $X$ satisfying \eqref{Equation:SG}, we define the complex moment generating function by
\[M_X(z) = \mathbb{E}\big[e^{zX}\big], \quad \forall z \in \mathbb{C}.\]
Note that subgaussianity readily ensures that $M_X$ is entire by the dominated convergence theorem.

\begin{lemma}
    If $X$ satisfies \eqref{Equation:SSG}, then there exists absolute constant $\newconstant\label{Constant:MGF_Bound} > 0$ such that for all $x, y \in \mathbb{R}$,
    \begin{equation}\label{Equation:SSG_MGF}
        |M_X(x + iy)| \leq \exp\Big(\dfrac{x^2 - y^2}{2} + \oldconstant{Constant:MGF_Bound}x^4 + h(y)\Big),
    \end{equation}
    where $h(y) = \min\{y^2/2, \oldconstant{Constant:MGF_Bound}y^4\}$.
\end{lemma}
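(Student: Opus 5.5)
The plan is to combine two bounds: a trivial global bound, and a local Taylor bound near the origin that uses the vanishing of the third moment forced by \eqref{Equation:SSG}. The constant $\oldconstant{Constant:MGF_Bound}$ will be chosen large at the end so that the two regimes cover all of $\mathbb{C}$.

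\emph{Global bound.} Since $|e^{(x+iy)X}| = e^{xX}$, \eqref{Equation:SSG} gives $|M_X(x+iy)| \leq \mathbb{E}[e^{xX}] \leq e^{x^2/2} = \exp\big(\frac{x^2-y^2}{2} + \frac{y^2}{2}\big)$. This is the desired inequality whenever $h(y) = y^2/2$, i.e.\ whenever $\oldconstant{Constant:MGF_Bound}y^4 \geq y^2/2$, equivalently $y^2 \geq 1/(2\oldconstant{Constant:MGF_Bound})$. More generally, the global bound suffices as soon as
\[
\frac{y^2}{2} - \min\Big\{\frac{y^2}{2}, \oldconstant{Constant:MGF_Bound}y^4\Big\} \leq \oldconstant{Constant:MGF_Bound}x^4 .
\]
The left side is at most $\sup_y (y^2/2 - \oldconstant{Constant:MGF_Bound}y^4) = 1/(16\oldconstant{Constant:MGF_Bound})$. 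Hence the global bound also covers every $|x| \geq r_0/2$, provided $\oldconstant{Constant:MGF_Bound}^2 r_0^4 \geq 1$. Here $r_0$ is an absolute radius fixed in the next step.

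\emph{Local bound.} Expanding $\mathbb{E}[e^{\lambda X}] = 1 + \lambda^2/2 + \mathbb{E}[X^3]\lambda^3/6 + O(\lambda^4)$ and comparing with $e^{\lambda^2/2} = 1 + \lambda^2/2 + O(\lambda^4)$ for $\lambda \to 0^{\pm}$ forces $\mathbb{E}[X^3] = 0$. To make the error uniform in the law of $X$, I would use Cauchy estimates. For $|z| \leq 1$ we have $|M_X(z)| \leq \mathbb{E}[e^{|X|}] \leq 2e^{1/2}$, an absolute constant. Hence the Taylor remainder satisfies $|M_X(z) - 1 - z^2/2| \leq C|z|^4$ for $|z| \leq 1/2$, with $C$ absolute, using that the first and third coefficients vanish and the second is $1/2$. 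Choose an absolute $r_0 \leq 1/2$ such that $|M_X(z) - 1| \leq 1/2$ on $|z| \leq r_0$. Then the principal logarithm is defined there and $\log M_X(z) = z^2/2 + R(z)$ with $|R(z)| \leq C'|z|^4$. Taking real parts, $|M_X(z)| \leq \exp\big(\frac{x^2-y^2}{2} + C'(x^2+y^2)^2\big) \leq \exp\big(\frac{x^2-y^2}{2} + 2C'(x^4+y^4)\big)$ for $|z| \leq r_0$. This is the claim provided $\oldconstant{Constant:MGF_Bound} \geq 2C'$, because when $h(y) = \oldconstant{Constant:MGF_Bound}y^4$ we get exactly the required form, and otherwise $h(y) = y^2/2$ and the global bound already applies.

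\emph{Covering all cases.} Take $\oldconstant{Constant:MGF_Bound} \geq \max\{2C', r_0^{-2}, 2/r_0^2\}$. Then $1/(2\oldconstant{Constant:MGF_Bound}) \leq r_0^2/4$, so every $|y| \geq r_0/2$ falls in the first global case. Every $|x| \geq r_0/2$ is covered by the second global case. The remaining region $|x|, |y| < r_0/2$ lies in the disc $|z| \leq r_0$, where the local bound applies.

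The main obstacle is making the fourth-order Taylor remainder and the nonvanishing radius $r_0$ uniform, since the constant is claimed to be absolute. This is handled by the a priori bound $\mathbb{E}[e^{|X|}] \leq 2e^{1/2}$, which follows from \eqref{Equation:SSG} alone, together with the Cauchy estimates.
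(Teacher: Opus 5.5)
Your proof is correct and follows essentially the same route as the paper: a local fourth-order expansion of $\log M_X$ near the origin (using $\mathbb{E}[X^3]=0$), combined with the global bound $|M_X(x+iy)|\le M_X(x)\le e^{x^2/2}$, a case split on the sizes of $|x|$ and $|y|$, and a sufficiently large constant. Your Cauchy-estimate argument based on the a priori bound $\mathbb{E}[e^{|X|}]\le 2e^{1/2}$ is a genuine improvement. It makes the constant independent of the law of $X$, which the lemma's ``absolute constant'' claim requires. The paper's proof leaves this implicit: its $\delta$ and $C$ come from a branch of $\log M_X$ chosen for a single $X$, yet the lemma is later applied to non-identically distributed $X_j$.
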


\begin{proof}
    Since $M_X(0) = 1 \neq 0$, we can choose an analytic branch $K_X(z) = \log M_X(z)$ for $|z|$ sufficiently small. By condition \eqref{Equation:SSG}, $\mathbb{E}[X] = \mathbb{E}[X^3] = 0$, and $\mathbb{E}[X^2] = 1$, there exists $\delta > 0$ and $C > 0$ so that
    \[\Big|K_X(z) - \dfrac{z^2}{2}\Big| \leq C|z|^4 \quad \text{for all} \quad |z| < \delta.\]
    We can enlarge $C > 0$ and shrink $\delta > 0$ if necessary so that
    \begin{equation}\label{Equation:SSG_1}
        |M_X(x + iy)| \leq \exp\Big(\dfrac{x^2 - y^2}{2} + C(x^4 + y^4)\Big) \quad \text{for all} \quad |x|, |y| < \delta.
    \end{equation}
   On the other hand, \eqref{Equation:SSG} also implies that for all $x, y \in \mathbb{R}$,
    \begin{equation}\label{Equation:SSG_2}
        |M_X(x + iy)| \leq M_X(x) \leq \exp\Big(\dfrac{x^2}{2}\Big) \leq \exp\Big(\dfrac{x^2 - y^2}{2} + \dfrac{y^2}{2}\Big).
    \end{equation}
    If we take $\oldconstant{Constant:MGF_Bound} = \max\{C, 1/(2\delta^2)\}$, then 
    \begin{itemize}
        \item for $|y| < \delta$ and $|x| < \delta$, \eqref{Equation:SSG_1} and \eqref{Equation:SSG_2} together gives \eqref{Equation:SSG_MGF};
        \item for $|y| < \delta$ and $|x| \geq \delta$, from \eqref{Equation:SSG_2},
        \[|M_X(x + iy)| \leq M_X(x) \leq \exp\Big(\dfrac{x^2 - y^2}{2} + \dfrac{\delta^2}{2}\Big) \leq \exp\Big(\dfrac{x^2 - y^2}{2} + \oldconstant{Constant:MGF_Bound}x^4\Big);\]
        \item for $|y| \geq \delta$, we have $\oldconstant{Constant:MGF_Bound}y^4 \geq y^2/2$ and \eqref{Equation:SSG_MGF} follows from \eqref{Equation:SSG_2}.
    \end{itemize}
    This concludes the proof of the lemma.
\end{proof}

\begin{proposition}\label{Proposition:General_Bound}
    For each $N \in \mathbb{N}$, let $X_1, \dots, X_{m_N}$ be independent random variables satisfying \eqref{Equation:SSG}. Let $\{a_j\}_{j = 1}^{m_N} \subseteq \mathbb{R}$ and $\{b_j\}_{j = 1}^{m_N} \subseteq \mathbb{R}^{d_N}$ be deterministic coefficients satisfying
    \begin{equation}\label{Equation:General_Covariance}
        \sum_{j = 1}^{m_N} a_j^2 = A_N, \quad \sum_{j = 1}^{m_N} a_jb_j = 0 \quad \text{and} \quad \sum_{j=1}^{m_N}b_jb_j^\top = \tau_N^2I_{d_N}.
    \end{equation}
    Moreover, there exists $\newconstant\label{Constant:General_Bound} > 0$ such that
    $\tau_N^2 \leq \oldconstant{Constant:General_Bound}$. Define for $\alpha \in \mathbb{R}$, $\gamma > 0$ and $v \in \mathbb{R}^{d_N}$,
    \[R_N(\alpha, \gamma, v) = \mathbb{E}\Big[\exp\Big(\alpha\sum_{j = 1}^{m_N} a_jX_j - \gamma \Big\|v + \sum_{j = 1}^{m_N}b_jX_j\big\|_2^2\Big)\Big].\]
    Last, set
    \[\rho_N = \max_{1 \leq j \leq m_N} \big(|a_j| + \|b_j\|_2\big) \quad \text{and} \quad \Delta_N = \sum_{j = 1}^{m_N} \big(a_j^4 + \|b_j\|_2^4\big).\]
    \begin{enumerate}[label = (\roman*)]
        \item There exists $\newconstant\label{Constant:UB_MGF} = \oldconstant{Constant:UB_MGF}(\alpha, \gamma, \oldconstant{Constant:General_Bound}) > 0$ such that
        \begin{equation}\label{Equation:UB_General}
            \sup_{v \in \mathbb{R}^{d_N}}\log R_N(\alpha, \gamma, v) \leq \frac{\alpha^2A_N}{2} - \frac{d_N}{2}\log(1 + 2\gamma\tau_N^2) + \oldconstant{Constant:UB_MGF}\Delta_N
        \end{equation}
        \item If $\rho_N = o_N(1)$ and $\Delta_N = O_N(1)$, then
        \begin{equation}\label{Equation:LB_General}
            \log R_N(\alpha, \gamma, 0) = \frac{\alpha^2A_N}{2} - \frac{d_N}{2}\log(1 + 2\gamma\tau_N^2) + o_N(N).
        \end{equation}
    \end{enumerate}
\end{proposition}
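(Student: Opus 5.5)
We linearize the quadratic term with a Gaussian (Hubbard--Stratonovich) identity, use the complex MGF bound \eqref{Equation:SSG_MGF} for (i), and use a change of measure for (ii). Write $S_a=\sum_j a_jX_j$ and $S_b=\sum_j b_jX_j$. For $\xi\sim\texttt{Normal}(0,I_{d_N})$ independent of the $X_j$'s,
\[
e^{-\gamma\|v+S_b\|_2^2}=\mathbb{E}_\xi\big[e^{i\sqrt{2\gamma}\langle \xi, v+S_b\rangle}\big].
\]
By Fubini and independence,
\[
R_N(\alpha,\gamma,v)=\mathbb{E}_\xi\Big[e^{i\sqrt{2\gamma}\langle\xi,v\rangle}\prod_{j}M_{X_j}\big(\alpha a_j+i\sqrt{2\gamma}\langle b_j,\xi\rangle\big)\Big].
\]

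\textbf{Step 1: the bound (i).} Bound the modulus factorwise using \eqref{Equation:SSG_MGF}, with $x_j=\alpha a_j$ and $y_j=\sqrt{2\gamma}\langle b_j,\xi\rangle$. The orthogonality relations \eqref{Equation:General_Covariance} give $\sum_j y_j^2=2\gamma\tau_N^2\|\xi\|^2$, and $\sum_j x_j^4=\alpha^4\sum_j a_j^4$. The Gaussian part therefore contributes
\[
\mathbb{E}_\xi\big[e^{-\gamma\tau_N^2\|\xi\|^2}\big]=(1+2\gamma\tau_N^2)^{-d_N/2}
\]
times $e^{\alpha^2A_N/2}$. The remaining error is $\sum_j h(y_j)\le \oldconstant{Constant:MGF_Bound}\sum_j y_j^4$, which is a quartic form in $\xi$. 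It must be absorbed without destroying the Gaussian integral.

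This is the main obstacle, since $\exp(c\sum_j\langle b_j,\xi\rangle^4)$ is not integrable. My plan is to use $h(y)\le y^2/2$ on the large-$|y|$ region, so the total weight $e^{-\sum_j y_j^2/2+\sum_j h(y_j)}$ never exceeds $1$. The log of the integrand is then $-\sum_j\phi(\langle b_j,\xi\rangle)$ with $\phi(s)=\gamma s^2-\min\{\gamma s^2,c s^4\}$. One can choose $h$ so that $\phi$ is convex (modify $h$ appropriately), making the measure log-concave. Comparing with the Gaussian $\|\xi\|^2/2$ reference via Brascamp--Lieb (or Jensen/Prékopa), the ratio to $(1+2\gamma\tau_N^2)^{-d_N/2}$ is at most $\exp(\oldconstant{Constant:UB_MGF}\,\mathbb{E}\sum_j\langle b_j,\xi'\rangle^4)$ under the tilted Gaussian $\xi'$ of covariance $(1+2\gamma\tau_N^2)^{-1}I$.

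That expectation equals $3\sum_j\|b_j\|^4/(1+2\gamma\tau_N^2)^2$, which is $O(\Delta_N)$. The bound is uniform in $v$ because $|e^{i\cdots}|=1$.

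\textbf{Step 2: the asymptotics (ii).} The upper bound follows from (i) since $\Delta_N=O(1)$. For the lower bound, take $v=0$ and first tilt by $\alpha$: define $d\tilde{\mathbb P}_j\propto e^{\alpha a_jX_j}d\mathbb P_j$. Then
\[
R_N=\prod_j M_{X_j}(\alpha a_j)\cdot\tilde{\mathbb E}\big[e^{-\gamma\|S_b\|^2}\big].
\]
Since $\rho_N\to0$, the cumulant expansion gives $\log M_{X_j}(\alpha a_j)\ge \alpha^2a_j^2/2-Ca_j^4$, so the first factor is at least $e^{\alpha^2A_N/2-O(\Delta_N)}$.

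Under $\tilde{\mathbb P}$, each $X_j$ has mean $\alpha a_j+O(a_j^3)$ and variance $1+O(a_j^2)$. Hence
\[
\tilde{\mathbb E}S_b=\alpha\sum_j a_jb_j+O\Big(\sum_j|a_j|^3\|b_j\|\Big)=O(\Delta_N)\ \text{ in norm},
\]
and the covariance of $S_b$ is $\tau_N^2I+E$ with $\|E\|_{\mathrm{F}}^2\lesssim\rho_N^2\Delta_N$. Jensen's inequality only yields $e^{-\gamma\mathbb{E}\|S_b\|^2}$, which is too lossy. Instead I would bound $\tilde{\mathbb E}[e^{-\gamma\|S_b\|^2}]$ from below by Hubbard--Stratonovich again, expanding $\log M_{\tilde X_j}(i\sqrt{2\gamma}\langle b_j,\xi\rangle)$ to second order on the event $\|\xi\|\le C\sqrt{d_N}$. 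On that event each $|y_j|\le C\rho_N\sqrt{d_N}$, which need not be small, so I would instead restrict to a typical set where $\max_j|\langle b_j,\xi\rangle|$ is small. Gaussian concentration makes this set carry most of the relevant mass, since $\langle b_j,\xi\rangle\sim\texttt{Normal}(0,\|b_j\|^2)$ and $\|b_j\|\le\rho_N$.

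The resulting quartic and cross corrections are $O(\Delta_N+\rho_N^2 N)=o(N)$, and the restriction costs only $o(N)$ in the exponent. This gives $-\frac{d_N}{2}\log(1+2\gamma\tau_N^2)+o(N)$.
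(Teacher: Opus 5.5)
\textbf{Gap in Step 1.} Step 1 breaks at the claim that $h$ can be modified so that $\phi(s)=\gamma s^2-h(\sqrt{2\gamma}s)$ is convex. You cannot choose $h$ freely, because taking $x=0$ in \eqref{Equation:SSG_MGF} forces $h(y)\ge y^2/2+\log|M_X(iy)|$. Rademacher disorder satisfies \eqref{Equation:SSG} and has $|M_X(iy)|=|\cos y|$. Hence $h(k\pi)\ge k^2\pi^2/2$, so $\phi\le 0$ at every $s=k\pi/\sqrt{2\gamma}$, $k\in\mathbb{Z}$, including $k=0$. A convex $\phi$ with this property is $\le 0$ everywhere, i.e.\ $h(y)\ge y^2/2$ for all $y$. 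Equivalently, $\phi$ convex means $h''\le 1$. Either way the factor $(1+2\gamma\tau_N^2)^{-d_N/2}$ disappears from your bound.

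Even granting a log-concave weight, neither Jensen nor Pr\'ekopa gives the inequality you need, $\mathbb{E}_{\xi'}\exp\big(\sum_j h(\sqrt{2\gamma}\langle b_j,\xi'\rangle)\big)\le \exp\big(C\sum_j\|b_j\|_2^4\big)$. Jensen points the other way, and this reverse-Jensen bound for a sum of strongly correlated terms is the real content of (i).

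The missing idea is the geometric rank-one Gaussian Brascamp--Lieb inequality of Lemma \ref{lem:gaussian-BL}, not a log-concavity comparison. Apply it with $u_j=b_j/\|b_j\|_2$ and $\lambda_j=\|b_j\|_2^2/\tau_N^2$; its frame condition $\sum_j\lambda_ju_ju_j^\top=I_{d_N}$ is exactly $\sum_jb_jb_j^\top=\tau_N^2I_{d_N}$. It splits the expectation into one-dimensional factors $\mathbb{E}[\exp(q\,h(\xi_N\|b_j\|_2Z)/\lambda_j)]^{\lambda_j}$, where $\xi_N^2=2\gamma/(1+2\gamma\tau_N^2)$. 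No convexity is needed:
\begin{itemize}
\item $h(y)\le y^2/2$ bounds the exponent by $q\frac{\gamma\tau_N^2}{1+2\gamma\tau_N^2}Z^2$ uniformly in $j$, which is integrable for $q>1$ close to $1$;
\item $h(y)\le cy^4$ makes each factor $\exp(O(\|b_j\|_2^4))$.
\end{itemize}

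\textbf{Gap in part (ii).} The lower bound fails at ``the restriction costs only $o(N)$ in the exponent''. After Hubbard--Stratonovich under the tilted law, the integrand $\prod_jM_{\tilde X_j}(i\sqrt{2\gamma}\langle b_j,\xi\rangle)$ is complex-valued, not nonnegative. So discarding the atypical set does not give a lower bound. You must show that set contributes little compared with the main term, which is exponentially small, of order $(1+2\gamma\tau_N^2)^{-d_N/2}$. The trivial bound $|M_{\tilde X_j}(iy)|\le 1$ only controls that contribution by the probability of the atypical set, which is $o(1)$ but not exponentially small.

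The paper handles this in four steps:
\begin{itemize}
\item It pulls out the Gaussian factor by the substitution $\xi\mapsto\xi_Ng$, so the normalized integrand $\mathcal{F}_N(\xi_Ng)$ is of order one.
\item It shows $\mathcal{F}_N(\xi_Ng)$ is bounded in $L^q$ for some $q>1$. This is the same Brascamp--Lieb estimate, which is why $q>1$ is built into \eqref{Equation:General_Bound_3}.
\item It shows $\mathcal{F}_N(\xi_Ng)\to\exp(\mathbb{E}_g[\mathcal{D}_N(g)])$ in probability. This uses the fourth-cumulant expansion on $\{\max_j|x_j+i\xi_Ny_j|<\delta\}$ and Gaussian Poincar\'e for $\mathcal{D}_N$.
\item Uniform integrability then gives convergence in $L^1$.
\end{itemize}
So (ii) rests on the same missing decoupling estimate, in its $L^q$ form. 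Your preliminary tilt by $\alpha$ is a harmless alternative to expanding $\log M_{X_j}(x_j+i\xi_Ny_j)$ jointly, but it does not address this point.
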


\begin{proof}
    Consider the random vector $g \sim \texttt{Normal}(0, I_{d_N})$,
    \[x_j = \alpha a_j, \quad y_j = \langle b_j, g\rangle, \quad \text{and} \quad \xi_N = \sqrt{\dfrac{2\gamma}{1 + 2\gamma \tau_N^2}}.\]
    Denote the function
    \begin{equation}\label{Equation:General_Bound_1}
        \mathcal{F}_N(g) = \exp\Big(-\dfrac{1}{2}\big(\alpha^2A_N + \tau_N^2\|g\|_2^2\big)\Big)\prod_{j = 1}^{m_N}M_{X_j}(x_j + iy_j)
    \end{equation}
    By linearizing the quadratic term in $R_N(\alpha, \beta, v)$ with the identity $e^{-\|v\|_2^2/2} = \mathbb{E}_g[e^{i\langle v, g\rangle}]$, we have
    \begin{align}
        \log R_N(\alpha, \beta, v) &= \log\mathbb{E}_{g}\Big[e^{i\sqrt{2\gamma}\langle v, g\rangle} \prod_{j = 1}^{m_N}M_{X_j}(x_j + i\sqrt{2\gamma}y_j)\Big] \notag \\
        &= \frac{\alpha^2A_N}{2} - \frac{d_N}{2}\log(1 + 2\gamma\tau_N^2) + \log \mathbb{E}_g\big[e^{i\sqrt{2\gamma}\langle v, g\rangle} \mathcal{F}_N(\xi_Ng)\big], \label{Equation:General_Bound_2}
    \end{align}
    where \eqref{Equation:General_Bound_2} holds by the identities \eqref{Equation:General_Covariance}.
    
    First, we claim that there exists $q > 1$ and $C > 0$ such that
    \begin{equation}\label{Equation:General_Bound_3}
        \mathbb{E}\big[|\mathcal{F}_N(\xi_Ng)|^q\big] \leq e^{C\Delta_N}
    \end{equation}
    We see from the inequality \eqref{Equation:SSG_MGF} induced by strict subgaussianity that
    \begin{align}
        &\log \mathbb{E}_g\big[|\mathcal{F}_N(\xi_Ng)|^q\big] \notag \\
        &\leq -\dfrac{q}{2}\big(\alpha^2A_N + \tau_N^2\|\xi_Ng\|_2^2\big) +  \mathbb{E}_g\Big[\exp\Big(q\sum_{j = 1}^{m_N} \dfrac{x_j^2 - (\rho _Ny_j)^2}{2} + \oldconstant{Constant:MGF_Bound}x_j^4 + h(\xi_Ny_j)\Big)\Big] \notag \\
        &= \oldconstant{Constant:MGF_Bound}q\sum_{j = 1}^{m_N}x_j^4 + \log \mathbb{E}_g\Big[\exp\Big(q\sum_{j = 1}^{m_N} h(\xi_Ny_j)\Big)\Big] \label{Equation:General_Bound_4},
    \end{align}
    where \eqref{Equation:General_Bound_4} holds by the identities \eqref{Equation:General_Covariance}. 
    Next, we choose $q > 1$ small enough so that
    \begin{equation}\label{Equation:General_Bound_5}
        q\Big(\dfrac{\gamma \oldconstant{Constant:General_Bound}}{1 + 2\gamma \oldconstant{Constant:General_Bound}}\Big) < \frac{1}{2}.
    \end{equation}
    For the last term in \eqref{Equation:General_Bound_4}, we shall apply the Gaussian Brascamp-Lieb inequality \eqref{Equation:GBL} with
    \[m = m_N, \quad d = d_N, \quad u_j = \dfrac{b_j}{\|b_j\|_2}, \quad \lambda_j = \dfrac{\|b_j\|_2^2}{\tau_N^2}, \quad f_j(x) = \exp\Big(\dfrac{qh(\xi_N\|b_j\|_2x)}{\lambda_j}\Big),\]
    where we may assume without lose of generality that $b_j \neq 0$. We see then
    \begin{equation}\label{Equation:General_Bound_6}
        \log \mathbb{E}_g\Big[\exp\Big(\sum_{j = 1}^{m_N} h(\xi_Ny_j)\Big)\Big] = \log \mathbb{E}_Z\Big[\prod_{j = 1}^{m_N} f_j(\langle u_j, g\rangle)^{\lambda_j}\Big] \leq \sum_{j = 1}^{m_N} \lambda_j \log \mathbb{E}_Z[f_j(Z)]
    \end{equation}
    Note that by the elementary inequalities $\log(1 + x) \leq x$, $e^x - 1\leq xe^x$, and the definition of the function $h(y) = \min\{y^2/2, \oldconstant{Constant:MGF_Bound}y^4\}$, we see for our choice of $q > 1$ in \eqref{Equation:General_Bound_5},
    \begin{align}
        \lambda_j \log \mathbb{E}_Z[f_j(Z)] \leq \lambda_j (\mathbb{E}_Z[f_j(Z)] - 1) &\leq \mathbb{E}_Z\Big[qh(\xi_N\|b_j\|_2Z)\exp\Big(\dfrac{qh(\xi_N\|b_j\|_2Z)}{\lambda_j}\Big)\Big] \notag \\
        &\leq \mathbb{E}_Z\Big[\oldconstant{Constant:MGF_Bound}q(\xi_N\|b_j\|_2Z)^4\exp\Big(\dfrac{q\xi_N^2\|b_j\|_2^2Z^2}{2\lambda_j}\Big)\Big] \notag \\
        &= \oldconstant{Constant:MGF_Bound}q\xi_N^4\mathbb{E}_Z\Big[Z^4\exp\Big(q\Big(\dfrac{\gamma \tau_N^2}{1 + 2\gamma \tau_N^2}\Big)Z^2\Big)\Big]\|b_j\|_2^4 \notag \\
        &\leq \oldconstant{Constant:MGF_Bound}q\xi_N^4\mathbb{E}_Z\Big[Z^4\exp\Big(q\Big(\dfrac{\gamma \oldconstant{Constant:General_Bound}}{1 + 2\gamma \oldconstant{Constant:General_Bound}}\Big)Z^2\Big)\Big]\|b_j\|_2^4 \label{Equation:General_Bound_7}
    \end{align}
    The claim \eqref{Equation:General_Bound_3} follows directly by combining \eqref{Equation:General_Bound_4}, \eqref{Equation:General_Bound_6}, and \eqref{Equation:General_Bound_7}.
    
    To show the upper bound, we see that \eqref{Equation:UB_General} follows from combining \eqref{Equation:General_Bound_2}, the claim \eqref{Equation:General_Bound_3} with $q = 1$ and
    \[\big|\mathbb{E}_g\big[e^{i\sqrt{2\gamma}\langle v, g\rangle} \mathcal{F}_N(\xi_Ng)\big]\big| \leq \mathbb{E}_g\big[|\mathcal{F}_N(\xi_Ng)|\big].\]
    
    For the second item, let $\kappa_j = (\log M_{X_j})^{(4)}(0)$ be the fourth cumulant of $X_j$ (which, say, are uniformly bounded by $\kappa$). By \eqref{Equation:SSG}, there exists $\delta, C > 0$  such that for all $|z| < \delta$
    \[\log M_{X_j}(z) = \dfrac{z^2}{2} + \dfrac{\kappa_j}{24}z^4 + Q_j(z) \quad \text{and} \quad \sup_{N \in \mathbb{N}}\sup_{1 \leq j \leq m_N}|Q_j(z)| \leq C|z|^5.\]
    Define
    \[\mathcal{D}_N(g) = \dfrac{1}{24}\sum_{j = 1}^{m_N} \kappa_j(x_j + i\xi_Ny_j)^4 \quad \text{and} \quad \mathcal{E}_N = \Big\{\max_{1 \leq j \leq m_N}|x_j + i\xi_N y_j| < \delta\Big\}.\]
    We claim that on the event $\mathcal{E}_N$,
    \begin{equation}\label{Equation:LB_General_1}
        \mathcal{F}_N(\xi_Ng) \to \exp\big(\mathbb{E}_g\big[\mathcal{D}_N(g)\big]\big) \quad \text{in $\mathcal{L}^1$}. 
    \end{equation}
    Note that on $\mathcal{E}_N$, one has from the definition \eqref{Equation:General_Bound_1}, the identities \eqref{Equation:General_Covariance}, that
    \begin{equation}\label{Equation:LB_General_2}
        \mathcal{F}_N(\xi_Ng) = \exp\Big[\mathcal{D}_N(g) + O\Big(\sum_{j = 1}^{m_N}|x_j + i\xi_Ny_j|^5\Big)\Big]
    \end{equation}
    By Gaussian moment bounds, there exists $\newconstant\label{Constant:LB_MGF} = \oldconstant{Constant:LB_MGF}(\alpha, \gamma, \oldconstant{Constant:General_Bound}) > 0$ such that for $k = 4, 5, 6$,
    \begin{equation}\label{Equation:LB_General_3}
        \mathbb{E}_g\Big[\sum_{j = 1}^{m_N} |x_j + i\xi_Ny_j|^k\Big] \leq 2^k\sum_{j = 1}^{m_N} \big(|\alpha|^k|a_j|^k + \xi_N^k\mathbb{E}_g\big[|\langle b_j, g\rangle|^k\big]\big) \leq \oldconstant{Constant:LB_MGF}\rho_N^{k - 4}\Delta_N
    \end{equation}
    In particular, $\mathbb{P}_g(\mathcal{E}_N) \to 1$ by Markov's inequality and the remainder term in \eqref{Equation:LB_General_2} satisfies $\sum_{j = 1}^{m_N}|x_j + i\xi_Ny_j|^5 \to 0$ in probability. Moreover, by Gaussian Poincar\'{e} inequality (see \cite[Theorem 3.20]{boucheron2013concentration}), applied to the real and imaginary parts,
    \begin{align}
        \mathbb{E}_g\big[\big|\mathcal{D}_N(g) - \mathbb{E}_g[\mathcal{D}_N(g)]\big|^2\big] &\leq \xi_N^2 \mathbb{E}_g\big[\|\nabla\mathcal{D}_N(g)\|_2^2\big] \notag \\
        &= \xi_N^2 \mathbb{E}_g\Big[\Big\|\dfrac{1}{6}\sum_{j = 1}^{m_N}\kappa_j(x_j + i\xi_Ny_j)^3\xi_Nb_j\Big\|_2^2\Big] \notag \\
        &\leq \dfrac{\xi_N^4\kappa^2\tau_N^2}{6}\mathbb{E}_g\Big[\sum_{j = 1}^{m_N} |x_j + i\xi_Ny_j|^6\Big] \notag \\
        &\leq \dfrac{\xi_N^4\kappa^2\tau_N^2}{6} \cdot \oldconstant{Constant:LB_MGF}\rho_N^2\Delta_N = o_N(1), \label{Equation:LB_General_4}
    \end{align}
    where the last inequality holds by \eqref{Equation:LB_General_3} with $k = 6$. Combining \eqref{Equation:LB_General_2} and \eqref{Equation:LB_General_4}, we see that $\mathcal{F}_N(\xi_Ng) \to \exp\big(\mathbb{E}_g\big[\mathcal{D}_N(g)\big]\big)$ in probability. Since $\Delta_N = O_N(1)$, \eqref{Equation:General_Bound_3} further implies that the sequence $\mathcal{F}_N(\xi_g)$ is $\mathcal{L}^q$-bounded, thus concludes \eqref{Equation:LB_General_1}. Moreover, the $\mathcal{L}^q$-boundedness of $\mathcal{F}_N(\xi_Ng)$ and boundedness of $\mathcal{D}_N(g)$ further implies
    \[\mathbb{E}_g[\mathcal{F}_N(\xi_Ng)] = \exp\big(\mathbb{E}_g\big[\mathcal{D}_N(g)\big]\big)(1 + o_N(1)).\]
    Plugging this in \eqref{Equation:General_Bound_2} and setting $v = 0$ will then give us the desired asymptote \eqref{Equation:LB_General}.
\end{proof}

\subsection{Upper Bound}

\noindent Before applying Proposition \ref{Proposition:General_Bound} to establish the upper bound in \eqref{Equation:Convergence_of_MGF}, we need to introduce some notation. First, we set
\[a_I(\sigma) = \sqrt{\dfrac{N}{N - 1}}\dfrac{\sigma_I}{N^{(p - 1)/2}} \in \mathbb{R} \quad \text{and} \quad b_I(\sigma) = \dfrac{B(\sigma)^\top\nabla\sigma_I}{N^{(p - 1)/2}} \in \mathbb{R}^{N - 1}.\]
For a configuration $\sigma \in \sqrt{N}\mathbb{S}^{N - 1}$, we define the set of delocalized and localized coordinate of $\sigma$ by
\[L(\sigma) = \big\{i \in [N] \,\big\vert\, |\sigma_i| \geq \log N\big\} \subseteq [N] \quad \text{and} \quad D(\sigma) = [N] \setminus L(\sigma),\]
where $|L(\sigma)| \leq r_N := \lfloor N/(\log N)^2\rfloor = o_N(N)$. Additionally, consider the subspace
\[E_D(\sigma) = \big\{v \in \mathbb{R}^N \colon \langle v, \sigma \rangle = 0\,\,\mbox{and}\,\, v_i = 0 \text{ for all } i \in L(\sigma)\big\} \subseteq \{\sigma\}^\perp\]
of dimension $N - |L(\sigma)| - 1$. Let $B_D(\sigma)$ be a $N \times (N - |L(\sigma)| - 1)$ semi-orthogonal matrix consisting of an orthonormal basis of $E_D(\sigma)$. Then, 
\begin{equation}\label{Equation:B_D_and_B}
    \|B(\sigma)^\top v\|_2 \geq \|B_D(\sigma)^\top v\|_2, \quad \forall v \in \mathbb{R}^N
\end{equation}

\begin{remark}\label{add:remark1}
    \rm The matrix $B_D(\sigma)$ has two useful properties that will be essential in our proofs later. First, for any $v \in \mathbb{R}^N$ vanishing on $D(\sigma)$, we have $B_D(\sigma)^\top v = 0$. Second, if $v \in \mathbb{R}^N$ is supported on $D(\sigma)$ with $v_i = \sigma_i$ for $i \in D(\sigma)$, then $B_D(\sigma)^\top v = 0.$ The former holds since any element in $E_D(\sigma)$ is supported on $D(\sigma)$ and this readily implies that $v \in E_D(\sigma)^\perp$ so that $B_D(\sigma)^\top v = 0.$ The latter holds true due to $B_D(\sigma)^\top \sigma = 0$.
\end{remark}

Set $[N]_\text{hi}^p$ to be the collection of tensor index $I$ that has at most $1$ entries in $D(\sigma)$ and $[N]_{\text{lo}}^p = [N]^p \setminus [N]_\text{hi}^p$. We split the Hamiltonian $H_N^J(\sigma) = H_N^{\text{hi}}(\sigma) + H_N^{\text{lo}}(\sigma)$ into high- and low-influence parts, where each part only contains tenser indices that are in the corresponding subset.  Set
\[c_I(\sigma) = \dfrac{B_D(\sigma)^\top\nabla \sigma_I}{N^{(p - 1)/2}} \in \mathbb{R}^{N - |L(\sigma)| - 1} \quad \text{and} \quad \eta = \eta(\sigma) = \dfrac{1}{N}\sum_{i \in L(\sigma)} \sigma_i^2 \in [0, 1],\]

\begin{lemma}
    For all $\sigma \in \sqrt{N}\mathbb{S}^{N - 1}$, the following identities hold:
    \begin{equation}\label{Equation:Covariance_Ord_H}
        \sum_{I \in [N]^p} a_I(\sigma)^2 = \dfrac{N^2}{N - 1}, \,\,\sum_{I \in [N]^p} a_I(\sigma)b_I(\sigma) = 0, \,\,\text{and} \,\, \sum_{I \in [N]^p} b_I(\sigma)b_I(\sigma)^\top = pI_{N - 1},
    \end{equation}
    and also
    \begin{equation}\label{Equation:Ub_MGF_1}
        \begin{aligned}
            \sum_{I \in [N]_{\text{lo}}^p} a_I(\sigma)^2 = \dfrac{N^2s(\eta)}{N - 1}, \,\,\sum_{I \in [N]_{\text{lo}}^p} a_I(\sigma)c_I(\sigma) = 0, \,\,\sum_{I \in [N]_{\text{lo}}^p} c_I(\sigma)c_I(\sigma)^\top = p\big(1 - \eta^{p - 1}\big)I_{N - |L(\sigma)| - 1},
        \end{aligned}
    \end{equation}
    where $s(\eta) = 1 - \eta^p - p\eta^{p - 1}(1 - \eta)$.
\end{lemma}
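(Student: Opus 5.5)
All the identities come from one observation. Each sum over $I$ is a coefficient-level covariance identity, so it can be read off from the explicit kernel $K(\sigma,\tau):=\sum_{I}\sigma_I\tau_I$ (or its restriction to $[N]^p_{\text{lo}}$) and its derivatives at $\tau=\sigma$. The projections $B(\sigma)$ and $B_D(\sigma)$ then remove every term proportional to $\sigma$, or to its restriction to $D(\sigma)$. I would proceed as follows.

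\textbf{Step 1: the identities \eqref{Equation:Covariance_Ord_H}.} The product structure gives $\sum_I\sigma_I^2=\|\sigma\|_2^{2p}=N^p$. Hence $\sum_I a_I^2=\frac{N}{N-1}N^{p}/N^{p-1}=N^2/(N-1)$.

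For the cross term, write $\sum_I\sigma_I\nabla\sigma_I=\frac12\nabla\big(\|\sigma\|_2^{2p}\big)=p\|\sigma\|_2^{2(p-1)}\sigma$. This is parallel to $\sigma$, so applying $B(\sigma)^\top$ gives $0$.

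For the last identity, $K(\sigma,\tau)=\langle\sigma,\tau\rangle^p$, and
\[
\sum_I\partial_j\sigma_I\,\partial_k\sigma_I=\partial_{\sigma_j}\partial_{\tau_k}K\big|_{\tau=\sigma}=pN^{p-1}\delta_{jk}+p(p-1)N^{p-2}\sigma_j\sigma_k .
\]
Conjugating by $B(\sigma)$ kills the rank-one part. Together with \eqref{Equation:Projection_1} and the factor $N^{-(p-1)}$, this leaves $pI_{N-1}$.

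\textbf{Step 2: the low-influence kernel.} Set $u=\langle\sigma_L,\tau_L\rangle$ and $v=\langle\sigma_D,\tau_D\rangle$, where the subscripts denote restriction to $L(\sigma)$ and $D(\sigma)$. Expand $(u+v)^p$ and sort the monomials by how many factors come from $D$. The indices with at most one entry in $D(\sigma)$ contribute exactly $u^p+pu^{p-1}v$. Therefore
\[
K_{\text{lo}}(\sigma,\tau):=\sum_{I\in[N]^p_{\text{lo}}}\sigma_I\tau_I=(u+v)^p-u^p-pu^{p-1}v .
\]
At $\tau=\sigma$ we have $u=\eta N$ and $v=(1-\eta)N$. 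This gives $\sum_{\text{lo}}a_I^2=\frac{N}{N-1}N\big(1-\eta^p-p\eta^{p-1}(1-\eta)\big)=\frac{N^2s(\eta)}{N-1}$.

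\textbf{Step 3: the cross term with $c_I$.} Compute $\sum_{\text{lo}}\sigma_I\nabla\sigma_I=\nabla_\tau K_{\text{lo}}(\sigma,\tau)|_{\tau=\sigma}$.
\begin{itemize}
\item Its coordinates in $L(\sigma)$ are annihilated by $B_D(\sigma)^\top$, by the first property in Remark \ref{add:remark1}.
\item Its $D$-coordinates equal $p\big[(u+v)^{p-1}-u^{p-1}\big]\sigma_k$ for $k\in D$. This is a scalar multiple of the vector that agrees with $\sigma$ on $D(\sigma)$, so it is killed by the second property in the Remark.
\end{itemize}
Hence $\sum_{\text{lo}}a_Ic_I=0$.

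\textbf{Step 4: the Gram matrix of the $c_I$.} Since $B_D(\sigma)^\top$ only sees $D$-coordinates, only the entries $\partial_{\sigma_j}\partial_{\tau_k}K_{\text{lo}}$ with $j,k\in D$ matter. First,
\[
\partial_{\tau_k}K_{\text{lo}}=p\big[(u+v)^{p-1}-u^{p-1}\big]\sigma_k .
\]
Because $u$ does not depend on $\sigma_j$ for $j\in D$, differentiating in $\sigma_j$ gives
\[
p\big[(u+v)^{p-1}-u^{p-1}\big]\delta_{jk}+p(p-1)(u+v)^{p-2}\tau_j\sigma_k .
\]
At $\tau=\sigma$ the second term is a multiple of $\sigma_D\sigma_D^\top$, which $B_D(\sigma)$ annihilates. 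The first term equals $pN^{p-1}(1-\eta^{p-1})\delta_{jk}$. Using $B_D^\top B_D=I$ and dividing by $N^{p-1}$ yields $p(1-\eta^{p-1})I_{N-|L(\sigma)|-1}$.

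\textbf{Main obstacle.} The only delicate point is the combinatorial claim in Step 2, that the index class $[N]^p_{\text{lo}}$ corresponds exactly to the terms of $(u+v)^p$ of degree at least $2$ in $v$, and the bookkeeping in Steps 3 and 4. There one must check that every term surviving $B_D(\sigma)$ is either supported on $L(\sigma)$ or proportional to $\sigma$ restricted to $D(\sigma)$. Everything else is direct differentiation of polynomials in $u$ and $v$.
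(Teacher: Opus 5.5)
Your proposal is correct and follows the paper's proof essentially step for step. The paper likewise encodes the sums through the low-influence covariance kernel $C(x,y)=N\big(R^p-R_L^p-pR_L^{p-1}R_D\big)$, which is your $N^{1-p}K_{\text{lo}}$ written in normalized overlaps. It reads off the identities from $C(\sigma,\sigma)$, $\nabla_y C(\sigma,\sigma)$ and the mixed second derivatives, and it discards the $L$-supported and $\sigma_D$-proportional pieces using the two annihilation properties in Remark \ref{add:remark1}. The only cosmetic difference is that you compute the $D\times D$ block explicitly, whereas the paper describes the rank-one form of the off-diagonal blocks.
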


\begin{proof}
    Both \eqref{Equation:Covariance_Ord_H} and \eqref{Equation:Ub_MGF_1} can be proved by the same computation as in Proposition \ref{Proposition:Conditional_Covariance}. We only provide the details for \eqref{Equation:Ub_MGF_1}. Fix $\sigma\in \sqrt{N}\mathbb{S}^{N-1}$. Set, for any $x,y\in \mathbb{R}^N,$
\begin{align*}
    C(x,y)&=\mathbb{E}\big[H_N^{\text{lo}}(x)H_N^{\text{lo}}(y)] = \sum_{I \in [N]_{\text{lo}}^p} \dfrac{x_Iy_I}{N^{p - 1}}\\
    &= N\big(R(x, y)^p - R_L(x, y)^p - pR_L(x, y)^{p - 1}R_D(x, y)\big),
\end{align*}
    where 
    \[R(x, y):= \dfrac{1}{N}\sum_{i = 1}^N x_iy_i, \,\, R_L(x, y):= \dfrac{1}{N}\sum_{i \in L(\sigma)} x_iy_i, \,\, \text{and} \,\, R_D(x, y):= R(x, y) - R_L(x, y).\]
    By definition, \eqref{Equation:Ub_MGF_1} is obtained by
    \[\sum_{I \in [N]_{\text{lo}}^p} a_I(\sigma)^2 = \dfrac{N}{N - 1}C(\sigma, \sigma) = \dfrac{N^2}{N - 1}\big(1 - \eta^p - p\eta^{p - 1}(1 - \eta)\big).\]
    Next write
    \[\sum_{I \in [N]_{\text{lo}}^p} a_I(\sigma)c_I(\sigma) = \sqrt{\dfrac{N}{N - 1}}B_D(\sigma)^\top \nabla_yC(\sigma, \sigma).\]
    Let $v,v'\in \mathbb{R}^N$ be supported on $D(\sigma)$ and $L(\sigma)$ respectively with $v_i=\sigma_i$ on $D(\sigma)$ and $v_i'=\sigma_i$ on $L(\sigma)$. 
    A direct computation yields
    \begin{align*}
      \nabla_y C(\sigma,\sigma) &= p\big(1 - R_L(\sigma,\sigma)^{p-1}\big) v+p\big(1 - R_L(\sigma, \sigma)^{p - 1} - (p - 1)R_L(\sigma, \sigma)^{p-2}R_D(\sigma,\sigma)\big)v'.
    \end{align*}
    From Remark \ref{add:remark1}, $B_D(\sigma)^\top v=0$ and $B_D(\sigma)^\top v'=0$. These yield
    the second identity in \eqref{Equation:Ub_MGF_1}.

    For the last identity in \eqref{Equation:Ub_MGF_1}, write
    \begin{align}\label{add:newlemma:eq1}\sum_{I \in [N]_{\text{lo}}^p} c_I(\sigma)c_I(\sigma)^\top = B_D(\sigma)^\top \Big(\dfrac{\partial^2C(\sigma, \sigma)}{\partial x_s\partial y_t}\Big)_{s, t \in [N]} B_D(\sigma).
    \end{align}
    From a direct computation, it can be seen that the $D(\sigma)\times L(\sigma)$, $L(\sigma)\times D(\sigma)$, and $L(\sigma)\times L(\sigma)$-block submatrices of $(\partial_{x_s\partial y_t}C(\sigma,\sigma))_{s,t\in [N]}$ are of the form $b_{D,L}\sigma_D\sigma_L^\top,$ $b_{L,D}\sigma_L\sigma_D^\top$, and $d_{L,L}I_L+b_{L,L}\sigma_L\sigma_L^\top$, where $b_{D,L},b_{L,D},b_{L,L},$ and $d_{L,L}$ are scalars depending on $\sigma$ and $p$ and $\sigma_D$ and $\sigma_L$ are the restrictions of $\sigma$ to $D(\sigma)$ and $L(\sigma)$ respectively. As a consequence of Remark~\ref{add:remark1}, only the $D(\sigma)\times D(\sigma)$-block submatrix of $(\partial_{x_s\partial y_t}C(\sigma,\sigma))_{s,t\in [N]}$ can contribute to the right-hand side of \eqref{add:newlemma:eq1}. Since
    \begin{equation*}
        \Big(\dfrac{\partial^2C(\sigma, \sigma)}{\partial x_s\partial y_t}\Big)_{s, t \in D(\sigma)} = p\big(1 - \eta^{p - 1}\big)I_{D(\sigma)} + \dfrac{p(p - 1)}{N}\sigma_D\sigma_D^\top,
    \end{equation*}
    by using the second vanishing property in Remark \ref{add:remark1}, the proof of the third identity in \eqref{Equation:Ub_MGF_1} follows.
\end{proof}

\begin{proposition}
    If $\{J_I\}_{I \in [N]^p, N \in \mathbb{N}}$ satisfies \eqref{Equation:SSG}, then for all $\beta < 1/2p$,
    \begin{equation}\label{Equation:Ub_MGF}
        \limsup_{N \to \infty} \Lambda_N(\alpha, \beta) \leq \frac{\alpha^2}{2} - \frac{1}{2}\log(1 - 2\beta p).
    \end{equation}
\end{proposition}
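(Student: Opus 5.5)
We treat $\beta\in(0,1/2p)$ and $\beta\le 0$ separately, since the sign of the quadratic term changes which tool applies. Throughout, $\alpha\sqrt{N/(N-1)}H_N^J(\sigma)=\alpha\sum_I a_I(\sigma)J_I$ and $B(\sigma)^\top\nabla H_N^J(\sigma)=\sum_I b_I(\sigma)J_I$, with the covariance identities \eqref{Equation:Covariance_Ord_H}.

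\textbf{Case $0<\beta<1/2p$.} Here no localization is needed. Linearize the quadratic term with an independent $g\sim\texttt{Normal}(0,I_{N-1})$:
\[
e^{\beta\|w\|_2^2}=\mathbb{E}_g\big[e^{\sqrt{2\beta}\langle w,g\rangle}\big].
\]
By Fubini and independence of the $J_I$, applying \eqref{Equation:SSG} coordinatewise gives
\[
\mathbb{E}_J\Big[e^{\sum_I(\alpha a_I+\sqrt{2\beta}\langle b_I,g\rangle)J_I}\Big]\le \exp\Big(\tfrac12\sum_I(\alpha a_I+\sqrt{2\beta}\langle b_I,g\rangle)^2\Big).
\]
By \eqref{Equation:Covariance_Ord_H} the exponent equals $\tfrac{\alpha^2N^2}{2(N-1)}+\beta p\|g\|_2^2$. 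Integrating over $g$ gives the bound $\exp\big(\tfrac{\alpha^2N^2}{2(N-1)}\big)(1-2\beta p)^{-(N-1)/2}$, uniformly in $\sigma$. Taking $\frac1N\log$ yields \eqref{Equation:Ub_MGF}.

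\textbf{Case $\beta\le0$, write $\gamma=-\beta\ge 0$.} The case $\gamma=0$ is the computation above. For $\gamma>0$, by \eqref{Equation:B_D_and_B} we replace $B(\sigma)$ by $B_D(\sigma)$, which only increases the expectation. We then split $J=(J_{\text{hi}},J_{\text{lo}})$ and condition on $J_{\text{hi}}$. The vector $v:=B_D(\sigma)^\top\nabla H_N^{\text{hi}}(\sigma)$ is then deterministic. We apply Proposition \ref{Proposition:General_Bound}(i) to the low part with coefficients $a_I,c_I$ for $I\in[N]^p_{\text{lo}}$. By \eqref{Equation:Ub_MGF_1} the hypotheses \eqref{Equation:General_Covariance} hold with
\[
A_N=\frac{N^2s(\eta)}{N-1},\qquad \tau_N^2=p(1-\eta^{p-1})\le p,\qquad d_N=N-|L(\sigma)|-1 .
\]
Since the bound in (i) is uniform over $v$, this gives
\[
\log\mathbb{E}_{J_{\text{lo}}}[\cdots]\le \frac{\alpha^2N^2s(\eta)}{2(N-1)}-\frac{N-|L|-1}{2}\log\big(1+2\gamma p(1-\eta^{p-1})\big)+\oldconstant{Constant:UB_MGF}\Delta_N^{\text{lo}}.
\]
The leftover factor $\mathbb{E}_{J_{\text{hi}}}[e^{\alpha H^{\text{hi}}\cdots}]$ is bounded by SSG by $\exp\big(\alpha^2N^2(1-s(\eta))/2(N-1)\big)$, so the $\alpha$-terms recombine to $\alpha^2N^2/2(N-1)$. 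Every $I\in[N]^p_{\text{lo}}$ has at least two indices in $D(\sigma)$, where $|\sigma_i|<\log N$. A direct count using $\sum\sigma_i^2=N$ should then give $\Delta_N^{\text{lo}}=\sum(a_I^4+\|c_I\|^4)\le (\log N)^{C}\cdot O(1)=o_N(N)$, uniformly in $\sigma$. Also $|L(\sigma)|\le r_N=o_N(N)$.

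\textbf{Averaging over $\sigma$, the main obstacle.} It remains to show
\[
\limsup_N\frac1N\log\mathbb{E}_\sigma\Big[\big(1+2\gamma p(1-\eta(\sigma)^{p-1})\big)^{-N/2}\Big]\le-\tfrac12\log(1+2\gamma p).
\]
The loss relative to $\eta=0$ is at most
\[
\frac N2\log\frac{1+2\gamma p}{1+2\gamma p(1-\eta^{p-1})}\le -\frac N2\log(1-\eta^{p-1}).
\]
So I would prove an upper-tail estimate for $\eta(\sigma)$ under $\mathrm{Unif}(\sqrt N\mathbb{S}^{N-1})$ of the form
\[
\mathbb{P}_\sigma(\eta\ge\varepsilon)\le\exp\big(\tfrac N2\log(1-\varepsilon)+o_N(N)\big),
\]
for example via $\sigma=\sqrt N g/\|g\|_2$ with $g$ Gaussian, together with an exponential Chebyshev bound on $\sum_i g_i^2\mathbbm 1_{\{|g_i|\ge(1-o(1))\log N\}}$. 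One should check whether the costs must instead be compared through the exact Gaussian truncated moment generating function. Then a Varadhan-type comparison over a finite grid in $\eta\in[0,1]$ concludes, because for $p\ge3$
\[
-\tfrac12\log(1-\eta^{p-1})+\tfrac12\log(1-\eta)\le0,
\]
with strict inequality for $\eta\in(0,1)$. I expect this step to be the delicate one. The margin is only the gap between $\eta^{p-1}$ and $\eta$, so the tail estimate must be sharp at the level of the rate $-\tfrac12\log(1-\varepsilon)$, not merely exponential. If that sharpness fails, I would instead choose the localization threshold slightly lower, so that $\eta$ becomes typically negligible, and re-verify $\Delta_N^{\text{lo}}=o_N(N)$.
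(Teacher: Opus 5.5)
Your treatment of $0\le\beta<1/2p$, the replacement of $B(\sigma)$ by $B_D(\sigma)$, the hi/lo splitting with the $\sup_v$ bound from Proposition~\ref{Proposition:General_Bound}(i), and the estimate on $\Delta_N$ all coincide with the paper. Your $\Delta_N$ estimate is in fact $O((\log N)^4)$ uniformly in $\sigma$. The difference is in averaging over $\sigma$. You reduce to a sharp upper tail $\mathbb{P}_\sigma(\eta\ge\varepsilon)\le(1-\varepsilon)^{N/2}e^{o(N)}$ followed by a grid/Varadhan comparison. That route is valid. Your comparison $\frac{1+2\gamma p}{S(\eta)}\le\frac{1}{1-\eta^{p-1}}\le\frac{1}{1-\eta}$, with $S(x)=1+2\gamma p(1-x^{p-1})$, is exactly the paper's pointwise inequality $S(x)\ge(1+2\gamma p)(1-x)$.

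However, the tail bound, which you correctly flag as the crux, is left unproved, and your sketch does not work as stated. The ratio $\|g\|_2^2/N$ falls below $1-o(1)$ with probability only $e^{-o(N)}$, so that event cannot be discarded at the required precision. You would need a fixed small constant $\kappa$ in the threshold, plus a separate estimate on $\{\|g\|_2^2\le\kappa^2 N\}$. Your fallback of moving the threshold would not help either. The exponential tail of $\eta$ is produced by a single coordinate carrying a fraction $\varepsilon$ of the mass, which costs about $(1-\varepsilon)^{(N-1)/2}$ whatever the threshold. So the margin between $\eta^{p-1}$ and $\eta$ is exactly what must be exploited, and it suffices.

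The clean way to close this is the paper's argument. Since $|L(\sigma)|\le r_N$ always, sum over deterministic sets $L$ with $|L|=\ell\le r_N$ and drop the indicator $\{L(\sigma)=L\}$. Then use that $\eta_L(\sigma)=N^{-1}\sum_{i\in L}\sigma_i^2$ is exactly $\texttt{Beta}(\ell/2,(N-\ell)/2)$, whose density carries the factor $(1-x)^{(N-\ell)/2-1}$. The binomial and Beta-normalization factors are $e^{O(\ell\log N)}$, hence $e^{o(N)}$ after summing over $\ell\le r_N$. This gives your tail bound immediately.

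The paper goes one step further and skips the tail/grid step entirely. It integrates $S(x)^{-(N-\ell-1)/2}$ directly against the Beta density and uses $S(x)\ge(1+2\gamma p)(1-x)$ to cancel the power of $(1-x)$, obtaining $(1+2\gamma p)^{-(N-\ell-1)/2}B(\ell/2,1/2)/B(\ell/2,(N-\ell)/2)$. This avoids discretizing in $\eta$ and any separate handling of $\eta$ near $1$, where the $e^{o(N)}$ factor in a tail bound is not uniform. Your modular version has the merit of isolating the single probabilistic input, the Beta-type tail of $\eta$, from the deterministic comparison, but it needs that input actually proved.
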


\begin{proof}
    For $0 \leq \beta < 1/2p$, we linearize the quadratic term in the joint moment generating function by introducing $g \sim \texttt{Normal}(0, I_{N - 1})$ and get
    \begin{align*}
        \Lambda_N(\alpha, \beta) &= \dfrac{1}{N}\log\mathbb{E}_{\sigma, J}\big[\exp\big(\alpha H_N^J(\sigma) + \beta \|B(\sigma)^\top\nabla H_N^J(\sigma)\|_2^2\big)\big] \\
        &= \dfrac{1}{N}\log \mathbb{E}_\sigma\Big[\prod_{I \in [N]^p}M_{J_I}(\alpha a_I(\sigma) + \sqrt{2\beta}\langle b_I(\sigma), g\rangle)\Big] \\
        &\leq \dfrac{1}{N}\log \mathbb{E}_\sigma\Big[\exp\Big(\dfrac{1}{2}\sum_{I \in [N]^p} (\alpha a_I(\sigma) + \sqrt{2\beta}\langle b_I(\sigma), g\rangle)^2\Big)\Big] \\
        &= \dfrac{N\alpha^2}{2(N - 1)} - \dfrac{N - 1}{2N}\log(1 - 2\beta p),
    \end{align*}
    where the inequality follows from \eqref{Equation:SSG} and the sum can be evaluated via \eqref{Equation:Covariance_Ord_H}. The asymptote \eqref{Equation:Ub_MGF} then follows by taking $N \to \infty$.
    
    For $\beta = -\gamma < 0$, we see by \eqref{Equation:B_D_and_B} and the independence between $H_N^{\text{hi}}(\sigma)$ and $H_N^{\text{lo}}(\sigma)$,
    \begin{align}
        &\mathbb{E}_J\big[\exp\big(\alpha H_N^J(\sigma) - \gamma \|B(\sigma)^\top\nabla H_N^J(\sigma)\|_2^2\big)\big] \notag \\
        &\leq \mathbb{E}_J\big[\exp\big(\alpha H_N^J(\sigma) - \gamma \|B_D(\sigma)^\top\nabla H_N^J(\sigma)\|_2^2\big)\big] \notag \\
        &\leq \mathbb{E}_J\big[\exp\big(\alpha H_N^{\text{hi}}(\sigma)\big)\big] \sup_{v \in \mathbb{R}^{N - |L(\sigma)| - 1}}\mathbb{E}_J\big[\exp\big(\alpha H_N^\text{lo}(\sigma) - \gamma \|v + B_D(\sigma)^\top\nabla H_N^{\text{lo}}(\sigma)\|_2^2\big)\big]. \label{Equation:Ub_MGF_2}
    \end{align}
    For the first term in \eqref{Equation:Ub_MGF_2}, we combine \eqref{Equation:SSG} and the covariance profile \eqref{Equation:Ub_MGF_1} to yield
    \begin{equation}\label{Equation:Ub_MGF_3}
        \mathbb{E}_J\big[\exp\big(\alpha H_N^{\text{hi}}(\sigma)\big)\big] \leq \exp\Big(\dfrac{\alpha^2}{2}\sum_{I \in [N]_{\text{hi}}^p} a_I(\sigma)^2\Big) = \exp\Big(\frac{\alpha^2N^2(1 - s(\eta))}{2(N - 1)}\Big).
    \end{equation}
    For the second term in \eqref{Equation:Ub_MGF_2}, we apply the upper bound in Proposition \ref{Proposition:General_Bound} with the collection of deterministic coefficients $\{a_I(\sigma)\}_{I \in [N]_{\text{lo}}^p}$ and $\{c_I(\sigma)\}_{I \in [N]_{\text{lo}}^p}$. By the covariance profile \eqref{Equation:Ub_MGF_1}, we have the parameters 
    \[d_N = N - |L(\sigma)| - 1, \quad A_N = \dfrac{N^2}{N - 1}s(\eta), \quad \tau_N^2 = p\big(1 - \eta^{p - 1}\big) \leq p.\]
    Therefore, \eqref{Equation:Ub_MGF_2}, \eqref{Equation:Ub_MGF_3}, and \eqref{Equation:UB_General} together implies that
    \begin{align}
        &\mathbb{E}_J\big[\exp\big(\alpha H_N^J(\sigma) - \gamma \|B(\sigma)^\top\nabla H_N^J(\sigma)\|_2^2\big)\big] \notag \\
        &\leq \exp\Big(\dfrac{\alpha^2N^2(1 - s(\eta))}{2(N - 1)} + \frac{\alpha^2N^2s(\eta)}{2(N - 1)} - \frac{d_N}{2}\log\big(1 + 2\gamma p\big(1 - \eta^{p - 1}\big)\big) + \oldconstant{Constant:UB_MGF}\Delta_N\Big) \notag \\
        &= \exp\Big(\dfrac{N^2\alpha^2}{2(N - 1)} + \oldconstant{Constant:UB_MGF}\Delta_N\Big)\big(1 + 2\gamma p\big(1 - \eta^{p - 1}\big)\big)^{-d_N/2}. \label{Equation:Ub_MGF_4}
    \end{align}
    
    Next we claim that $\Delta_N = o_N(N)$. Note that for all $I \in [N]_{\text{lo}}^p$,
    \[|a_I(\sigma)|^2 \leq 2N^{1 - p}|\sigma_I|^2 \leq 2N^{1 - p}\big[\sqrt{N}^{p - 2}(\log N)^2\big]^2 = 2N^{-1}(\log N)^4 = o_N(1)\]
    and
    \begin{align*}
        \|c_I(\sigma)\|_2^2 &\leq \dfrac{\|\nabla\sigma\|_2^2}{N^{p - 1}} 
        = \dfrac{1}{N^{p - 1}}\Big\|\sum_{\ell = 1}^p\Big(\prod_{k \neq \ell} \sigma_{i_k}\Big)e_{i_\ell}\Big\|_2^2\\
        &\leq \dfrac{p}{N^{p - 1}}\sum_{\ell = 1}^p\Big(\prod_{k \neq \ell} \sigma_{i_k}\Big)^2\|e_{i_\ell}\|_2^2 \leq \dfrac{p^2(\log N)^2}{N} = o_N(1).
    \end{align*}
    Therefore, 
    \begin{align}
        \Delta_N &= \sum_{I \in [N]_{\text{lo}}^p} \big(a_I(\sigma)^4 + \|c_I(\sigma)\|_2^4\big) \notag \\
        &\leq \max_{I \in [N]_{\text{lo}}^p}\big(a_I(\sigma)^2 + \|c_I(\sigma)\|_2^2\big) \sum_{I \in [N]_{\text{lo}}^p} \big(a_I(\sigma)^2 + \|c_I(\sigma)\|_2^2\big) \notag \\
        &= o_N(1)\big(N^2(N - 1)^{-1}s(\eta) + p\big(1 - \eta^{p - 1}\big)d_N(\sigma)\big) = o_N(N), \label{Equation:Ub_MGF_5}
    \end{align}
    where the convergence is uniform over $\sigma \in \sqrt{N}\mathbb{S}^{N - 1}$. 
    
    Finally, we control the last term in \eqref{Equation:Ub_MGF_4}. For convenience, set $S(x) = 1 + 2\gamma p\big(1 - x^{p - 1}\big)$ for $x \in [0, 1]$. Observe that for any fixed subset $L \subseteq [N]$ with size $|L| \geq 1$, $\eta_L(\sigma): = N^{-1} \sum_{i \in L} \sigma_i^2$ obeys $\texttt{Beta}(|L|/2, (N - |L|/2))$. Hence,
    \begin{align}
        &\mathbb{E}_\sigma\big[\big(1 + 2\gamma p\big(1 - \eta(\sigma)^{p - 1}\big)^{-d_N/2}\big] \notag \\
        &= \sum_{\ell = 0}^{r_N}\sum_{L \subseteq [N], |L| = \ell}\mathbb{E}_\sigma\big[\mathbbm{1}_{\{L(\sigma) = L\}}S(\eta_L(\sigma))^{-(N - \ell - 1)/2}\big] \notag \\
        &\leq (1 + 2\gamma p)^{-(N - 1)/2} + \sum_{\ell = 1}^{r_N} \binom{N}{\ell} \mathbb{E}_{X \sim \texttt{Beta}(\ell/2, (N - \ell)/2)}\big[S(X)^{-(N - \ell - 1)/2}\big]. \label{Equation:Ub_MGF_6}
    \end{align}
    Here,
    \begin{align}
        &\mathbb{E}_{X \sim \texttt{Beta}(\ell/2, (N - \ell)/2)}\big[S(X)^{-(N - \ell - 1)/2}\big] \notag \\
        &= \dfrac{1}{B(\ell/2, (N - \ell)/2)}\int_0^1S(x)^{-(N - \ell - 1)/2}x^{\ell/2 - 1}(1 - x)^{(N - \ell)/2 - 1}\,dx \notag \\
        &= \dfrac{1}{B(\ell/2, (N - \ell)/2)}\int_0^1\Big(\dfrac{1 - x}{S(x)}\Big)^{(N - \ell - 1)/2}x^{\ell/2 - 1}(1 - x)^{-1/2}\,dx \notag \\
        &\leq (1 + 2\gamma p)^{-(N - \ell - 1)/2}\dfrac{B(\ell/2, 1/2)}{B(\ell/2, (N - \ell)/2)}, \label{Equation:Ub_MGF_7}
    \end{align}
    where the inequality holds since
    \[S(x) - (1 + 2\gamma p)(1 - x) = x\big(1 + 2\gamma p - 2\gamma p x^{p - 2}\big) \geq 0.\]
    Plugging \eqref{Equation:Ub_MGF_7} back in \eqref{Equation:Ub_MGF_6} yields
    \begin{equation}\label{Equation:Ub_MGF_8}
        \mathbb{E}_\sigma\big[S(\eta(\sigma))^{-d_N/2}\big] \leq  (1 + 2\gamma p)^{-(N - 1)/2}\Big[1 + \sum_{\ell = 1}^{r_N} \binom{N}{\ell}(1 + 2\gamma p)^{\ell/2}\dfrac{B(\ell/2, 1/2)}{B(\ell/2, (N - \ell)/2)}\Big].
    \end{equation}
    Now, by Stirling's formula,  uniformly for $1 \leq \ell \leq r_N$,
    \[\log\binom{N}{\ell} = O_N(\ell\log N) \quad \text{and} \quad \log\Big(\frac{B(\ell/2,1/2)}{B(\ell/2,(N - \ell)/2)}\Big) = O_N(\ell\log N + \log N).\]
    Since $r_N\log N = O_N(N/\log N) = o_N(N)$, the  square bracket in \eqref{Equation:Ub_MGF_8} is $e^{o_N(N)}$. Combining \eqref{Equation:Ub_MGF_4}, \eqref{Equation:Ub_MGF_5}, \eqref{Equation:Ub_MGF_6}, and \eqref{Equation:Ub_MGF_8} leads to the desired upper bound.
\end{proof}

\subsection{Lower Bound}

\begin{proposition}
    If $p \geq 3$ and $\{J_I\}_{I \in [N]^p, N \in \mathbb{N}}$ satisfies \eqref{Equation:SSG}, then 
    \begin{equation}
        \liminf_{N \to \infty} \Lambda_N(\alpha, \beta) \geq \begin{cases}\label{Equation:LB_MGF}
            \frac{\alpha^2}{2} - \frac{1}{2}\log(1 - 2\beta p), & \beta < \frac{1}{2p}, \\
            \infty, & \beta \geq \frac{1}{2p}.
        \end{cases}
    \end{equation}
\end{proposition}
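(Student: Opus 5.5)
We prove the lower bound by restricting $\sigma$ to a set of fully delocalized configurations, on which every coefficient $a_I(\sigma)$, $b_I(\sigma)$ is uniformly small. The Gaussian computation behind \eqref{Equation:Covariance_Ord_H} then goes through with $o_N(N)$ errors. Set
\[\mathcal{G}_N=\big\{\sigma\in\sqrt N\mathbb S^{N-1}\colon \max_i|\sigma_i|\le \log N\big\}.\]
Since $\sigma\overset{d}{=}\sqrt N\,g/\|g\|_2$ with $g$ standard Gaussian, $\max_i|\sigma_i|=O(\sqrt{\log N})$ with high probability, so $\mathbb P_\sigma(\mathcal G_N)\to1$. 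It therefore suffices to show
\[\mathbb E_J\big[\exp\big(\alpha\sqrt{N/(N-1)}H_N^J(\sigma)+\beta\|B(\sigma)^\top\nabla H_N^J(\sigma)\|_2^2\big)\big]\ge \exp\Big(\frac{N\alpha^2}{2}-\frac{N}{2}\log(1-2\beta p)-o_N(N)\Big)\]
uniformly over $\sigma\in\mathcal G_N$, and then to average over $\mathcal{G}_N$.

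\emph{Coefficient bounds.} On $\mathcal G_N$ we have $a_I(\sigma)^2\le 2(\log N)^{2p}N^{1-p}$ and $\|b_I(\sigma)\|_2^2\le p^2(\log N)^{2p-2}N^{1-p}$. Hence $\rho_N\to0$. By \eqref{Equation:Covariance_Ord_H},
\[\Delta_N\le \max_I\big(a_I^2+\|b_I\|_2^2\big)\cdot\sum_I\big(a_I^2+\|b_I\|_2^2\big)=O\big((\log N)^{2p}N^{2-p}\big),\]
which is $o_N(1)$ precisely because $p\ge3$. This is where the hypothesis $p\ge 3$ enters.

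\emph{Case $\beta=-\gamma<0$.} Apply Proposition \ref{Proposition:General_Bound}(ii) with $v=0$ and the coefficients $a_I(\sigma),b_I(\sigma)$ over all $I\in[N]^p$. The parameters are $A_N=N^2/(N-1)$, $d_N=N-1$ and $\tau_N^2=p$. This yields
\[\log R_N=\frac{\alpha^2N^2}{2(N-1)}-\frac{N-1}{2}\log(1+2\gamma p)+o_N(N).\]
One point needs checking: the $o_N(N)$ must be uniform over $\sigma\in\mathcal G_N$. Inspecting the proof of that proposition, the error depends on $\sigma$ only through $\rho_N$, $\Delta_N$ and $\tau_N$, which we bound uniformly on $\mathcal G_N$. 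I expect this uniformity bookkeeping to be the main (mild) obstacle.

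\emph{Case $0\le\beta<1/(2p)$.} Linearize as in the upper bound, writing the expectation as
\[\mathbb E_g\Big[\prod_I M_{J_I}(z_I)\Big],\qquad z_I=\alpha a_I+\sqrt{2\beta}\langle b_I,g\rangle\in\mathbb R,\]
and use $M_{J_I}\ge0$ to restrict to $\{\|g\|_2^2\le KN\}$. On this event and for $\sigma\in\mathcal G_N$,
\[|z_I|\le|\alpha||a_I|+\sqrt{2\beta KN}\,\|b_I\|_2=O\big((\log N)^{p}N^{-(p-2)/2}\big)\to0,\]
again using $p\ge3$. The cumulant expansion with $\mathbb E J=\mathbb EJ^3=0$ and uniformly bounded moments gives $\log M_{J_I}(z)\ge z^2/2-C z^4$ for $|z|\le\delta$. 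Moreover,
\[\sum_I z_I^4\le\max_I z_I^2\sum_I z_I^2=o_N(1)\cdot O(N)\]
on the event. Since $\sum_I z_I^2=\alpha^2N^2/(N-1)+2\beta p\|g\|_2^2$ by \eqref{Equation:Covariance_Ord_H}, we obtain the lower bound
\[e^{\alpha^2N^2/(2(N-1))-o_N(N)}\,\mathbb E_g\big[e^{\beta p\|g\|_2^2}\mathbbm 1_{\{\|g\|_2^2\le KN\}}\big].\]
Under the tilted measure $\|g\|_2^2/N\to1/(1-2\beta p)$. So choosing $K>1/(1-2\beta p)$, the truncated expectation equals $(1-2\beta p)^{-(N-1)/2}e^{o_N(N)}$, which gives the claimed bound.

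\emph{Case $\beta\ge1/(2p)$.} The exponent is nondecreasing in $\beta$ because $\|B(\sigma)^\top\nabla H_N^J(\sigma)\|_2^2\ge0$. Hence $\liminf_N\Lambda_N(\alpha,\beta)\ge\frac{\alpha^2}{2}-\frac12\log(1-2\beta'p)$ for every $\beta'<1/(2p)$, and letting $\beta'\uparrow1/(2p)$ gives $+\infty$.
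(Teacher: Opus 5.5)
Your proposal is correct and follows essentially the same route as the paper: restrict to $\|\sigma\|_\infty\le\log N$, linearize the quadratic term with an auxiliary Gaussian $g$ and expand $\log M_{J_I}$ when $\beta\ge 0$, and invoke Proposition~\ref{Proposition:General_Bound}(ii) when $\beta<0$. The differences are cosmetic. The paper truncates to $\{\|g\|_\infty\le M\}$, so the truncated Gaussian integral factorizes and letting $M\to\infty$ also covers $\beta\ge 1/(2p)$; it also uses a cubic rather than a quartic remainder. Your $\ell^2$-ball truncation with the tilted law of large numbers and your monotonicity-in-$\beta$ argument work equally well, and your remark on uniformity over the delocalized set makes explicit a point the paper leaves implicit.
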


\begin{proof}
    Set the remainder $R_I(z) = \log M_{J_I}(z) - z^2/2$. Since $\mathbb{E}[J_I] = 0$, $\mathbb{E}[J_I^2] = 1$, there exists $\delta, C > 0$ such that
    \begin{equation}\label{Equation:MGF_Control}
        \sup_{N \in \mathbb{N}}\sup_{I \in [N]^p}|R_I(z)| \leq C|z|^3 \quad \text{for all} \quad |z| \leq \delta.
    \end{equation}
    First, we argue for $\beta \geq 0$. By introducing an independent random vector $g \sim \texttt{Normal}(0, I_{N - 1})$ and denoting $z_I(\sigma) = \alpha a_I(\sigma) + \sqrt{2\beta}\langle b_I(\sigma), g\rangle \in \mathbb{R}$, we again linearize the quadratic term in the joint moment generating function: 
    \begin{align}
        \Lambda_N(\alpha, \beta) &= \dfrac{1}{N}\log \mathbb{E}_{\sigma, g}\Big[\exp\Big(\sum_{I \in [N]^p} \log M_{J_I}(z_I(\sigma))\Big)\Big] \notag \\
        &= \dfrac{1}{N}\log \mathbb{E}_{\sigma, g}\Big[\exp\Big(\dfrac{1}{2}\sum_{I \in [N]^p} z_I(\sigma)^2 + \sum_{I \in [N]^p} R_I(z_I(\sigma))\Big)\Big] \notag \\
        &= \dfrac{N\alpha^2}{2(N - 1)} + \dfrac{1}{N}\log \mathbb{E}_{\sigma, g}\Big[\exp\Big(\beta p\|g\|_2^2 + \sum_{I \in [N]^p}R_I(z_I(\sigma))\Big)\Big], \label{Equation:MGF_Lower_1}
    \end{align}
    where \eqref{Equation:MGF_Lower_1} holds by the covariance profile \eqref{Equation:Covariance_Ord_H}. Consider for $M \geq 1$ the events
    \[\mathcal{E}_N^1 = \big\{\sigma \in \sqrt{N}\mathbb{S}^{N - 1} \colon \|\sigma\|_\infty \leq \log N\} \quad \text{and} \quad \mathcal{E}_N^2(M) = \{g \in \mathbb{R}^{N - 1} \colon \|g\|_\infty \leq M\}.\]
    Note that $\mathbb{P}_{\sigma}(\mathcal{E}_N^1) \to 1$ as $N \to \infty$. We claim that the argument of the remainder $|z_I(\sigma)| = o_N(1)$ on the event $\mathcal{E}_N^1 \cap \mathcal{E}_N^2(M)$. Indeed, on $\mathcal{E}_N^1 \cap \mathcal{E}_N^2(M)$,
    \[|\alpha a_I(\sigma)| = \dfrac{|\alpha||\sigma_I|}{N^{(p - 1)/2}} \leq \dfrac{|\alpha|\|\sigma\|_\infty^p}{N^{(p - 1)/2}} \leq \dfrac{|\alpha|(\log N)^p}{N^{(p - 1)/2}} = o_N(1)\]
    and also
    \begin{align*}
        \big|\sqrt{2\beta}\langle b_I(\sigma), g\rangle \big| &\leq \dfrac{\sqrt{2\beta}}{N^{(p - 1)/2}}\Big|\sum_{j = 1}^p \sigma_{I \setminus \{i_j\}}(B(\sigma)g)_{i_j}\Big| \\
        &\leq \dfrac{\sqrt{2\beta}p\|\sigma\|_\infty^{p - 1}\|B(\sigma)g\|_\infty}{N^{(p - 1)/2}} \\
        &\leq \dfrac{\sqrt{2\beta}p(\log N)^{p - 1}\|g\|_2}{N^{(p - 1)/2}} = O\Big(\dfrac{(\log N)^{p - 1}}{N^{(p - 2)/2}}\Big) = o_N(1),
    \end{align*}
    where we recall here that $p \geq 3$. Since $|z_I(\sigma)| = o_N(1)$, we have by \eqref{Equation:MGF_Control} that for $N$ large enough, there exists constant $\newconstant\label{Constant:Lower_1} = \oldconstant{Constant:Lower_1}(\alpha, \beta) > 0$ such that
    \begin{align*}
        &\Big|\sum_{I \in [N]^p} R_I(\alpha a_I(\sigma) + \sqrt{2\beta}\langle b_I(\sigma), g\rangle)\Big| \\
        &\leq \oldconstant{Constant:Lower_1}\sum_{I \in [N]^p} \big(|a_I(\sigma)|^3 + |\langle b_I(\sigma), g\rangle|^3\big) \\
        &\leq \oldconstant{Constant:Lower_1}\max_{I \in [N]^p}  \big(|a_I(\sigma)| + |\langle b_I(\sigma), g\rangle|\big) \sum_{I \in [N]^p}  \big(|a_I(\sigma)|^2 + |\langle b_I(\sigma), g\rangle|^2\big) \\
        &= o_N(1)\Big(\dfrac{N^2}{N - 1} + p\|g\|_2^2\Big) = o_N(N),
    \end{align*}
    where the last equality holds by the covariance profile \eqref{Equation:Covariance_Ord_H} and all estimates are uniform on the event $\mathcal{E}_N^1 \cap \mathcal{E}_N^2(M)$. We then see from \eqref{Equation:MGF_Lower_1} that
    \begin{align*}
        \liminf_{N \to \infty}\Lambda_N(\alpha, \beta) &\geq \dfrac{\alpha^2}{2} + \liminf_{N \to \infty} \dfrac{1}{N}\log \mathbb{E}_{\sigma, g}\big[e^{\beta p\|g\|_2^2 + o_N(N)}\mathbbm{1}_{\mathcal{E}_N^1 \cap \mathcal{E}_N^2(M)}\big] \\
        &= \dfrac{\alpha^2}{2} + \liminf_{N \to \infty} \dfrac{1}{N}\log\Big(\dfrac{1}{\sqrt{2\pi}}\int_{-M}^M e^{(\beta p - \frac{1}{2})x^2}\,dx\Big)^N \\
        &= \dfrac{\alpha^2}{2} + \log\Big(\dfrac{1}{\sqrt{2\pi}}\int_{-M}^M e^{(\beta p - \frac{1}{2})x^2}\,dx\Big)
    \end{align*}
    and may deduce our desired lower bound \eqref{Equation:LB_MGF} by taking $M \to \infty$.

    For $\beta = -\gamma < 0$,  we apply the lower bound in Proposition \ref{Proposition:General_Bound} with the collection of deterministic coefficients $\{a_I(\sigma)\}_{I \in [N]^p}$ and $\{b_I(\sigma)\}_{I \in [N]^p}$. By the covariance profile \eqref{Equation:Covariance_Ord_H}, we have the parameters 
    \[d_N = N - 1, \quad A_N = \dfrac{N^2\alpha^2}{N - 1}, \quad \tau_N^2 = p.\]
    Moreover, on the event
    \[\mathcal{E}_N = \{\sigma \in \sqrt{N}\mathbb{S}^{N - 1} \colon \|\sigma\|_\infty \leq \log N\}\]
    which satisfies $\mathbb{P}_\sigma(\mathcal{E}_N) \to 1$, we have, by a similar argument as in the case $\beta \geq 0$, the bound
    \[\rho_N = O_N\Big(\dfrac{(\log N)^p}{N^{(p - 1)/2}}\Big) = o_N(1) \quad \text{and} \quad \Delta_N = O_N(\rho_N^2N) = O_N\Big(\dfrac{(\log N)^{2p}}{N^{p - 2}}\Big) = o_N(1).\]
    Therefore, \eqref{Equation:LB_General} implies that
    \begin{align*}
        \Lambda_N(\alpha, \beta) &\geq \dfrac{1}{N}\log \mathbb{E}_\sigma\Big[\mathbbm{1}_{\mathcal{E}_N}\mathbb{E}_g\Big[\exp\Big(\alpha\sum_{I \in [N]^p} a_I(\sigma)J_I - \gamma\Big\|\sum_{I \in [N]^p} b_I(\sigma)J_I\Big\|_2^2\Big)\Big] \\
        &= \dfrac{N\alpha^2}{2(N - 1)} - \dfrac{N - 1}{2N}\log(1 + 2\gamma p) + \dfrac{1}{N}\log\mathbb{P}_\sigma(\mathcal{E}_N) + o_N(1)
    \end{align*}
    and this concludes the proof.
\end{proof}

\section{Proof of Main Results}\label{Section:Main}

\begin{proof}[\bf Proof of Theorem \ref{Theorem:LDP}] Recall from \eqref{Equation:Def_X} and \eqref{Equation:Def_y} the following notations:
\[X_N^J(\sigma) = \frac{H_N^J(\sigma)}{\sqrt{N(N-1)}} \quad \text{and} \quad y_N^J(\sigma) = \frac{\|B(\sigma)^\top\nabla H_N^J(\sigma)\|_2^2}{N}.\]
By \eqref{Equation:Convergence_of_MGF}, the limiting logarithmic moment generating function of $(X_N^J(\sigma), y_N^J(\sigma))$ is
\[\Lambda(\alpha, \beta) = \lim_{N \to \infty} \Lambda_N(\alpha, \beta) = \begin{cases}
    \frac{\alpha^2}{2} - \frac{1}{2}\log(1 - 2\beta p), & \beta < \frac{1}{2p}, \\
    \infty, & \beta \geq \frac{1}{2p}.
\end{cases}\]
Its effective domain is $\mathcal{D}_\Lambda = \mathbb{R} \times (-\infty,1/(2p))$. The function $\Lambda$ is differentiable on $\mathcal{D}_\Lambda$, lower semicontinuous on $\mathbb{R}^2$, and
\[\|\nabla\Lambda(\alpha,\beta)\|_2 = \Big\|\Big(\alpha, \dfrac{p}{1 - 2p\beta}\Big)\Big
\|_2 \to \infty \quad \text{as} \quad \beta \uparrow \dfrac{1}{2p},\]
so $\Lambda$ is essentially smooth (see \cite[Definition 2.3.5]{dembo2010large}). Therefore, by G\"{a}rtner-Ellis theorem (see \cite[Theorem 2.3.6]{dembo2010large}), $(X_N^J(\sigma), y_N^J(\sigma))$ satisfies the LDP at speed $N$ with the good rate function
\[\Lambda^\ast(a, b) = \sup_{\alpha \in \mathbb{R}, \beta < 1/(2p)}\Big[(\alpha a + \beta b) - \Big(\dfrac{\alpha^2}{2} - \dfrac{1}{2}\log(1 - 2\beta p)\Big)\Big]. \]
For $b > 0$, the optimizers are $\alpha = a$ and $\beta = (b-p)/(2pb)$, while for $b \leq 0$ the supremum is infinite. Hence $\Lambda^\ast = \mathfrak{I}_2$, which proves Theorem \ref{Theorem:LDP}.
\end{proof}

\begin{proof}[\bf Proof of Theorem \ref{Theorem:Universality_Main}]  
    Since $Z_N := Z/\sqrt{N-1}$ is independent of $(X_N^J(\sigma), y_N^J(\sigma))$ and satisfies the LDP at speed $N$ with the good rate function $c^2/2$, the sequence
    \begin{equation}\label{Equation:Joint_3}
        \big(X_N^J(\sigma), y_N^J(\sigma), Z_N\big) \in \mathcal{D} := \mathbb{R} \times  [0,\infty) \times \mathbb{R}
    \end{equation}
    satisfies the LDP at speed $N$ with the good rate function
    \[\mathfrak{I}_3(a, b, c) = \mathfrak{I}_2(a, b) + \dfrac{c^2}{2} = \begin{cases}
        \frac{a^2}{2} + \frac{c^2}{2} + \frac{b - p}{2p} + \frac{1}{2}\log\big(\frac{p}{b}\big), & b > 0, \\
        \infty, & b \leq 0.
    \end{cases}\]
    
    Next, we compute the exponential rate of $\ell_N(x,\delta)$, as defined in \eqref{Equation:ell_N} for a fixed $x \in \mathbb{R}$ and $\delta > 0$. Set
    \[T(a,c) = \sqrt{\dfrac{p}{p - 1}}\big(\sqrt{t}\,a + \sqrt{1 - t}\,c\big).\]
    The set $\{(a,b,c) \in \mathcal{D} \colon |T(a, c) - x|\leq \delta\}$  is closed in $\mathcal{D}$, and the function $(a,b,c) \mapsto -(tb)/(2p(1 - t))$ is continuous and bounded above on $\mathcal{D}$. Therefore, by Varadhan's lemma, more precisely, see \cite[Exercise 4.3.11]{dembo2010large}, we obtain 
    \begin{align}
        \limsup_{N\to\infty}\frac{1}{N}\log\ell_N(x, \delta) &= \limsup_{N\to\infty}\frac{1}{N}\log \mathbb{E}_{\sigma, J, Z}\Big[\exp\Big(N\Big(-\frac{ty_N^J(\sigma)}{2p(1 - t)}\Big)\Big) \mathbbm{1}_{\big\{\big|T(X_N^J(\sigma), Z_N) - x \big| \leq \delta \big\}}\Big] \notag \\
        &\leq \sup_{\substack{|T(a,c)-x|\leq \delta \\ b > 0}} \Big\{-\frac{tb}{2p(1-t)} - \mathfrak{I}_3(a,b,c) \Big\} \notag \\
        &= \frac{1}{2}\log(1 - t) - \frac{p - 1}{2p}(|x|-\delta)_+^2. \label{Equation:Varadhan_Limit}
    \end{align}
Indeed, the optimization over $b$ is attained at $b = p(1-t)$ and has value $\log(1 - t)/2$, while
\[\inf_{|T(a, c) - x| \leq \delta} \frac{a^2 + c^2}{2} = \frac{p - 1}{2p}(|x| - \delta)_+^2.\]
Similarly, applying Varadhan's lemma to the open set $\{(a, b, c)\in \mathcal{D} \colon |T(a, c) - x| < \delta\}$, we deduce  
\[\liminf_{N\to\infty}\frac{1}{N}\log\ell_N(x,\delta) \geq \sup_{\substack{|T(a,c) - x| < \delta \\ b > 0}} \Big\{ -\frac{tb}{2p(1-t)} -\mathfrak I_3(a,b,c) \Big\},\]
which has the same optimal value as in \eqref{Equation:Varadhan_Limit}. Therefore, 
\begin{equation}\label{Equation:Ell_Exact_Limit}
    \lim_{N \to \infty} \frac{1}{N}\log\ell_N(x,\delta) = \frac{1}{2}\log(1 - t) - \frac{p-1}{2p}(|x|-\delta)_+^2.
\end{equation}

Recall the quantity $\mathcal{L}_N(u)$ from \eqref{Equation:L_N(u)}. Combining Proposition \ref{Theorem:Upper} with \eqref{Equation:Ell_Exact_Limit} yields
\[\limsup_{N\to\infty}\frac{1}{N} \log\mathcal L_N(u) \leq \frac{1}{2}\log(1 - t) + \sup_{x \leq 2u/ E_\infty} \Big(\Omega(x) - \frac{p-1}{2p}x^2 \Big).\]
Indeed, the functions 
\[F_\delta(x) = \Omega(x) - \dfrac{p - 1}{2p}(|x|-\delta)_+^2\] 
converge uniformly to $F_0(x)$ on a compact set containing all their maximizers for $0 \leq \delta \leq 1$. Similarly, Proposition~\ref{Theorem:Lower} and
\eqref{Equation:Ell_Exact_Limit} give
\[\liminf_{N\to\infty}\frac{1}{N}\log\mathcal L_N(u) \geq \frac{1}{2}\log(1-t) + \sup_{x < 2u/ E_\infty} \Big( \Omega(x)-\frac{p-1}{2p}x^2 \Big).\]
Since $F_0(x)$ is continuous, its suprema on $x < 2u/ E_\infty$ and $x\leq 2u/ E_\infty$ are the same. It follows
\begin{equation}\label{Equation:Final_LN_Limit}
    \lim_{N\to\infty}\frac{1}{N}\log\mathcal L_N(u) = \frac{1}{2}\log(1-t) + \sup_{x\le 2u/ E_\infty} \Big( \Omega(x)-\frac{p-1}{2p}x^2 \Big).
\end{equation}

Combining \eqref{Equation:Final_LN_Limit} with the Kac-Rice formula \eqref{Equation:Kac_Rice}, we obtain
\begin{equation}\label{Equation:Final_Variational_Formula}
    \lim_{N \to \infty}\frac{1}{N}\log\mathbb{E}_{J,G}\big[\text{Crt}_t((-\infty, u))\big] = \frac{1}{2}\log(p - 1) + \frac{1}{2} + \sup_{x \leq 2u/E_\infty} \Big(\Omega(x) - \frac{p-1}{2p}x^2 \Big).
\end{equation}
The right-hand side of \eqref{Equation:Final_Variational_Formula} does not depend on $t$, and therefore equals the corresponding expression at $t = 0$. This proves Theorem \ref{Theorem:Universality_Main}.
\end{proof}

\begin{appendices}

\section{Useful Inequalities}

\begin{lemma}[Gaussian Brascamp-Lieb inequality]\label{lem:gaussian-BL}
    Let $u_1,\ldots,u_m \in \mathbb{R}^d$ be unit vectors and let $\lambda_1, \dots, \lambda_m > 0$ satisfy
    \[\sum_{j = 1}^m\lambda_j u_ju_j^\top = I_d.\]
    If $g \sim \texttt{Normal}(0,I_d)$ and $Z \sim \texttt{Normal}(0, 1)$, then
    for any nonnegative measurable functions $f_j\colon \mathbb{R} \to [0, \infty)$ with $\mathbb{E}[f_j(Z)] < \infty$,
    \begin{equation}\label{Equation:GBL}
        \mathbb{E}\Big[\prod_{j = 1}^m f_j(\langle u_j, g\rangle)^{\lambda_j}\Big] \leq \prod_{j = 1}^m \mathbb{E}[f_j(Z)]^{\lambda_j}.
    \end{equation}
\end{lemma}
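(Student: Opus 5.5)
The plan is to reduce the statement to Ball's geometric Brascamp--Lieb inequality in Lebesgue form and then prove that by Barthe's transport argument. The first step is bookkeeping. Taking the trace of $\sum_j \lambda_j u_ju_j^\top = I_d$ gives $\sum_j \lambda_j = d$. Applying the same identity to $x$ gives
\[\|x\|_2^2 = \sum_j \lambda_j \langle u_j, x\rangle^2 \quad \text{for all } x \in \mathbb{R}^d.\]
Hence the Gaussian density factorizes as
\[(2\pi)^{-d/2} e^{-\|x\|_2^2/2} = \prod_{j=1}^m \big((2\pi)^{-1/2} e^{-\langle u_j, x\rangle^2/2}\big)^{\lambda_j}.\]
Set $F_j(s) = f_j(s)(2\pi)^{-1/2} e^{-s^2/2}$. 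Then the left side of \eqref{Equation:GBL} equals $\int_{\mathbb{R}^d} \prod_j F_j(\langle u_j, x\rangle)^{\lambda_j}\,dx$. Moreover $\int_{\mathbb{R}} F_j = \mathbb{E}[f_j(Z)]$. So it suffices to show
\[\int_{\mathbb{R}^d} \prod_{j=1}^m F_j(\langle u_j, x\rangle)^{\lambda_j}\,dx \leq \prod_{j=1}^m \Big(\int_{\mathbb{R}} F_j\Big)^{\lambda_j}\]
for nonnegative integrable $F_j$. This is Ball's geometric Brascamp--Lieb inequality, and one could simply cite Ball/Barthe here.

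To make the argument self-contained, I would follow Barthe's proof. By monotone convergence and homogeneity, reduce to $F_j$ that are continuous, strictly positive and have $\int F_j = 1$. Let $T_j$ be the increasing transport map pushing the standard Gaussian density $\gamma$ on $\mathbb{R}$ to $F_j$, so that $F_j(T_j(s))\,T_j'(s) = \gamma(s)$. Define
\[\Theta(y) = \sum_j \lambda_j T_j(\langle u_j, y\rangle)\,u_j,\]
whose differential is $d\Theta(y) = \sum_j \lambda_j T_j'(\langle u_j, y\rangle)\,u_ju_j^\top$. This is a symmetric positive definite matrix, so $\Theta$ is the gradient of a strictly convex function and hence injective. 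The key algebraic input is Ball's determinant inequality
\[\det\Big(\sum_j \lambda_j t_j u_ju_j^\top\Big) \geq \prod_j t_j^{\lambda_j} \quad \text{for } t_j > 0.\]
I would prove it via Cauchy--Binet: expand the determinant as $\sum_{|S|=d} \prod_{j\in S}(\lambda_j t_j)\det(u_S)^2$. The weights $\prod_{j\in S}\lambda_j\det(u_S)^2$ sum to $1$ by applying Cauchy--Binet to $I_d$, and each $j$ appears with total weight $\lambda_j$. Weighted AM--GM then gives the inequality.

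With these in hand, change variables $x = \Theta(y)$ and bound the left side by
\[\int \prod_j F_j(\langle u_j, \Theta(y)\rangle)^{\lambda_j}\,\det d\Theta(y)\,dy.\]
The step I expect to be the main obstacle is replacing $\langle u_j, \Theta(y)\rangle$ by $T_j(\langle u_j, y\rangle)$, since these differ in general. Barthe handles this with a second application of the same convexity: one shows that the product $\prod_j F_j(\langle u_j,\Theta(y)\rangle)^{\lambda_j}$ is controlled through the Gaussian-side identity. I would try first to reproduce his argument directly, where the inequality is run in the dual, reverse direction. If that proves awkward, I would fall back on citing the geometric Brascamp--Lieb inequality (Ball 1989, Barthe 1998), or use the heat-flow monotonicity proof of Carlen--Lieb--Loss and Bennett--Carbery--Christ--Tao. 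In the heat-flow proof, $\int \prod_j (P_s F_j)(\langle u_j, x\rangle)^{\lambda_j}\,dx$ is nondecreasing in $s$, and as $s \to \infty$ the Gaussian case gives equality.
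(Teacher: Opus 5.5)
Your argument is correct, but it takes a different route from the paper. The paper gives no argument of its own: it quotes \cite[Theorem 4]{chen2015improved}, where the inequality is already formulated for Gaussian measure and for projections of general rank, and takes the rank-one case. You instead move to Lebesgue measure. The frame identity $\|x\|_2^2=\sum_j\lambda_j\langle u_j,x\rangle^2$ and $\sum_j\lambda_j=d$ factor the standard Gaussian density as $\prod_j\gamma(\langle u_j,x\rangle)^{\lambda_j}$. With $F_j=f_j\gamma$, the claim is then exactly Ball's geometric Brascamp--Lieb inequality. Citing Ball/Barthe at that point, or the heat-flow proof of Carlen--Lieb--Loss and Bennett--Carbery--Christ--Tao, is a complete proof at the same level of rigor as the paper's one-line citation. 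The paper's citation needs no transfer and covers higher-rank projections. Your reduction shows in one line that the Gaussian form is equivalent to the classical Lebesgue form, so any standard proof of the latter applies. Your Cauchy--Binet plus weighted AM--GM proof of Ball's determinant inequality is also correct.

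If you want the self-contained transport proof, however, your set-up is oriented the wrong way and would not close. You take $T_j$ pushing $\gamma$ onto $F_j$ and substitute $x=\Theta(y)$ in $\int\prod_jF_j(\langle u_j,x\rangle)^{\lambda_j}\,dx$. This puts $\det d\Theta$ on the side you must bound from above, while Ball's inequality only bounds it from below; it is the set-up for Barthe's \emph{reverse} inequality. For the direct inequality, normalize $\int F_j=1$ and let $T_j$ push $F_j$ onto $\gamma$, so that $F_j(s)=T_j'(s)\,\gamma(T_j(s))$. Then integrate in $y$ without substituting. With $\theta_j(y)=T_j(\langle u_j,y\rangle)$, Ball's inequality gives
\[\int\prod_jF_j(\langle u_j,y\rangle)^{\lambda_j}\,dy\le\int\det d\Theta(y)\prod_j\gamma\big(\theta_j(y)\big)^{\lambda_j}\,dy.\]
The missing ingredient is not an identification of $\langle u_j,\Theta(y)\rangle$ with $\theta_j(y)$. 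It is the inequality $\|\sum_j\lambda_j\theta_ju_j\|_2^2\le\sum_j\lambda_j\theta_j^2$, obtained by Cauchy--Schwarz against the frame identity. Together with $\sum_j\lambda_j=d$, it gives $\prod_j\gamma(\theta_j(y))^{\lambda_j}\le(2\pi)^{-d/2}e^{-\|\Theta(y)\|_2^2/2}$. Injectivity of $\Theta$ and the change of variables $x=\Theta(y)$ then bound the right-hand side by $1$.

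One smaller point: the reduction to continuous positive $F_j$ should not rely on monotone convergence. A general measurable $F_j$ cannot be approximated monotonically from below by continuous functions. Instead, add $\epsilon\gamma$ for positivity, approximate in $L^1$ by continuous functions, pass to an a.e.\ convergent subsequence, and apply Fatou's lemma.
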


\begin{proof}
     This is a rank-one specialization of \cite[Theorem 4]{chen2015improved}.
\end{proof}

\begin{lemma}
    If $X_1, \dots, X_n$ are (not necessarily independent) random variables satisfying \eqref{Equation:SG} with uniform $\kappa_{\text{SG}}$, there exists an absolute constant $\newconstant\label{Constant:Sup_SG} > 0$ such that for all $k \in \mathbb{N}$,
    \begin{equation}\label{Equation:Sup_SG}
        \mathbb{E}\Big[\Big|\max_{1 \leq j \leq n} X_j\Big|^k\Big]^{1/k} \leq \oldconstant{Constant:Sup_SG}\kappa_{\text{SG}}\sqrt{\log n + k}.
    \end{equation}
\end{lemma}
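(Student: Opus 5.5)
The statement to prove is the maximal inequality \eqref{Equation:Sup_SG}. The plan is to reduce it to a tail bound for the maximum, obtained from the moment generating function control \eqref{Equation:SG} plus a union bound, and then integrate the tail. Since the variables need not be independent, only marginal information is available, and the union bound is the natural tool. The reading intended in the paper is that $|\max_j X_j|$ is dominated by $\max_j |X_j|$, so we bound the latter.

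\textbf{Step 1 (tails of each variable).} For each $j$ and each $s>0$, Markov's inequality applied to $e^{\lambda X_j}$ and to $e^{-\lambda X_j}$, optimized at $\lambda = s/\kappa_{\text{SG}}^2$, gives
\[\mathbb{P}(|X_j|\geq s)\leq 2e^{-s^2/(2\kappa_{\text{SG}}^2)}.\]

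\textbf{Step 2 (union bound).} Summing over $j$ gives
\[\mathbb{P}\Big(\max_j|X_j|\geq s\Big)\leq \min\big\{1,\,2n e^{-s^2/(2\kappa_{\text{SG}}^2)}\big\}.\]

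\textbf{Step 3 (integrate the tail).} Set $s_0=\kappa_{\text{SG}}\sqrt{2\log(2n)}$ and use
\[\mathbb{E}\Big[\max_j|X_j|^k\Big]=\int_0^\infty k s^{k-1}\,\mathbb{P}\Big(\max_j|X_j|\ge s\Big)\,ds.\]
Split the integral at $s_0$.
\begin{itemize}
\item On $[0,s_0]$, bound the probability by $1$; the contribution is at most $s_0^k$.
\item On $[s_0,\infty)$, write $s=s_0+r$. Then $s^2\ge s_0^2+r^2$, so $2n e^{-s^2/(2\kappa_{\text{SG}}^2)}\le e^{-r^2/(2\kappa_{\text{SG}}^2)}$. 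Using $(s_0+r)^{k-1}\le 2^{k-1}(s_0^{k-1}+r^{k-1})$, the contribution is bounded by $2^{k}\big(s_0^{k-1}\kappa_{\text{SG}}\sqrt{k}+\kappa_{\text{SG}}^k k\,\Gamma(k/2)\big)$ up to constants, where the last term comes from a standard Gaussian moment computation.
\end{itemize}

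\textbf{Step 4 (take $k$-th roots).} By Stirling, $(k\Gamma(k/2))^{1/k}\le C\sqrt{k}$, and $(s_0^{k-1}\sqrt{k})^{1/k}\le C(s_0+\sqrt{k})$. Hence
\[\mathbb{E}\Big[\max_j|X_j|^k\Big]^{1/k}\le C\kappa_{\text{SG}}\big(\sqrt{\log(2n)}+\sqrt{k}\big)\le C'\kappa_{\text{SG}}\sqrt{\log n+k}.\]
The last inequality uses $\log 2\le k$ for $k\ge1$ together with $\sqrt a+\sqrt b\le\sqrt{2(a+b)}$.

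\textbf{Main obstacle.} No step is genuinely hard. The only care needed is in Step 3: the constant must stay absolute, uniform in $n$ and $k$, and the $\log 2$ coming from the two-sided bound must be absorbed into $k$, as done in Step 4. The $k$-dependence is handled by Gaussian moment growth $\sqrt{k}$, which is exactly what the claimed form $\sqrt{\log n+k}$ requires.
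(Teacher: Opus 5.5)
Your proof is correct and follows essentially the paper's route: a union bound giving $\mathbb{P}(\max_j|X_j|\ge s)\le 2ne^{-s^2/(2\kappa_{\text{SG}}^2)}$, a threshold at $\kappa_{\text{SG}}\sqrt{2\log(2n)}$, and Gaussian moments for the excess above that threshold. The paper handles Step 3 slightly more cleanly by setting $Y=(|\max_j X_j|-a)_+$ and using Minkowski, $\|\max_j X_j\|_{\mathcal{L}^k}\le a+\|Y\|_{\mathcal{L}^k}$, which avoids expanding $(s_0+r)^{k-1}$; incidentally, in that expansion the prefactor of the $s_0^{k-1}$ term should be $k$ rather than $\sqrt{k}$, which is harmless after taking $k$-th roots.
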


\begin{proof}
    By the condition \eqref{Equation:SG} and union bound, we have
    \[\mathbb{P}\Big(\Big|\max_{1 \leq j \leq n} X_j\Big| \geq h\Big) \leq 2ne^{-h^2/2\kappa_{\text{SG}}^2}.\]
    Define $a := (2\kappa_{\text{SG}}^2\log(2n))^{1/2}$ and $Y = (|\max_{1 \leq j \leq n}X_j| - a)_+$. Then,
    \begin{align*}
        \mathbb{E}[|Y|^k] &= k\int_0^\infty h^{k - 1}\mathbb{P}(|Y| \geq h)\,dh \\
        &= k\int_0^\infty h^{k - 1}\mathbb{P}\Big(\Big|\max_{1 \leq j \leq n} X_j\Big| \geq a + h\Big)\,dh \notag \\
        &\leq 2nk\int_0^\infty h^{k - 1}e^{-(a + h)^2/2\kappa_{\text{SG}}^2}\,dh \leq k\int_0^\infty h^{k - 1}e^{-h^2/2\kappa_{\text{SG}}^2}\,dh = k\cdot 2^{k/2 - 1}\kappa_{\text{SG}}^k\Gamma\Big(\frac{k}{2}\Big).
    \end{align*}
    Since $\Gamma(k/2)^{1/k} = O_k\big(\sqrt{k}\big)$ as $k \to \infty$, we have
    \[\Big\|\max_{1 \leq j \leq n} X_j\Big\|_{\mathcal{L}^k} \leq a + \|Y\|_{\mathcal{L}^k} = O\big(\kappa_{\text{SG}}\sqrt{\log n + k}\big)\]
    and this ends the proof for the lemma.
\end{proof}

\begin{lemma}[{\cite[Lemma 6.3]{arous2001aging}}]\label{Lemma:Tail_of_Norm}
    For $h \geq 8$ and all $N \in \mathbb{N}$,
    \begin{equation}\label{Equation:Tail_of_GOE}
        \mathbb{P}(\|\text{GOE}_N\|_{\text{op}} \geq h) \leq e^{-Nh^2/9}.
    \end{equation}
\end{lemma}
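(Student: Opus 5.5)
The plan is a direct $\varepsilon$-net argument for the quadratic form of $\text{GOE}_N$. Since the matrix is symmetric, $\|\text{GOE}_N\|_{\text{op}} = \sup_{x \in \mathbb{S}^{N-1}} |\langle \text{GOE}_N x, x\rangle|$. This is the same reduction used for $\nabla^2 H_N^J(\sigma)$ in the proof of Lemma~\ref{Proposition:Op_Bound}: for a $\delta$-net $\mathcal{N}_\delta$ of $\mathbb{S}^{N-1}$ one has
\[
\|\text{GOE}_N\|_{\text{op}} \leq (1 - 2\delta)^{-1} \max_{y \in \mathcal{N}_\delta} |\langle \text{GOE}_N y, y\rangle|.
\]
A volumetric construction gives $|\mathcal{N}_\delta| \leq (1 + 2/\delta)^N \leq (3/\delta)^N$ for $\delta \leq 1$.

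\textbf{Step 1 (fixed direction).} For a fixed unit vector $y$, the variable $\langle \text{GOE}_N y, y\rangle = \sum_{i,j} (\text{GOE}_N)_{ij} y_i y_j$ is centered Gaussian. By the covariance $\mathbb{E}[g_{ij} g_{k\ell}] = (\delta_{ik}\delta_{j\ell} + \delta_{i\ell}\delta_{jk})/N$, its variance is
\[
\frac{1}{N}\sum_{i,j,k,\ell} y_i y_j y_k y_\ell \bigl(\delta_{ik}\delta_{j\ell} + \delta_{i\ell}\delta_{jk}\bigr) = \frac{2}{N}\|y\|_2^4 = \frac{2}{N}.
\]
Hence $\mathbb{P}(|\langle \text{GOE}_N y, y\rangle| \geq s) \leq 2e^{-Ns^2/4}$.

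\textbf{Step 2 (union bound).} Combining the net reduction with Step 1 gives
\[
\mathbb{P}\bigl(\|\text{GOE}_N\|_{\text{op}} \geq h\bigr) \leq 2\,(3/\delta)^N \exp\Bigl(-\frac{N h^2 (1 - 2\delta)^2}{4}\Bigr).
\]

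\textbf{Step 3 (constants; the main obstacle).} The delicate part is matching the claimed constant $1/9$ together with the threshold $h \geq 8$. The natural choice $\delta = 1/4$ does not work: it gives exponent $h^2/16$, which is smaller than $h^2/9$. I would instead take $\delta = 1/20$. Then $(1 - 2\delta)^2/4 = 0.2025$, and the slack over $1/9$ is about $0.091$. It remains to check that $0.091\,h^2 N \geq N\log 60 + \log 2$ for all $h \geq 8$ and $N \geq 1$. Since $0.091 \cdot 64 \approx 5.8$ while $\log 60 + \log 2 \approx 4.8$, this holds and yields $\mathbb{P}(\|\text{GOE}_N\|_{\text{op}} \geq h) \leq e^{-Nh^2/9}$.

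If one prefers to avoid the arithmetic, an alternative route is Gaussian concentration. The map $\text{GOE}_N \mapsto \|\text{GOE}_N\|_{\text{op}}$ is Lipschitz with constant of order $N^{-1/2}$ in the underlying standard Gaussians, and $\mathbb{E}\|\text{GOE}_N\|_{\text{op}} \leq 2 + o(1)$. This gives a tail of the form $e^{-N(h - 3)^2/4}$, which dominates $e^{-Nh^2/9}$ once $h \geq 8$. This is essentially the argument of \cite{arous2001aging}, but it requires a uniform-in-$N$ expectation bound, so I would present the net argument as the primary proof.
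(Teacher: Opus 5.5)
Your net argument is correct and complete. The paper gives no proof of this lemma; it imports it from \cite{arous2001aging}. So your route differs from the paper's only in that you give a self-contained argument. It is the same $\delta$-net device the paper itself uses for $\nabla^2 H_N^J(\sigma)$ in the proof of Lemma~\ref{Proposition:Op_Bound}, and the only new ingredient is the bookkeeping of constants.

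The one-direction rate is $1/4$, from the variance $2/N$, which you computed correctly from the stated covariance. The target rate $1/9$ therefore leaves room to pay the net entropy $N\log(3/\delta)$, provided $\delta$ is small enough that $(1-2\delta)^2/4$ stays well above $1/9$; you correctly observe that $\delta=1/4$ fails. With $\delta=1/20$ the slack is $0.2025-1/9\approx 0.0914$. Since $0.0914\cdot 64\approx 5.85$ exceeds $\log 60+\log 2\approx 4.79$, the bound holds for every $N\ge 1$ (including $N=1$) and every $h\ge 8$, because the slack term increases in both $h$ and $N$. Compared with the citation, your approach gives transparency using only elementary tools (Gaussian tail, union bound, volumetric count) and the GOE covariance; the citation gives only brevity.

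Your secondary remark has an arithmetic slip. The inequality $e^{-N(h-3)^2/4}\le e^{-Nh^2/9}$ is equivalent to $3(h-3)\ge 2h$, i.e.\ $h\ge 9$. So a uniform mean bound of $3$ does not reach the threshold $h\ge 8$.

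It can be repaired as follows. Apply Sudakov--Fernique, comparing $u\mapsto\langle \text{GOE}_N u,u\rangle$ with $u\mapsto \frac{2}{\sqrt N}\langle g,u\rangle$. Their increment variances are $\frac{4}{N}(1-\langle u,v\rangle^2)$ and $\frac{8}{N}(1-\langle u,v\rangle)$ respectively, so $\mathbb{E}\lambda_{\max}(\text{GOE}_N)\le 2$ for every $N$. Since $\lambda_{\max}$ is $\sqrt{2/N}$-Lipschitz in the underlying standard Gaussians, concentration for $\lambda_{\max}(\pm\text{GOE}_N)$ gives $\mathbb{P}(\|\text{GOE}_N\|_{\text{op}}\ge h)\le 2e^{-N(h-2)^2/4}$. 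This is at most $e^{-Nh^2/9}$ for all $h\ge 8$ and $N\ge 1$.

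Since you present the net argument as the proof, this slip does not affect correctness.
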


\end{appendices}

\bibliographystyle{acm}

{\footnotesize\bibliography{reference}}
\end{document}